\newtheorem{thm}{Theorem}[section]
\newtheorem{cor}[thm]{Corollary}
\newtheorem{prop}[thm]{Proposition}
\newtheorem{lem}[thm]{Lemma}
\newtheorem{conj}[thm]{Conjecture}
\theoremstyle{definition}
\newtheorem{defn}[thm]{Definition}
\newtheorem*{rem}{Remark}
\newtheorem*{ex}{Example}
\newcommand*{\centernot}{\mathpalette\@centernot}
\def\@centernot#1#2{%
  \mathrel{%
    \rlap{%
      \settowidth\dimen@{$\m@th#1{#2}$}%
      \kern.5\dimen@
      \settowidth\dimen@{$\m@th#1=$}%
      \kern-.5\dimen@
      $\m@th#1\not$%
    }%
    {#2}%
  }%
}
\newcommand{\bbA}{\mathbb{A}}
\newcommand{\bbF}{\mathbb{F}}
\newcommand{\bbN}{\mathbb{N}}
\newcommand{\bbQ}{\mathbb{Q}}
\newcommand{\bbZ}{\mathbb{Z}}
\newcommand{\tup}[1]{\underline{#1}}  % Achtung: "underlined" steht auch im Text
\newcommand{\stup}[1]{\hat{\tup{#1}}}  % stup = short tuple
\newcommand{\atup}{\tup{a}}
\newcommand{\atiltup}{\tup{\tilde{a}}}
\newcommand{\ctup}{\tup{c}}
\newcommand{\vtup}{\tup{v}}
\newcommand{\xtup}{\tup{x}}
\newcommand{\ytup}{\tup{y}}\newcommand{\ystup}{\stup{y}}
\newcommand{\ztup}{\tup{z}}
\newcommand{\kappatup}{\tup{\kappa}}
\newcommand{\mutup}{\tup{\mu}}
\newcommand{\Atil}{\tilde{A}}
\newcommand{\ohne}{\setminus}
\newcommand{\del}{\partial}
\newcommand{\surject}{\twoheadrightarrow}
\newcommand{\inject}{\hookrightarrow}
\newcommand{\iso}{\mathrel{\overset{\sim}{\smash{\longrightarrow}\vrule height.3ex
width0ex\relax}}}
\newcommand{\mult}{^{\times}}
\newcommand{\auf}[1]{\mathord{\upharpoonright_{#1}}}
\newcommand{\ngg}{\centernot\gg}
\newcommand{\nll}{\centernot\ll}
\newcommand{\defiff}{\mathrel:\!\joinrel\iff}
\newcommand{\bifurc}{\mathcal{Y}}
\newcommand{\ccGamma}{\Xi}  % congruence class in Gamma
\newcommand{\skel}{\mathcal{S}} % The "small" skeleton of a tree
\newcommand{\fintree}{\mathcal{F}}  % Beginning of side branch
\newcommand{\tree}{\mathcal{T}}
\newcommand{\branch}{\mathcal{B}}  % Side branch
\newcommand{\Trees}{\{\mathrm{Trees}\}}
\newcommand{\joint}{\tilde{v}}
\newcommand{\bone}{\tilde{e}}
\newcommand{\ifu}{\ell}  % ifu = involved function
\newcommand{\sv}{Z}  % series-variable
\newcommand{\Pt}{\{\mathrm{Pt}\}} % one-point-set
\newcommand{\etwa}[1][\delta]{\approx_{#1}}
\newcommand{\ball}[1]{B(#1)}
\newcommand{\BX}{B_X}
\newcommand{\BY}{B_Y}
\newcommand{\conda}{\vartriangleleft}
\newcommand{\condb}{\mathrel{\rlap{\raisebox{-0.18ex}{$\vartriangleleft$}}\raisebox{0.18ex}{$\vartriangleleft$}}}
\newcommand{\phitree}{\phi_{\mathrm{tree}}}
\newcommand{\pow}{u}
\newcommand{\dmax}{d_{\mathrm{max}}}
\newcommand{\Nshift}{\tilde{N}}
\newcommand{\treeshift}{\tilde{\tree}}
\newcommand{\dQp}{\operatorname{dist}_{\Qp}} % distance to Q_p
\newcommand{\dats}{\mathcal{D}}
\newcommand{\GL}{\operatorname{GL}}
\newcommand{\im}{\operatorname{im}}
\newcommand{\Tr}{\operatorname{T}}
\newcommand{\Trtil}{\operatorname{\tilde{T}}}
\newcommand{\depth}{\operatorname{depth}}
\newcommand{\spec}{\operatorname{spec}}
\newcommand{\Zp}{\bbZ_p}
\newcommand{\Qp}{\bbQ_p}
\newcommand{\acl}[1]{\tilde{#1}}
\newcommand{\aclQp}{\acl{\bbQ}_p}
\newcommand{\aclZp}{\acl{\bbZ}_p}
\newcommand{\aclGamma}{\acl{\Gamma}}
\newcommand{\cl}[1]{\bar{#1}}
\newcommand{\clX}{\cl{X}}
\newcommand{\req}[1]{(\ref{eq:#1})}
\newcommand{\rit}[1]{(\ref{it:#1})}
\begin{document}

\title{Trees of definable sets over the $p$-adics}
\author{Immanuel Halupczok}
\thanks{The author was supported by the Fondation Sciences mathématiques
de Paris.}
\begin{abstract}
To a definable subset of $\Zp^n$ (or to a scheme of finite type over $\Zp$)
one can associate a tree in a natural way.
It is known that the corresponding Poincar\'e series
$\sum N_\lambda Z^\lambda \in \bbZ[[Z]]$ is rational,
where $N_\lambda$ is the number of nodes of the tree at depth $\lambda$.
This suggests that the trees themselves are far from arbitrary.
We state a conjectural, purely combinatorial description of the class of possible trees
and provide some evidence for it.
We verify that any tree in our class indeed arises from a definable set,
and we prove that the tree of a definable set (or of a scheme) lies in our class
in three special cases: under weak smoothness assumptions,
for definable subsets of $\Zp^2$, and for one-dimensional sets.
\end{abstract}

\maketitle

\section{Introduction and results}

Suppose that $X \subset \Qp^n$ is a definable set in the language of fields.
For $\lambda \ge 0$, let $X_\lambda$ be the image of $X \cap \Zp^n$
under the projection $\Zp^n \surject (\bbZ/p^\lambda\bbZ)^n$.
In \cite{Den:rat}, Denef proved that the associated Poincar\'e series
\[
P_X(\sv) := \sum_{\lambda = 0}^{\infty} \#X_\lambda \cdot \sv^\lambda
\in \bbZ[[\sv]]
\]
is a rational function in $\sv$. Now the disjoint union
$\Tr(X) := \bigcup_{\lambda \ge 0}X_\lambda$
carries a tree structure defined by the projections
$(\bbZ/p^{\lambda+1}\bbZ)^n \surject (\bbZ/p^\lambda\bbZ)^n$,
thus a natural question (which Loeser posed to me) is:
can the result of Denef be refined to a result about the structure of the trees?
In other words:
does there exist a purely combinatorial description of the structure of
trees which can arise from definable sets, which implies the above
rationality?

The goal of this article is to conjecturally give such a description
and to provide some evidence for it.
More precisely, for any $d \in \bbN$ we define a class of
``trees of level $d$''. Our conjecture is then:

\begin{conj}\label{conj:main}
Suppose that $X \subset \Qp^n$ is a definable set.
Then $\Tr(X)$ is a tree of level $\dim X$.
\end{conj}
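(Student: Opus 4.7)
The plan is to proceed by induction on $n$ and $\dim X$, using $p$-adic cell decomposition to reduce to cells whose tree structure is transparent. By Denef's cell decomposition, $X$ can be partitioned into finitely many cells, each expressible as a definable family of balls (or singletons) indexed by a lower-dimensional base. Provided the class of trees of level $d$ is closed under finite disjoint unions and under gluing a tree of level $<d$ onto a tree of level $d$ at a set of nodes, it suffices to verify the conjecture for a single cell.

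First I would treat the top-dimensional part. On a cell of full dimension $\dim X = d$, after discarding a bounded initial depth, each node should have exactly $p^d$ children, matching the expected branching of the tree of $\Zp^d$. This settles the ``generic'' part of $\Tr(X)$ and is almost certainly subsumed by the weak smoothness case advertised in the abstract.

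The induction step then has to handle what happens on the locus where the cell decomposition degenerates: where the centre functions coalesce, where angular-component conditions change, or where the base cell itself becomes singular. This locus has dimension strictly less than $d$, so the inductive hypothesis applies to it; the remaining task is to show that attaching the resulting lower-level subtrees at the correct nodes still produces a tree of level $d$. To globalise cleanly, I would instead pass to a resolution of singularities $\pi\colon \Atil \to X$ whose exceptional divisor has simple normal crossings, analyse $\Tr$ upstairs where the local picture is essentially $\Zp^d$ decorated by monomial coordinates, and then push forward along $\pi$.

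The main obstacle will be this push-forward step. Different points of $\Atil$ can map to the same point of $X$, so the tree upstairs projects onto $\Tr(X)$ by a finite-to-one but non-injective map whose fibres vary discontinuously in a way controlled only by the combinatorics of the exceptional divisor. Keeping track of how these fibres merge as one descends the tree, and verifying that the merged object still satisfies the rigid level-$d$ conditions, is presumably the reason why the paper establishes the conjecture only under smoothness hypotheses, in ambient dimension two (where $\pi$ is particularly tame), and in the one-dimensional case (where resolution is essentially trivial).
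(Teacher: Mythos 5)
This statement is a \emph{conjecture} in the paper, proved there only under smoothness hypotheses (Theorem~\ref{thm:mainSmooth}), for subsets of $\Qp^2$ (Theorem~\ref{thm:mainZp2}), and when $\dim X \le 1$ (Theorem~\ref{thm:main1dim}). A complete argument would have to close a gap that the paper leaves explicitly open; your proposal does not do so, as you essentially concede in your final paragraph.

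The concrete gap is in the gluing step. You assume that level-$d$ trees are closed under ``gluing a tree of level $<d$ onto a tree of level $d$ at a set of nodes.'' What the paper actually establishes (Lemma~\ref{lem:firstProp} \rit{glue}, and Lemma~\ref{lem:glueCheese} for finitely many holes of a cheese) is closure under attaching a single piecewise level-$d$ tree at a single real joint. The hard case is attaching side branches at the \emph{infinitely many} nodes along a bone converging to a singular point of $X$: the definition in Subsection~\ref{subsect:defTree} then requires these side branches to be produced by a single parametrized side-branch datum whose dependence on the depth $\kappa$ is through linear functions and congruence classes modulo a fixed $\rho$. Establishing that uniformity in $\kappa$ is the entire difficulty; it is precisely what the garland and Puiseux machinery (Lemma~\ref{lem:compact}, Proposition~\ref{prop:defPuiseux}, Proposition~\ref{prop:1dimParam}) is built to deliver, and it is why the proofs stop at dimensions where Puiseux applies, as the paper itself notes. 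Resolution of singularities gives you a clean local picture upstairs but by itself says nothing about linearity-in-$\kappa$ of the pushed-forward trees; and since $X$ is an arbitrary definable set rather than a variety, even on a smooth model you must track power-residue and angular-component conditions, which the paper handles through cell decomposition rather than resolution. Your plan thus genuinely diverges from the paper's, and it is the uniformity-in-depth requirement, not merely the combinatorics of exceptional fibres, that it fails to address.
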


Here, the dimension of a definable set $X$ is the algebraic
dimension of the Zariski closure of $X$ in the algebraic closure $\aclQp^n$;
see \cite{SD:dim}.

Whether the conjecture is interesting depends on how tight our definition
of trees of level $d$ is. In fact, we will show that it is as tight
as possible:

\begin{thm}\label{thm:inv}
For any tree $\tree$ of level $d$ without leaves, there exists a definable
set $X$ of dimension $\le d$ such that $\Tr(X) \cong \tree$.
\end{thm}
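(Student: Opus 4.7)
The plan is to build $X$ as a definable subset of some $\Zp^n$ by realizing $\tree$ inductively from simple definable building blocks, using that the class of trees of level $d$ is generated by a small stock of atomic trees and a few combinatorial operations.

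First, I would check that the obvious atomic trees are realized by obvious definable sets: the full $p^d$-branching tree (every node has $p^d$ children) is $\Tr(\Zp^d)$; a single infinite branch is $\Tr(\{a\})$ for $a \in \Zp$; any finite-depth configuration is realized by a finite union of balls in $\Zp$. Together these cover all leafless trees of level $0$ and supply the "thick'' ingredient needed at each level $d$.

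Next, I would show that the class of trees realizable by definable sets of dimension $\le d$ is closed under the operations used to generate trees of level $d$ from the atoms, at minimum disjoint union and subtree substitution (replacing the subtree below a node $v$ by a prescribed tree of level $\le d$). Both operations translate transparently to the definable side: a disjoint union of trees is realized by placing realizations of the pieces inside disjoint balls of $\Zp^n$ (after possibly enlarging $n$), while substitution at a node $v$ of depth $\lambda$ amounts to replacing the ball $v + p^\lambda\Zp^n$ by an appropriately translated and rescaled definable realization of the substituted subtree. Both operations preserve dimension. Combining these with the atoms and proceeding by induction on $d$ (or on the recursive description of the class) produces the required definable $X$.

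The main obstacle will be the dimension constraint. A tree whose $\lambda$-th level grows like $p^{d\lambda}$ forces the realizing set to have dimension at least $d$, so the "thick'' pieces are essentially forced to be translates or rescalings of $\Zp^d$ and nothing richer; one must check that the definition of trees of level $d$ never demands more thickness than this. Moreover, if the definition permits infinite coherent families of subtrees indexed by nodes (as it surely does, since this is what ultimately gives rationality of $P_X(\sv)$), such a family must be produced uniformly from a single definable set rather than by an ad hoc disjoint union over nodes. Organizing the inductive construction so that it simultaneously respects the dimension budget and the uniformity required by definability---for instance by always realizing a coherent family through a single parametrized ball of $\Zp^n$---is where the real work will lie.
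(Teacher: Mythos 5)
You have correctly identified the crux of the problem: the definition of a level $d$ tree attaches a side branch to \emph{every} node on every bone, and these infinitely many side branches must be realized coherently by a single definable set, not by ad hoc surgery at each node. But having named this difficulty you do not solve it, and your proposed framework --- a class of realizable trees closed under disjoint union and finite subtree substitution, starting from atoms like $\Tr(\Zp^d)$ and $\Tr(\Pt)$ --- does not actually generate the class of level $d$ trees, precisely because those two operations are finitary while the side-branch data is parametrized by the value group. So the ``real work'' you defer is not a routine bookkeeping step; it is essentially the entire content of the theorem.

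The paper's proof handles this by strengthening the induction to a parametrized statement: given a parametrized level $d$ tree $\tree\colon M\to\Trees$ with $M\subset\Gamma^m$, one builds a single definable $X\subset\Zp^{m+N}$ whose tree on the ball corresponding to a parameter $\kappatup\in M$ is $\tree(\kappatup)$, and whose fibers vary Lipschitz continuously so that Corollary~\ref{cor:contTree} can be applied. The parameters occupy the first $m$ coordinates of $\Zp^{m+N}$, encoded via the valuations $\vtup(\xtup)$; this is exactly the mechanism that turns ``infinitely many side branches, one per $\kappa$'' into one definable object. The other technical ingredient you would need --- and do not supply --- is Lemma~\ref{lem:invFkt}: a definable function $\pow_\ell$ whose valuation realizes an arbitrary prescribed linear function $\ell(\vtup(\xtup))$ while remaining Lipschitz on each rectangle $G_{\kappatup}$. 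These functions are what let one place the joints of the skeleton at the right depths, uniformly in $\kappatup$, so that the separating depths $v(f_i(\xtup)-f_j(\xtup))$ match the linear bone lengths in the tree datum. Without both of these ideas (the parametrized induction carried in extra coordinates, and the definable depth-realizing functions $\pow_\ell$), your outline cannot be completed.

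Two smaller inaccuracies. First, for the dimension bound you argue in the wrong direction: lower bounds on $\dim X$ are irrelevant; what one must track through the induction is the upper bound, and the paper gets $\dim X\le m+d$ by noting that the skeleton contributes dimension $\le m$ (graphs of finitely many functions of the $m$ parameter coordinates) and the side branches contribute $\le (m+1)+(d-1)$ by the inductive hypothesis with one extra parameter. Second, your ``subtree substitution'' operation is not how level $d$ trees are built: the recursion in the definition goes through side-branch data attached along a skeleton, not by replacing subtrees at prescribed nodes, and this difference is not cosmetic since it is exactly what forces the parametrized formulation.
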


The tree $\Tr(X)$ of a set never has leaves, so we might as well forbid
leaves in our definition of trees of level $d$; however,
for technical reasons it is better to allow them.

By Theorem~\ref{thm:inv}, our definition of level $d$ trees is clearly
precise enough to imply rationality of the Poincar\'e series.
However, we will also give an easy direct proof
in Proposition~\ref{prop:rat}.

\medskip

The main results of this article are proofs of the conjecture
in several special cases. Before stating these results,
let us consider an algebraic variant of the trees. For any scheme $V$ of
finite type over $\Zp$, we define a tree $\Tr(V)$ as follows:
the set of nodes at depth $\lambda$ is the image of the map
$V(\Zp) \to V(\bbZ/p^\lambda\bbZ)$,
and the tree structure is given by the maps
$V(\bbZ/p^{\lambda+1}\bbZ) \to V(\bbZ/p^\lambda\bbZ)$.
Using this, we can state an algebraic variant of the conjecture:

\begin{conj}\label{conj:alg}
Suppose that $V$ is a scheme of finite type over $\Zp$.
Then $\Tr(V)$ is a tree of level $\dim V$.
\end{conj}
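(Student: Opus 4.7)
The plan is to derive Conjecture~\ref{conj:alg} from Conjecture~\ref{conj:main}, concentrating all the genuine combinatorial difficulty in the model-theoretic case. Suppose first that $V$ is affine, realised as the closed $\Zp$-subscheme of $\bbA^n_{\Zp}$ cut out by polynomials $f_1,\ldots,f_m \in \Zp[x_1,\ldots,x_n]$. Then $V(\Zp)$ is literally the definable set $\{x \in \Zp^n \mid f_1(x) = \cdots = f_m(x) = 0\}$, and the image of $V(\Zp) \to V(\bbZ/p^\lambda\bbZ)$ sits naturally inside $(\bbZ/p^\lambda\bbZ)^n$ as the reduction of $V(\Zp)$ modulo $p^\lambda$. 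Consequently $\Tr(V) \cong \Tr(V(\Zp))$ as trees. By Conjecture~\ref{conj:main} the latter is a tree of level $\dim V(\Zp)$, and since the Zariski closure of $V(\Zp)$ in $\aclQp^n$ is contained in $V(\aclQp)$ one has $\dim V(\Zp) \le \dim V$; so we are done provided that the class of level-$d$ trees is upward-closed in $d$ --- which one expects to be part of the definition.

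For a general scheme $V$ of finite type over $\Zp$, choose a finite affine open cover $V = \bigcup_{i=1}^k U_i$. Because $\operatorname{Spec}(\Zp)$ is local, every $\Zp$-point of $V$ factors through some $U_i$; therefore $V(\Zp) = \bigcup_i U_i(\Zp)$, and $\Tr(V)_\lambda$ is the image in $V(\bbZ/p^\lambda\bbZ)$ of $\bigsqcup_i \Tr(U_i)_\lambda$ under the gluing maps. Each $\Tr(U_i)$ is a tree of level $\le \dim V$ by the affine case, so it remains to check that the class of level-$d$ trees is preserved under the ``surjective image from a finite disjoint union'' operation induced by such a cover --- a routine combinatorial verification from the definition, independent of the deep model-theoretic input.

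The real obstacle is Conjecture~\ref{conj:main} itself. A plausible attack would use Denef's cell decomposition~\cite{Den:rat} (or a finer analytic variant) to partition an arbitrary definable $X \subset \Qp^n$ into finitely many cells with explicit branching behaviour, and then show that (i)~each individual cell yields a tree of level equal to its $p$-adic dimension, and (ii)~trees of level $d$ are closed under the gluing operations needed to reassemble $\Tr(X)$ from the cells. The hardest ingredient is~(i) for cells of mixed dimensional type: controlling the full tree, rather than just the Poincar\'e series $P_X(\sv)$, means keeping uniform track of how the number of $(\lambda+1)$-children above a node depends on the node's position in the tree --- exactly the fine information that Denef's original proof discards once the totals $\#X_\lambda$ have been summed.
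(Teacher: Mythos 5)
Your reduction of Conjecture~\ref{conj:alg} to Conjecture~\ref{conj:main} via the affine case and an affine open cover is exactly the strategy the paper carries out in Lemma~\ref{lem:affin} and Proposition~\ref{prop:affin}. The affine step is right: for $V$ closed in $\bbA^n_{\Zp}$ one has $\Tr(V)\cong\Tr(V(\Qp))$, and then Conjecture~\ref{conj:main} plus the upward-closure Lemma~\ref{lem:firstProp}~\rit{m+1} finish. You also correctly pinpoint the key algebro-geometric fact: since $\spec\Zp$ is local, every $\Zp$-point of $V$ factors through some affine open $U_i$, so $V(\Zp)=\bigcup_i U_i(\Zp)$.

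The gap is in the gluing step. You reduce to the assertion that level-$d$ trees are ``preserved under the surjective image from a finite disjoint union operation induced by such a cover'' and call this a routine combinatorial verification. Stated at that level of generality this is not a closure property of level-$d$ trees at all: a depth- and edge-preserving surjection from $\bigsqcup_i\Tr(U_i)$ onto $\tree$ identifies a downward-closed initial subtree of $\tree$, and, read inside a single $\Tr(U_i)$, that identified part is an arbitrary rooted subtree --- not a subtree hanging below a node --- so nothing in the definition controls its level. What saves the scheme situation is a stronger, specific fact (the paper's Lemma~\ref{lem:affin}), obtained from the very locality of $\spec\Zp$ you already invoked, applied one step more carefully: if $v$ is a child of the root of $\Tr(V)$ whose underlying $\bbF_p$-point lies in $U_i$, then \emph{every} $\Zp$-point lifting $v$ has image contained in $U_i$ (because $U_i$ is open, hence stable under generalization), so the whole subtree of $\Tr(V)$ below $v$ is \emph{isomorphic} to the subtree of $\Tr(U_i)$ below the corresponding child of its root --- not merely a quotient of it. With that in hand one is only gluing genuine subtrees of level-$d$ trees at the root, which is exactly what Lemma~\ref{lem:subTree} and Lemma~\ref{lem:firstProp}~\rit{glue} handle. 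So the missing ingredient is not a new combinatorial closure property but the observation that the gluing over the cover is trivial below depth one.
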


If $V$ is an affine embedded scheme (in $\bbA^n$, say), then
we have $V(\Qp) \subset \Qp^n$, and
the two definitions yield the same tree: $\Tr(V) \cong
\Tr(V(\Qp))$. Once the definition of a level $d$
tree is given, it will be easy to verify that if the conjecture
holds for each set of a finite cover of $V$, then it also holds
for $V$ itself (Proposition~\ref{prop:affin}); thus
Conjecture~\ref{conj:main} implies Conjecture~\ref{conj:alg}.
Therefore in most of the article we will stick to the affine
case and the first definition of trees.

From an algebraic point of view, it seems more natural
to consider a tree $\Trtil(V)$ whose set of nodes at depth $\lambda$ is the
whole set $V(\bbZ/p^\lambda\bbZ)$, and not only
the image of $V(\Zp)$. Indeed, the Poincar\'e series
\begin{equation}\label{eq:algTree}
\sum_{\lambda = 0}^{\infty} \#V(\bbZ/p^\lambda\bbZ) \cdot \sv^\lambda
\end{equation}
is rational,
too, and at the end of this article, we will describe
a variant of the conjecture which includes both kinds of trees
(and much more). However, for now let us stick to the trees $\Tr(V)$.

\medskip

We now present the cases in which we will prove the conjecture.
The first one is not very difficult to prove.
Under rather weak smoothness assumptions, the tree of a scheme
is particularly simple.

\begin{thm}\label{thm:mainSmooth}
Suppose that $V$ is a scheme of finite type over $\Zp$, and
suppose that for every $\Zp$-valued point $x\colon \spec\Zp \to V$,
$V$ is smooth at $x(\eta)$, where $\eta$ is the generic point of $\spec\Zp$.
Then $\Tr(V)$ consists of a finite tree, with copies of $\Tr(\Zp^d)$,
$d \le \dim V$ attached to its leaves ($d$ may depend on the leaf).
In particular, $\Tr(V)$ is a tree of level $\dim V$.
\end{thm}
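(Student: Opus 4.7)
The plan is to combine the $p$-adic implicit function theorem with compactness of $V(\Zp)$. By Proposition~\ref{prop:affin}, I can assume $V \subset \bbA^n_{\Zp}$ is affine, so that $\Tr(V) = \Tr(X)$ with $X := V(\Zp) \subset \Zp^n$ closed, hence compact.

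Fix $x \in X$. The hypothesis says that $V_{\Qp}$ is smooth at $x$, of some dimension $d_x \le \dim V$. The $p$-adic implicit function theorem (equivalently, Hensel's lemma applied via Newton iteration) supplies $d_x$ coordinate directions and an integer $\lambda_x$ such that the projection $\pi_x \colon B_x \cap X \to \Zp^{d_x}$ onto these coordinates, with $B_x := x + p^{\lambda_x}\Zp^n$, is a bijection onto a ball of the same radius, given locally by convergent power series whose Jacobian has unit determinant. After enlarging $\lambda_x$ if necessary, this bijection becomes an isometry: $y \equiv y' \pmod{p^\lambda}$ iff $\pi_x(y) \equiv \pi_x(y') \pmod{p^\lambda}$ for all $\lambda \ge \lambda_x$ and all $y, y' \in B_x \cap X$. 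In particular, the subtree of $\Tr(X)$ hanging below any node at depth $\lambda_x$ whose preimage in $X$ meets $B_x$ is canonically identified with $\Tr(\Zp^{d_x})$.

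By compactness, finitely many balls $B_{x_1}, \ldots, B_{x_k}$ cover $X$. Setting $\lambda_0 := \max_i \lambda_{x_i}$, the truncation of $\Tr(X)$ at depth $\lambda_0$ is finite, and each node at depth $\lambda_0$ lies in some $B_{x_i}$, so that the subtree below it is isomorphic to $\Tr(\Zp^{d_{x_i}})$ for some $d_{x_i} \le \dim V$. This gives the structural description in the statement. That such a tree qualifies as a tree of level $\dim V$ should fall straight out of the definition: $\Tr(\Zp^d)$ is of level $d$, and prepending a finite initial segment preserves level.

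The main obstacle is the uniform isometry property of $\pi_x$: one needs to shrink the ball until the Jacobian has unit determinant throughout, so that $\pi_x$ not merely identifies points but preserves the $p$-adic distance at every depth past $\lambda_x$. Once this control is secured, the rest of the argument is essentially the compactness bookkeeping described above together with an appeal to the defining properties of level $d$ trees.
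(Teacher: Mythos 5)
Your overall plan matches the paper's: reduce to the affine case by Proposition~\ref{prop:affin}, apply the $p$-adic implicit function theorem at each point to show the set is locally a graph, deduce that the tree on a small ball is $\Tr(\Zp^{d_x})$, and finish by compactness and Lemma~\ref{lem:firstProp}. The paper realizes the "local graph gives $\Tr(\Zp^d)$" step by first acting by a $\GL_n(\Zp)$ matrix (isometric, hence tree-preserving by Lemma~\ref{lem:treeIso}) obtained from the Smith normal form, so that the implicit function $\atup$ has \emph{vanishing} derivative at the center; then the singleton fiber varies Lipschitz continuously and Corollary~\ref{cor:contTree} gives $\Tr_B(V(\Qp)) \cong \Tr(\Zp^d)$. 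You instead pick $d_x$ coordinate directions directly and assert the resulting projection is an isometry. These are two ways to put the tangent space in good position, and both work, but yours requires an observation you leave implicit: that $d_x$ coordinates can always be chosen so that the projection restricts to a $\Zp$-module isomorphism on the tangent module $T_x \cap \Zp^n$. This holds precisely because $T_x \cap \Zp^n$ is a direct summand of $\Zp^n$ and $\Zp$ is local (the $d_x \times d_x$ minors generate the unit ideal, and a finitely generated ideal of a local ring is the unit ideal iff one generator is a unit); it deserves a sentence of justification.

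There is a genuine imprecision in the crucial step. The condition you repeatedly invoke, that the Jacobian have ``unit determinant,'' is not what makes $\pi_x$ an isometry, and in fact for the natural extension $(\xtup,\ytup) \mapsto (\xtup, \ytup + \atup(\xtup))$ the Jacobian is block-triangular with determinant $1$ no matter what $\atup$ is, so the determinant condition is vacuous. A matrix with determinant in $\Zp^\times$ need not act isometrically on $\Zp^n$ (e.g.\ $\mathrm{diag}(p, p^{-1})$). What you actually need is that the derivative $D\atup$ has entries in $\Zp$ on the ball---equivalently, that the Jacobian of the change of coordinates lies in $\GL_n(\Zp)$, not merely that its determinant is a unit. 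This is exactly the content of the inequality $v(\atup(\xtup)-\atup(\xtup')) \ge v(\xtup-\xtup')$ in the paper, obtained there by arranging $D\atup(0)=0$ and shrinking the ball; in your version it is what the coordinate-selection fact above gives you (integral derivative at the center point), after shrinking the ball so the higher-order terms of $\atup$ do not spoil the estimate. If you replace ``unit Jacobian determinant'' by ``$D\atup$ integral on the ball,'' and add the local-ring argument for why suitable coordinates exist, the proof is complete and essentially that of the paper.
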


More generally, if $V$ is a non-smooth scheme, then the tree still looks
like $\Tr(\Zp^d)$ close to any smooth point.
On the other hand, we will see on an example (Subsection~\ref{subsect:cusp})
that close to singular points, the trees do get complicated.
(In fact trees of definable sets are not essentially more complicated than
trees of varieties.)
Thus the information contained in a tree of a scheme describes its
singularities; this should be closely related to the structure of arc
spaces above singularities, as studied in \cite{Nas:arc}.

The more interesting cases of the main conjecture
which we will verify are the following.

\begin{thm}\label{thm:mainZp2}
Conjecture~\ref{conj:main} holds if $X \subset \Qp^2$.
\end{thm}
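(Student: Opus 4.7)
The plan is to combine $p$-adic cell decomposition with a case analysis on cell dimension and a general gluing lemma for the class of level-$d$ trees.

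First I would invoke Denef's cell decomposition theorem to write $X$ as a finite disjoint union of cells $C_1, \dots, C_k \subset \Qp^2$, each of a standard form depending on its dimension. Next I would establish a gluing lemma: the class of level-$d$ trees is closed under suitable finite disjoint unions, so that it suffices to verify $\Tr(C_i)$ is a tree of level $\dim C_i$ for every $i$, with the level bound for $\Tr(X)$ then equal to $\max_i \dim C_i = \dim X$.

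For $0$-dimensional cells the trees are finite, hence of level $0$. For $2$-dimensional cells the cell is, after a coordinate change, an open box defined by valuation inequalities on both coordinates -- essentially a twisted open subset of $\Zp^2$; the argument behind Theorem~\ref{thm:mainSmooth} then identifies $\Tr(C_i)$, past some depth, as a finite union of copies of $\Tr(\Zp^2)$, hence of level $2$. The substantive work lies in dimension $1$: such a cell is, possibly after swapping coordinates, the graph of a definable function $c\colon A \to \Qp$ for $A \subset \Qp$ a $1$-cell. Here I would refine the cell decomposition of $A$ so that on each sub-cell $c$ admits an approximate monomial description $c(x_1) \approx a + b(x_1 - \alpha)^{\pow}$ for some $a, b, \alpha \in \Qp$ and $\pow \in \bbN$. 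The tree of such a graph is then built from a finite tree decorated at its leaves with standard subtrees related to $\Tr(\Zp)$, attached through branches whose branching pattern is governed by the exponent $\pow$.

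The main obstacle is handling the power-map ramifications appearing in the monomial description. The image of $\Zp$ under $x \mapsto x^{\pow}$ is not $\Zp$, and its tree has non-uniform branching concentrated near $0$, where the map ramifies; fitting this into the level-$1$ framework requires the combinatorial definition of level-$d$ trees to accommodate monomial ramification as a basic operation. Presumably the author's definition of level-$d$ trees is designed precisely for this, so that once the approximate monomial form is in hand, verifying the level-$1$ property for each $1$-cell reduces to careful combinatorial bookkeeping of branching depths and nodal counts at each ramification point, after which the theorem follows by the gluing lemma.
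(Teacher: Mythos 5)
Your proposal hinges on a ``gluing lemma'' to the effect that if each cell $C_i$ in a cell decomposition of $X$ has $\Tr(C_i)$ of level $\dim C_i$, then $\Tr(X)$ has level $\max_i \dim C_i$. This is the central gap, and it is not a minor one: the tree $\Tr(\bigcup_i C_i)$ is \emph{not} determined by the individual abstract trees $\Tr(C_i)$. A node of $\Tr(X)$ is a ball $B$ with $B \cap X \ne \emptyset$; such a $B$ may meet several cells at once, so $\Tr(X)$ is a kind of \emph{merge} of the $\Tr(C_i)$ whose structure depends entirely on how the cells sit relative to one another inside $\Qp^2$. (Already if each $C_i$ is a single point, every $\Tr(C_i)$ is the trivial infinite path $\Tr(\Pt)$, yet $\Tr(\bigcup_i C_i)$ encodes all the pairwise valuations $v(c_i - c_j)$.) Establishing such a gluing statement directly would require controlling exactly these relative positions---e.g.\ tracking $v(c_i(x) - c_j(x))$ for cell centers---which is precisely the technical heart of the paper's proof. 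The gluing lemmas the paper actually has (Lemma~\ref{lem:firstProp}\rit{glue} and Lemma~\ref{lem:glueCheese}) are much more restrictive: they attach a tree at a specified joint of another, or assemble a cheese from holes whose trees are already identified as joints; neither supports a blanket ``union over cells.''

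Two further points. First, your treatment of $2$-dimensional cells is too optimistic: such a cell in $\Qp^2$ has the form $\{(x,y): \alpha(x) \conda v(y - c(x)) \condb \beta(x),\ \exists z\, y - c(x) = rz^e\}$ with a definable \emph{center} $c$, and the tree near points on or near the graph of $c$ can be genuinely complicated; it is not a ``twisted open box,'' and Theorem~\ref{thm:mainSmooth} does not apply. Second, the paper's proof is organized quite differently from yours: rather than decomposing $X$ into cells globally and arguing cell by cell, it decomposes \emph{locally} using garlands (Lemma~\ref{lem:compact}), applies Corollary~\ref{cor:contTree} on each garland component to reduce to the trees of vertical fibers, and then proves a \emph{parametrized, uniform-in-$\kappa$} version of the $1$-dimensional result (Proposition~\ref{prop:1dimParam}, applied via Corollary~\ref{cor:1dimParam}), with Puiseux expansions of the cell centers supplying the required uniformity. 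The cell decomposition is used, but it is applied to the fibers after localizing, not as a top-level partition of $X$. Your one-dimensional sketch (monomial approximation of the center) points in the right direction---it is essentially a coarse version of the Puiseux analysis---but without localizing to garlands, proving the uniformity in depth, and solving the merging problem, the argument does not go through.
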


\begin{thm}\label{thm:main1dim}
Conjecture~\ref{conj:main} holds if $\dim X \le 1$.
\end{thm}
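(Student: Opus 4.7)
Since $\Tr(X)$ depends only on $X \cap \Zp^n$, I may assume $X \subset \Zp^n$. The strategy is to use $p$-adic cell decomposition to reduce to understanding the tree of a definable subset of $\Zp$ and then transport it through a well-behaved parametrization.

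First, $p$-adic cell decomposition (Denef, Cluckers) expresses $X$ as a finite disjoint union of cells $X_1, \dots, X_m$, each of dimension $0$ or $1$. Granted the basic closure of the class of level-$1$ trees under finite disjoint unions---a property which ought to be part of the general theory set up earlier in the paper---it suffices to handle each $X_i$. The $0$-dimensional cells are finite sets, whose trees consist of finitely many eventually linear branches and are trivially of level $0$. For a $1$-dimensional cell $X_i$, the cell structure provides, possibly after re-parametrization of the cell coordinates, a continuous definable parametrization $f \colon C \to \Zp^n$ with image $X_i$, where $C \subset \Zp$ is a definable subset.

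After further refinement, one arranges that on each piece $f$ is injective and valuatively monomial in the sense that $v(f(t) - f(t')) = v(t - t') + e$ for a fixed $e \ge 0$; this is the standard valuative Jacobian control for $p$-adic definable functions. Such an $f$ is bi-Lipschitz onto its image, so $\Tr(X_i)$ is isomorphic, after shifting by $e$ levels, to $\Tr(C)$. One is thus reduced to proving that $\Tr(C)$ is a tree of level $1$ for every definable $C \subset \Zp$. By Macintyre's quantifier elimination, $C$ is a finite Boolean combination of conditions of the form $v(x - a) \in S$ (with $S \subset \bbZ$ a finite union of arithmetic progressions) and $n$-th power class conditions $P_n(x - a)$. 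The tree of each such basic condition is an explicit, highly self-similar tree which the definition of level $1$ is surely tailored to accommodate, and finite Boolean combinations are then absorbed using closure under finite disjoint unions together with closure under removing subtrees.

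The hard part is the monomial-parametrization step. One has to separate the finite set of ``fold points'' of $f$ (where local injectivity fails) into lower-dimensional subcells, treated independently, and then verify on the complement that the relation $v(f(t) - f(t')) = v(t - t') + e$ really holds uniformly. This rests on the analytic structure of definable functions $\Qp \to \Qp^n$---that their Jacobian has a finite critical locus and that away from it they admit a dominant monomial term---and is the technical heart of the proof. Once this is in place, the combinatorial identification of $\Tr(X_i)$ with a shift of $\Tr(C)$ is essentially bookkeeping, and everything assembles via the closure properties of the class of level-$1$ trees.
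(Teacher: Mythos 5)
Your proposal follows a genuinely different route from the paper, and it contains two serious gaps that the paper's garland machinery is specifically designed to avoid.

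\textbf{Gap 1: closure under unions.}
You decompose $X$ into cells $X_1,\dots,X_m$ and then invoke ``closure of the class of level-$1$ trees under finite disjoint unions'' to handle the cells separately. The paper never establishes such a closure property, and it is far from obvious: if two cells $X_i, X_j$ accumulate to a common point of $\clX$, the subtrees $\Tr(X_i)$ and $\Tr(X_j)$ of $\Tr(\Zp^n)$ interlock nontrivially and their union is not simply a gluing at joints (Lemma~\ref{lem:glueCheese} only permits attaching at joints of a cheese, not arbitrary overlapping unions). Proving general union closure would essentially amount to re-proving the theorem. The paper sidesteps this by never separating the cells: Lemma~\ref{lem:compact} reduces everything to analyzing $\Tr_G(X)$ on garlands $G$ converging to each point, and on any such garland \emph{all} cells passing through are treated simultaneously. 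The same objection applies to your invocation of ``closure under removing subtrees'' for Boolean combinations.

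\textbf{Gap 2: the valuative monomiality claim is false.}
You claim that after finite definable refinement one can arrange $v(f(t)-f(t'))=v(t-t')+e$ with a \emph{fixed} $e$ on each piece. This fails already for the simplest singular example: parametrize the cusp by $f(t)=(t^2,t^3)$. Then $v(f(t)-f(t'))-v(t-t')=\min\{v(t+t'),\,v(t^2+tt'+t'^2)\}$, which for $t,t'$ in a small ball around a point of valuation $\kappa$ is roughly $\kappa$, hence grows without bound as $t,t'\to 0$. A fixed $e$ would require partitioning into infinitely many pieces indexed by $v(t)$, which is not a finite definable partition. The paper's whole point is that the shift \emph{must} be allowed to vary with the depth $\kappa$, and that this variation is controlled: on a garland with components $G_\kappa$, the discrepancy is \emph{linear in $\kappa$}, proven via Puiseux expansions of the cell centers (Proposition~\ref{prop:defPuiseux}, Corollary~\ref{cor:defPuiseux}, Lemma~\ref{lem:series}\,(2)). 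This is precisely why the definition of a level $d$ tree is phrased with parametrized side branches whose data depends on a depth variable; your proposal does not touch this structure at all.

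For comparison, the paper's actual argument is: reduce via Lemma~\ref{lem:compact} to trees on garlands converging to a point; project to the coordinate with unit valuation; use Corollary~\ref{cor:contTree} and Lemma~\ref{lem:series}\,(2) to get $\Tr_{G_\kappa}(X)\cong\Tr(\Zp)\times\Tr_{B_\kappa}(X_{x_\kappa})$ for a single fiber $X_{x_\kappa}$; then control the finitely many cell centers $\ctup(x_\kappa)$ of $X_{x_\kappa}$ by Puiseux series to show $\kappa\mapsto\Tr_{B_\kappa}(X_{x_\kappa})$ is piecewise of level $0$. The $0$-dimensional case is dispatched exactly as you say.
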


The present proofs of these results crucially rely on the theorem of
Puiseux, which is valid only for curves.
Thus to generalize them to higher dimension, one will need some new ideas.

Let me mention one more reason for which the trees seem interesting to me.
Suppose $X_1$ and $X_2$ are two definable subsets of $\Zp^n$ which are
closed in $p$-adic topology. Then isometric bijections between
$X_1$ and $X_2$ correspond exactly to isomorphisms of the corresponding trees
(see Lemma~\ref{lem:treeIso}). Thus one can interpret trees as a step
towards classification of definable sets up to isometry.
Indeed, if the main conjecture is true, then up to $p$-adic closure
any definable set is isometric to a set of the form constructed
in the proof of Theorem~\ref{thm:inv}.

\medskip

The remainder of this article is organized as follows.

In the next section, we fix our notation.

In Section~\ref{sect:bsp}, we compute the first trees:
we prove Theorem~\ref{thm:mainSmooth} and we give an example
of a tree of a singular curve.
To be able to do that, we first prove a key lemma
(Corollary~\ref{cor:contTree}) which relates the tree of a set to
the trees of its fibers.

The trees of Section~\ref{sect:bsp} give an idea of
how level $d$ trees should look like; in Section~\ref{sect:defTree},
we will actually define them.
We will give two versions of the definition:
a restrictive one and a more relaxed one; then we will show
that both are equivalent. At the end of this section,
we will verify some first properties of level $d$ trees.

In Section~\ref{sect:results}, we will prove statements about
given trees of level $d$, namely Theorem~\ref{thm:inv} and
the rationality of the Poincar\'e series of such a tree.

Section~\ref{sect:proofs} is devoted to the proof of the main conjecture
for subsets of $\Qp^2$ and for one-dimensional sets.
The section starts with a sketch of the principal ideas; then we
introduce the main tools we need, namely cell decomposition and
a way to understand definable functions on small balls.
In Subsection~\ref{subsect:1dimParam}, we 
prove a parametrized version of the conjecture for subsets of $\Qp$,
and finally we finish the actual proofs.

To conclude, we will present some
possible generalizations of the conjecture
in Section~\ref{sect:open}.

\section{Notation}

\subsection{Notation concerning model theory and \texorpdfstring{$\Qp$}{\041\032p}}

We fix a prime $p$ once and for all and work in $\Qp$.
We will use a two-sorted language, with one sort for $\Qp$ and one
for the valuation group $\Gamma$. As usual, we take the ring language
on $\Qp$, the ordered group language on $\Gamma$ and a valuation map $v\colon
\Qp \to \Gamma \cup \{\infty\}$. Note that
$\Gamma$ and $v$ are interpretable in the pure field language
(see e.g.\ \cite{Den:cell}, Lemma~2.1), so using the two-sorted language
is not really different from using the pure field language.

By ``definable'' we will always mean definable with parameters.

We will sometimes identify $\Gamma$ with $\bbZ$. In particular, we will
write $1$ for the valuation of $p$, and we will
often use the cross section $\Gamma \to \Qp\mult, \lambda \mapsto p^\lambda$.
%(In fact, it should be possible to replace
%$\lambda \mapsto p^\lambda$ by any other cross section.)

For $X \subset \Qp^n$, we denote by $\clX$ the closure of $X$ in the $p$-adic
topology.

For $\xtup = (x_1, \dots, x_n) \in \Qp^n$ and $\lambda \in \Gamma$, $\ball{\xtup,\lambda}
:= \xtup + p^\lambda \Zp^n$
denotes the ball around $\xtup$ of ``radius'' $\lambda$.
Moreover,
$v(\xtup) := \min\{v(x_i)\mid 1\le i \le \ell\}$ is the
minimum of the valuations of the coordinates. (In other words:
$v(\xtup) \ge \lambda \iff \xtup \in \ball{0,\lambda}$.)
Note that for us a ball always has the same radius in each coordinate.

The following non-standard notation will be very handy:
\begin{defn}
For $\delta \in \Gamma_{>0}$ and $x, x' \in \Qp\mult$,
we write $x\etwa x'$ if $x$ and $x'$ have the same image under the canonical
homomorphism $\Qp\mult \surject \Qp\mult/\ball{1,\delta}$. Equivalently,
\[
x\etwa x' \mathrel{:\joinrel\iff} v(x-x') \ge v(x) + \delta
.
\]
\end{defn}

Occasionally, we will work in the algebraic closure $\aclQp$ of $\Qp$.
Write $\aclZp$ for the valuation ring and $\aclGamma$
for the value group of $\aclQp$. The definitions of $v(\xtup)$ and $x\etwa x'$
also make sense in this context. $1 \in \aclGamma$ will still denote
the valuation of $p$.

Let $e \in \bbN_{\ge1}$.
The \emph{$e$-th power residue} of $x \in \Qp\mult$ is the
set $\{y^e\cdot x \mid y \in \Qp\mult\}$.
The following statements are well known (and not difficult to prove):

\begin{lem}\label{lem:root}
Suppose $e \in \bbN_{\ge1}$.
\begin{enumerate}
\item
If $\delta \ge v(e) + 1$,
then the map $z \mapsto z^e$ induces a bijection $1+p^{\delta}\Zp \to 1+p^{\delta+v(e)}\Zp$.
\item
If $x_1, x_2\in \Qp$ satisfy $x_1 \etwa[2v(e)+1] x_2$, then $x_1$ and $x_2$ have the same
$e$-th power residue.
\item
There are only finitely many different $e$-th power residues.
\end{enumerate}
\end{lem}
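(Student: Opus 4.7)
\textbf{Plan for the proof of Lemma~\ref{lem:root}.}

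My plan is to treat part (1) as the main ingredient and deduce (2) and (3) from it. For (1), the key observation is the factorization
\[
z_1^e - z_2^e = (z_1 - z_2)\sum_{i=0}^{e-1} z_1^i\,z_2^{e-1-i}.
\]
For $z_1, z_2 \in 1 + p^\delta \Zp$, each summand on the right is congruent to $1$ modulo $p^\delta$, so the sum is congruent to $e$ modulo $p^\delta$. Because $\delta \ge v(e)+1 > v(e)$, this forces the sum to have valuation exactly $v(e)$. Hence $v(z_1^e - z_2^e) = v(z_1-z_2) + v(e)$. Specializing $z_2 = 1$ shows that the image of $z \mapsto z^e$ lies in $1 + p^{\delta+v(e)}\Zp$, and injectivity is immediate. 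For surjectivity I would run a standard Hensel/Newton iteration: given $y \in 1 + p^{\delta+v(e)}\Zp$, inductively build $z_n \in 1+p^\delta\Zp$ with $v(z_n^e - y)$ strictly increasing, using the isometry-up-to-$v(e)$ property above to correct at each step; the limit $z = \lim z_n$ exists by completeness and satisfies $z^e = y$.

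Part (2) follows at once: if $x_1 \etwa[2v(e)+1] x_2$, then $u := x_1/x_2 \in 1 + p^{2v(e)+1}\Zp$, and applying (1) with $\delta = v(e)+1$ produces $w \in 1 + p^{v(e)+1}\Zp$ with $w^e = u$. Hence $x_1 = w^e x_2$, so $x_1$ and $x_2$ lie in the same $e$-th power class.

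For (3), part (2) shows that the $e$-th power residue of $x \in \Qp\mult$ is determined by the pair consisting of $v(x) \bmod e$ and the image of $x/p^{v(x)}$ in the finite quotient $\Zp\mult/(1+p^{2v(e)+1}\Zp)$. Both data take only finitely many values, so there are only finitely many residues.

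\textbf{Main obstacle.} Nothing here is deep; the only place requiring care is the surjectivity in (1), which is a Hensel/contraction-mapping argument where one must check that the hypothesis $\delta \ge v(e)+1$ (rather than just $\delta > 0$) gives strict contraction so that the Newton iteration converges uniformly. In particular this bound is what handles the wild prime $p \mid e$ uniformly, including $p=2$; once (1) is set up with the right constants, (2) and (3) are essentially formal consequences.
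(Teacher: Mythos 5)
The paper states this lemma as ``well known (and not difficult to prove)'' and omits the proof entirely, so there is nothing in the paper to compare against. Your argument is correct: the identity $z_1^e - z_2^e = (z_1-z_2)\sum_{i=0}^{e-1} z_1^i z_2^{e-1-i}$ plus $\delta > v(e)$ gives $v(z_1^e-z_2^e)=v(z_1-z_2)+v(e)$ on $1+p^\delta\Zp$, which yields well-definedness, injectivity, and the contraction estimate driving the Newton/Hensel iteration for surjectivity; (2) is then immediate, and for (3) you additionally use (implicitly, and trivially) that $p^{ke}$ is an $e$-th power to reduce to the unit part, after which finiteness of $\Zp^{\times}/(1+p^{2v(e)+1}\Zp)$ finishes the count.
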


% \begin{lem}\label{lem:powCont}
% If $x_1, x_2 \in \aclQp$ satisfy $x_1 \etwa x_2$ (for some $\delta  > 0$)
% then $x_1^i \etwa y_2^i$ for all $i \in \bbN$.
% \end{lem}
% 
% \BLAU{%
% We have $\frac{x}{y} \in 1 + \ball{0, \delta}$,
% which implies $(\frac{x}{y})^i \in 1 + \ball{0, \delta}$.}

\subsection{Model theory of \texorpdfstring{$\Gamma$}{\003\223}}

Let $M$ be a subset of $\Gamma^m$. A function $\ifu\colon M \to\Gamma$
is called \emph{linear} if there exist $a_1, \dots, a_m, b \in \bbQ$ such that
$\ifu(\kappa_1, \dots, \kappa_m) = a_1\kappa_1 + \dots a_m\kappa_m + b$
for all $(\kappa_1, \dots, \kappa_m) \in M$. A function $M \to\Gamma \cup \{\infty\}$
is called \emph{linear} if it is either a linear function to $\Gamma$ or constant $\infty$.
We will use the partial order on the functions $M \to\Gamma \cup \{\infty\}$
defined by $\ifu \le \ifu' \defiff \ifu(\kappatup) \le \ifu'(\kappatup)$
for all $\kappatup \in M$.

It is well known that any subset $M \subset \Gamma^{m}$ which is
definable in our
two-sorted structure is already definable in $(\Gamma, 0, +, <)$.
We will use the cell decomposition theorem for that structure (see
e.g.\ \cite{Clu:cell}, Theorem~1) to get hold of definable subsets of
$\Gamma^m$.
To avoid the rather lengthy definition of cells, we only state an
immediate consequence of the cell decomposition theorem.

\begin{lem}\label{lem:cellGamma}
\begin{enumerate}
\item\label{it:linGamma}
For any definable $M \subset \Gamma^{m}$ and any definable
function $\ifu\colon M \to \Gamma$,
there exists a finite partition of $M$ into definable subsets $M'$
such that $\ifu$ is linear on each part $M'$.
\item\label{it:cellGamma}
Any definable subset $N \subset \Gamma^{m} \times \Gamma$ can be written
as a boolean combination of sets of the following forms:
\[
\begin{array}{c@{\quad}l}
M \times \Gamma
&\text{for $M \subset \Gamma^{m}$ definable}
\\
\{(\kappatup, \lambda)\in \Gamma^{m} \times \Gamma \mid \lambda \lesseqqgtr \ell(\kappa)\}
&\text{for $\ell\colon \Gamma^{m} \to \Gamma$ linear}
\\
\Gamma^{m} \times \ccGamma
&\text{for $\ccGamma \in \Gamma/\rho\Gamma$, $\rho \in \Gamma$}
.
\end{array}
\]
\end{enumerate}
\end{lem}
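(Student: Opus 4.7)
The plan is to derive both parts directly from Cluckers' cell decomposition theorem for $(\Gamma, 0, +, <)$, which the paper cites just before the lemma. Recall that this theorem partitions any definable subset of $\Gamma^{m+1}$ into finitely many \emph{cells}: each cell is defined by fixing the $\kappatup$-coordinate in a definable base $M' \subset \Gamma^{m}$, constraining the last coordinate $\lambda$ either by an equation $\lambda = \ell_0(\kappatup)$ (a \emph{thin} cell) or by two linear bounds $\ell_1(\kappatup) \lesseqqgtr \lambda \lesseqqgtr \ell_2(\kappatup)$ with $\pm\infty$ permitted (a \emph{fat} cell), and additionally imposing a congruence condition $\lambda \equiv c \pmod{\rho}$.

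For \rit{linGamma}, I would apply cell decomposition to the graph of $\ell \colon M \to \Gamma$, viewed as a definable subset of $\Gamma^{m+1}$. Since it is the graph of a function, each cell in the decomposition must be of thin type: a fat cell would produce infinitely many $\lambda$-values over some $\kappatup$ in its base. Hence each cell has the form $\{(\kappatup, \ell_0(\kappatup)) : \kappatup \in M'\}$ with $\ell_0$ linear, and projecting these cells back to $M$ gives a partition on whose parts $\ell$ agrees with a linear function. The extension to target $\Gamma \cup \{\infty\}$ is obtained by first separating off the definable preimage of $\infty$, on which $\ell$ is linear in the extended sense by convention.

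For \rit{cellGamma}, I would apply cell decomposition directly to $N \subset \Gamma^{m} \times \Gamma$ and read off each resulting cell as a finite intersection of the three listed generator types. Specifically, the base condition $\kappatup \in M'$ becomes $M' \times \Gamma$; each linear bound $\ell_i(\kappatup) \lesseqqgtr \lambda \lesseqqgtr \ell_j(\kappatup)$ is an intersection of sets of the form $\{\lambda \lesseqqgtr \ell(\kappatup)\}$ (and a thin cell $\lambda = \ell_0(\kappatup)$ is the intersection of two such non-strict bounds); and the congruence condition corresponds to a set $\Gamma^{m} \times \ccGamma$ for some coset $\ccGamma \in \Gamma/\rho\Gamma$. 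Taking the finite union of these cell descriptions expresses $N$ as a boolean combination of sets of the three listed forms.

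There is no serious obstacle here; the only point needing care is the bookkeeping translation from the standard form of Presburger cells to the three generator types enumerated in the lemma --- in particular, handling thin cells and strict versus non-strict inequalities via boolean combinations of the non-strict bound sets. In effect the lemma is just a streamlined reformulation of cell decomposition designed to fit the remainder of the paper, and its proof reduces to invoking the cited theorem.
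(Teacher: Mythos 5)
Your proposal is correct and matches the paper's intent exactly: the paper gives no proof, simply presenting the lemma as ``an immediate consequence of the cell decomposition theorem'' (Cluckers, Theorem~1), and your write-up is precisely the routine unpacking of that claim. The only cosmetic caution is in part~\rit{linGamma}: depending on the exact cell conventions, a fat cell over the graph could in principle have a single $\lambda$-value per fiber (via coinciding bounds combined with a congruence), in which case $\lambda$ is still piecewise linear in $\kappatup$ after a further refinement rather than literally forced to be of thin type --- but this does not affect the conclusion.
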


\subsection{Trees and Swiss cheese}

There are different ways to define trees. Let
me fix the variant I will use.

\begin{defn}
A \emph{tree} $\tree$ is a set (of \emph{nodes}), together with a binary
is-child-of relation, which satisfies the usual axioms. However,
we do allow trees to be empty.
Define the \emph{root} (if the tree is non-empty),
the \emph{leaves} and the \emph{depth} $\depth(v) = \depth_{\tree}(v)$ of a node $v\in \tree$ as usual.

We say that $(v, v')$ is an \emph{edge} of $\tree$ if $v'$ is a child of $v$.
A \emph{path} (of length $n$) is a sequence $v_0, \dots, v_n$ of nodes where
$(v_i,v_{i+1})$ are edges.

The class of all trees will be denoted by $\Trees$.

Define isomorphisms of trees as usual. The product $\tree_1 \times \tree_2$
of two trees is defined layerwise.

If $\tree$ and $\tree'$ are two non-empty trees and $v$ is a node of $\tree$, then
we will sometimes construct a new tree by \emph{attaching $\tree'$ to $v$}.
This means: take the disjoint union of the nodes and then
identify the root of $\tree'$ with $v$.
\end{defn}

We already gave a definition of the tree of a set in the introduction.
Here is a slight generalization.

\begin{defn}
Suppose $X \subset \Qp^n$ is a set and
${B_0} = \ball{\xtup_0, \lambda_0} \subset \Qp^n$
a ball. Then the \emph{tree of $X$ on $B_0$} is
\[\Tr_{B_0}(X) := \Tr_{\xtup_0, \lambda_0}(X) :=
\{\ball{\xtup, \lambda} \subset B_0 \mid \ball{\xtup, \lambda} \cap X \ne \emptyset\}
,
\]
with the tree structure induced by inclusion.
Set $\Tr(X) := \Tr_{\Zp}(X)$.
\end{defn}
\begin{rem}
$\Tr_{B_0}(X)$ only depends on $B_0 \cap X$. In particular, $\Tr_{B_0}(X)$
is empty if and only if $B_0 \cap X = \emptyset$.
\end{rem}

\begin{ex}
The tree $\Tr(\Pt)$ of a one-point set is just one infinite path.
$\Tr(\Zp^n)$ is the infinite tree where each node has exactly $p^n$
children.
\end{ex}

One technique to determine the tree $\Tr(X)$ of a definable set will
be to cut out some balls $B_i$ on which $X$ is particularly complicated,
compute the trees $\Tr_{B_i}(X)$ separately,
compute the tree on the remainder,
and then put everything together. We define notation suitable for this.

\begin{defn}
A \emph{slice of Swiss cheese} (or a \emph{cheese}, for short) is a set of the form
$S = B \ohne \bigcup_{i\in I}B_i$, where $I$ is a finite index set and
$B$ and $B_i$ are balls in $\Zp^n$, satisfying $B_i \subset B$
and $B_i \cap B_j = \emptyset$ for $i \ne j$.
The set of balls $B_i$ (the ``holes'') is part of the cheese datum.
\end{defn}
%Most of the times, the same set $S$ can be written as a cheese in different ways.
%When we speak about a set $S$ as a cheese, it will always be clear from the context
%which set of holes we are thinking of.

\begin{defn}
Let $S = B_0 \ohne \bigcup_{i\in I}B_i \subset \Zp^n$ be a cheese and
$X \subset \Zp^n$ a set.
Then the \emph{tree $\Tr_S(X)$ of $X$ on $S$} is the subtree of
$\Tr_{B_0}(X)$ consisting of those nodes $B$ which are not
a proper subset of any $B_i$, $i \in I$.
\end{defn}
We will only be interested in the tree $\Tr_S(X)$ when none of the
intersections $X \cap B_i$ is empty. In that case,
the balls $B_i$ are nodes of $\Tr_S(X)$, and
the total tree $\Tr_{B_0}(X)$ can be obtained from $\Tr_S(X)$ by attaching
$\Tr_{B_i}(X)$ to the node $B_i$ for each $i \in I$.

\section{Computing the first trees}
\label{sect:bsp}

The definition of a tree of level $d$ is rather involved, so
let us first compute a few examples to motivate it. To this end,
we first prove some basic lemmas on trees. In particular,
we will check that in certain cases the tree of a set is determined
(in an easy way) by the trees of its fibers; this is a key reason for
trees of definable sets not being too complicated.

\subsection{Lipschitz continuously varying fibers}

Isomorphisms between the trees $\Tr(X) \to \Tr(X')$ of two sets $X, X' \subset \Zp^n$
correspond to isometric bijections between the $p$-adic closures
$\clX \to \clX'$. More precisely, the following Lemma holds.

\begin{lem}\label{lem:treeIso}
Suppose that $X, X' \subset \Qp^n$ are two arbitrary sets and
$B = \ball{\xtup_0, \lambda}, B' = \ball{\xtup'_0, \lambda'} \subset \Qp^n$
are two balls. Then bijection $\phi\colon B \cap X \to B' \cap X'$ satisfying 
\begin{equation}\label{eq:treeIso}
v(\phi(\xtup_1) - \phi(\xtup_2)) = 
v(\xtup_1 - \xtup_2) - \lambda + \lambda'
\quad \text{for all } \xtup_{1}, \xtup_{2} \in B \cap X
\end{equation}
induces an isomorphism of trees
\[
\begin{aligned}
\phitree\colon \Tr_B(X) &\longrightarrow \Tr_{B'}(X')\\
\ball{\xtup, \mu} &\longmapsto \ball{\phi(\xtup), \mu - \lambda + \lambda'}
,
\end{aligned}
\]
where $\xtup \in B \cap X$ and $\mu \ge \lambda$.
On the other hand, any isomorphism
$\phitree\colon \Tr_B(X) \rightarrow \Tr_{B'}(X')$
induces a bijection $\phi\colon B \cap \clX \to B' \cap \clX'$
satisfying \req{treeIso}.
\end{lem}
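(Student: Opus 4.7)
The plan is to prove the two directions separately. Both are essentially bookkeeping once one unpacks the definitions, but the key point is that the isometry condition \req{treeIso} is exactly what lets a bijection descend to a bijection of balls of any given radius.

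\textbf{Forward direction.} Assume $\phi$ satisfies \req{treeIso}. First I would verify that $\phitree$ is well defined on nodes, i.e.\ that $\ball{\phi(\xtup), \mu - \lambda + \lambda'}$ does not depend on the representative $\xtup \in \ball{\xtup,\mu}\cap X$. If $\xtup_1, \xtup_2$ both lie in $\ball{\xtup,\mu}\cap X$, then $v(\xtup_1-\xtup_2)\ge \mu$, so by \req{treeIso} $v(\phi(\xtup_1)-\phi(\xtup_2))\ge \mu-\lambda+\lambda'$, which means $\phi(\xtup_1)$ and $\phi(\xtup_2)$ lie in the same ball of radius $\mu-\lambda+\lambda'$. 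Similarly, \req{treeIso} applied in both directions (using $\phi^{-1}$, which also satisfies the condition with $\lambda$ and $\lambda'$ swapped) shows that a ball $\ball{\phi(\xtup),\mu-\lambda+\lambda'}$ is the image of exactly one ball of radius $\mu$ meeting $X$, giving bijectivity on each layer. Preservation of the child relation is immediate: $\phitree$ shifts radii by the constant $-\lambda+\lambda'$, so $\ball{\xtup,\mu+1}\subsetneq\ball{\xtup,\mu}$ maps to $\ball{\phi(\xtup),\mu+1-\lambda+\lambda'}\subsetneq\ball{\phi(\xtup),\mu-\lambda+\lambda'}$.

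\textbf{Reverse direction.} Assume $\phitree\colon \Tr_B(X)\to\Tr_{B'}(X')$ is an isomorphism. Given $\xtup\in B\cap\clX$, the balls $\ball{\xtup,\mu}$ for $\mu\ge\lambda$ form an infinite descending path in $\Tr_B(X)$ (each meets $X$ since $\xtup$ is a closure point). The image path under $\phitree$ is an infinite descending chain of balls in $B'$ whose radii tend to $\infty$; by completeness this chain has a unique intersection point, which I define to be $\phi(\xtup)$. Then $\phi(\xtup)\in B'\cap\clX'$ since every ball in the image chain meets $X'$. Applying the same construction to $\phitree^{-1}$ gives a two-sided inverse, so $\phi$ is a bijection $B\cap\clX\to B'\cap\clX'$.

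To check \req{treeIso} for $\phi$, note that for $\xtup_1,\xtup_2\in B\cap\clX$ and any $\mu\ge\lambda$,
\[
v(\xtup_1-\xtup_2)\ge \mu \iff \ball{\xtup_1,\mu}=\ball{\xtup_2,\mu}
\iff \phitree(\ball{\xtup_1,\mu})=\phitree(\ball{\xtup_2,\mu}),
\]
and the right-hand ball equals $\ball{\phi(\xtup_1),\mu-\lambda+\lambda'}$ (and similarly for $\xtup_2$), so this is equivalent to $v(\phi(\xtup_1)-\phi(\xtup_2))\ge \mu-\lambda+\lambda'$. Since this holds for every $\mu$, the valuations agree exactly, i.e.\ \req{treeIso} holds.

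\textbf{Main obstacle.} The only slightly delicate point is the reverse direction, where one must invoke completeness of $\Qp$ to turn an infinite descending path in $\Tr_{B'}(X')$ into an honest point of $\clX'$, and verify that the resulting assignment is well defined on $\clX$ (not just $X$) and bijective. Everything else is a direct translation between the metric on $\Qp^n$ and the inclusion relation on balls.
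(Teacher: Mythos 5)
Your proof is correct and follows essentially the same route as the paper's (much terser) proof: well-definedness of $\phitree$ from \req{treeIso}, inverses inducing inverses, and in the reverse direction identifying $\phi(\xtup)$ with the unique point in $\bigcap_{\mu\ge\lambda}\phitree(\ball{\xtup,\mu})$ via the correspondence between closure points and infinite paths. You simply spell out the details (layerwise bijectivity, preservation of the child relation, and the $\mu$-quantified equivalence giving \req{treeIso}) that the paper leaves implicit.
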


\begin{proof}
\req{treeIso} implies that $\phitree$ is well-defined, and an inverse
of $\phi$ induces an inverse of $\phitree$. For the other direction,
note that $B \cap \clX$ is in bijection to the set of infinite paths of
$\Tr_B(X)$ and define $\phi(x)$ as the only element in the intersection
$\bigcap_{\mu \ge \lambda}\phitree(\ball{x,\mu})$.
\end{proof}

A crucial point in the whole analysis of trees is the following
observation:
if $X \subset \Zp \times \Zp$ is a set whose vertical fiber
$X_x$ does not vary too quickly with $x$,
then the tree $\Tr(X)$ is the same as if the fiber would not vary
at all. A similar statement is true in
higher dimensions. We formulate this as a lemma.

\begin{lem}\label{lem:contTree}
Let $X \subset \Zp^m \times \Zp^n$ be any set and denote by $X_{\xtup} := \{\ytup\in
\Zp^n \mid (\xtup,\ytup) \in X \}$
its fiber at $\xtup \in \Zp^m$. Suppose that
for any $\xtup_1, \xtup_2 \in \Zp^m$, any $\ytup \in \Zp^n$ and any
$\lambda \le v(\xtup_1 - \xtup_2)$, we have
$\Tr_{\ytup,\lambda}(X_{\xtup_1}) \cong \Tr_{\ytup,\lambda}(X_{\xtup_2})$.
Then $\Tr(X) \cong \Tr(\Zp^m) \times \Tr(X_{\xtup})$
for any $\xtup \in \Zp^m$.
\end{lem}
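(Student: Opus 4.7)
The plan is to construct an isomorphism $\Psi\colon \Tr(\Zp^m) \times \Tr(X_{\xtup_0}) \to \Tr(X)$ recursively, level by level, where $\xtup_0$ is the representative from the statement. The key observation is that every ball $B \subset \Zp^{m+n}$ at depth $\mu$ splits as $B = B^{(1)} \times B^{(2)}$, and for any two $\xtup_1, \xtup_2 \in B^{(1)}$ we have $v(\xtup_1 - \xtup_2) \ge \mu$, so the hypothesis applied with $\lambda = \mu$ and any $\ytup \in B^{(2)}$ (so that $\ball{\ytup, \mu} = B^{(2)}$) yields $\Tr_{B^{(2)}}(X_{\xtup_1}) \cong \Tr_{B^{(2)}}(X_{\xtup_2})$. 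In particular, the condition $B^{(2)} \cap X_{\xtup'} \ne \emptyset$ (equivalently, that $B \cap X \ne \emptyset$) is independent of the choice of $\xtup' \in B^{(1)}$, and the isomorphism class of the subtree of $\Tr(X)$ below $B$ depends only on $B^{(1)}$ together with one representative fibre tree.

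The recursion maintains, at each product-tree node $(B^{(1)}, B^{(2)})$ at depth $\mu$ already mapped to an image $\Psi(B^{(1)}, B^{(2)}) = B^{(1)} \times C^{(2)}$, and for each $\xtup' \in B^{(1)}$, a chosen tree isomorphism $\alpha_{\xtup'}\colon \Tr_{B^{(2)}}(X_{\xtup_0}) \iso \Tr_{C^{(2)}}(X_{\xtup'})$. The base case at $\mu = 0$ reduces to $\Tr(X_{\xtup_0}) \cong \Tr(X_{\xtup'})$ for all $\xtup' \in \Zp^m$, which is the hypothesis at $\lambda = 0$. For the inductive step, consider a child $(D^{(1)}, D^{(2)})$ of $(B^{(1)}, B^{(2)})$ in the product tree: fix any $\xtup_{D^{(1)}} \in D^{(1)}$, set $E^{(2)} := \alpha_{\xtup_{D^{(1)}}}(D^{(2)})$ (a depth-one child of $C^{(2)}$ in $\Tr(X_{\xtup_{D^{(1)}}})$), and define $\Psi(D^{(1)}, D^{(2)}) := D^{(1)} \times E^{(2)}$. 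For the new inductive data, restrict $\alpha_{\xtup_{D^{(1)}}}$ below $D^{(2)}$ to obtain an isomorphism $\Tr_{D^{(2)}}(X_{\xtup_0}) \iso \Tr_{E^{(2)}}(X_{\xtup_{D^{(1)}}})$, and compose with the isomorphism $\Tr_{E^{(2)}}(X_{\xtup_{D^{(1)}}}) \cong \Tr_{E^{(2)}}(X_{\xtup'})$ supplied by the hypothesis at $\lambda = \mu+1$ (applicable since $\xtup_{D^{(1)}}, \xtup' \in D^{(1)}$ at depth $\mu+1$).

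It remains to check that $\Psi$ is a well-defined tree isomorphism. Validity of the image nodes follows because $E^{(2)} \cap X_{\xtup_{D^{(1)}}} \ne \emptyset$ ensures $(D^{(1)} \times E^{(2)}) \cap X \ne \emptyset$; parent-child is preserved since $E^{(2)} \subset C^{(2)}$ by construction; and surjectivity onto the children of $B^{(1)} \times C^{(2)}$ in $\Tr(X)$ follows from the key observation applied at depth $\mu+1$, which identifies these children precisely as pairs $(D^{(1)}, E^{(2)})$ with $E^{(2)} \cap X_{\xtup_{D^{(1)}}} \ne \emptyset$, matched bijectively with the children of $(B^{(1)}, B^{(2)})$ in the product via $\alpha_{\xtup_{D^{(1)}}}$. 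The main obstacle is the bookkeeping: the inductive family $\alpha_{\xtup'}$ is indexed by every $\xtup' \in B^{(1)}$ but the descent uses one chosen $\xtup_{D^{(1)}}$, so the hypothesis must be chained twice---once at scale $\mu$ to connect $X_{\xtup_0}$ to $X_{\xtup_{D^{(1)}}}$, and once at scale $\mu+1$ inside $D^{(1)}$ to replace $\xtup_{D^{(1)}}$ by an arbitrary $\xtup' \in D^{(1)}$ and thus restore the inductive hypothesis at the new node.
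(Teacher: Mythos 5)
Your construction is correct and follows essentially the same strategy as the paper's proof: go down level by level, using the hypothesis (applied at each depth $\lambda$ to connect fibres within a common $\xtup$-ball of radius $\lambda$) to build compatible tree isomorphisms. The paper packages the same idea a bit more cleanly by first introducing the ``approximation'' sets $X^{(\lambda)}:=\bigcup_{\atup\in A_\lambda}\ball{\atup,\lambda}\times X_\atup$ for a fixed system $A_\lambda$ of representatives, showing that $\Tr(X^{(\lambda)})$ agrees with $\Tr(X)$ up to depth $\lambda$, and then composing isomorphisms $\psi^{(\lambda)}\colon\Tr(X^{(\lambda)})\iso\Tr(X^{(\lambda+1)})$ that are the identity on the first $\lambda$ levels; your $\xtup_{D^{(1)}}$ plays exactly the role of the paper's representative $\atiltup\in A_{\lambda+1}$, and your maintained family $\alpha_{\xtup'}$ (of which only the representative's member is actually used at each step) corresponds to the paper's $\phi_{\atiltup}$.
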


\begin{rem}
By rescaling,
the lemma implies a similar statement for a subset $X$ of any ball
$B\subset \Qp^m \times \Qp^n$.
\end{rem}

\begin{proof}
For $\lambda \ge 0$, let
$A_{\lambda} := \{0, 1, \dots, p^\lambda - 1\}^m \subset \Zp^m$
be a set of representatives of the balls of radius $\lambda$, and
define the following ``approximations'' to $X$:
\[
X^{(\lambda)} := \bigcup_{\atup\in A_\lambda}\ball{\atup, \lambda} \times
X_{\atup}
.
\]
In particular $X^{(0)} = \Zp^m \times X_0$. Without loss, we will prove
$\Tr(X) \cong \Tr(X^{(0)})$.
We will verify that
the tree of $X^{(\lambda)}$ coincides with the tree of $X$ up to depth $\lambda$
and define isomorphisms $\psi^{(\lambda)}\colon \Tr(X^{(\lambda)}) \iso \Tr(X^{(\lambda + 1)})$
which are the identity up to depth $\lambda$. By putting these together, we get
an isomorphism $\Tr(X^{(0)}) \iso \Tr(X)$ which is equal to
$\psi^{(\lambda)} \circ \dots \circ \psi^{(0)}$ on nodes of depth less or
equal to $\lambda$.

To check that
$\Tr(X^{(\lambda)})$ and $\Tr(X)$ coincide up to depth $\lambda$, we have to verify
that $X^{(\lambda)} \cap (B \times B') \ne \emptyset$ if and only if
$X \cap (B \times B') \ne \emptyset$
for any ball $B \times B' \subset \Zp^m \times \Zp^n$ of radius $\lambda$.
Fix $\atup \in A_\lambda$ such that $B = \ball{\atup, \lambda}$.
We have $X^{(\lambda)} \cap (B \times B') = B \times (X_{\atup}
\cap B')$, so ``$\Rightarrow$'' is clear.
For ``$\Leftarrow$'', suppose $(\xtup,\ytup) \in X \cap (B \times B')$.
By assumption there exists an isomorphism of trees $\Tr_{B'}(X_{\xtup}) \iso
\Tr_{B'}(X_{\atup})$,
so non-emptiness of $X_{\xtup} \cap B'$ implies non-emptiness of $X_{\atup} \cap B'$.

We define $\psi^{(\lambda)}\colon \Tr(X^{(\lambda)}) \to \Tr(X^{(\lambda + 1)})$
to be the identity up to depth $\lambda$, and it remains to find an isomorphism
$\Tr_{B \times B'}(X^{(\lambda)}) \to \Tr_{B \times B'}(X^{(\lambda + 1)})$
for each ball $B \times B' \subset \Zp^m \times \Zp^n$ of radius $\lambda$.

Set $\{\atup\} := B \cap A_\lambda$ and
$\Atil := B \cap A_{\lambda + 1}$. Then we have
\begin{align*}
X^{(\lambda)} \cap (B \times B') &=
B \times (X_{\atup} \cap B') \qquad \text{and}
\\
X^{(\lambda+1)} \cap (B \times B') &=
\bigcup_{\atiltup \in \Atil}\ball{\atiltup, \lambda+1} \times (X_{\atiltup} \cap B')
.
\end{align*}

By assumption, for each $\atiltup \in \Atil$
we have an isomorphism
$\phi_{\atiltup}\colon \Tr_{B'}(X_{\atup}) \to \Tr_{B'}(X_{\atiltup})$.
Now suppose 
$C \times C' \in \Tr(X^{(\lambda)})$
is a node strictly below $B \times B'$, and let $\atiltup \in \Atil$ be such that
$C \subset \ball{\atiltup, \lambda+1}$.
Then we define $\psi^{(\lambda)}(C \times C') := C \times \phi_{\atiltup}(C')$.
\end{proof}

Combining this lemma with Lemma~\ref{lem:treeIso}, we get:

\begin{cor}\label{cor:contTree}
Let $X \subset \Zp^m\times\Zp^n$ be any set and denote by $X_{\xtup} := \{\ytup\in \Zp^n
\mid (\xtup,\ytup) \in X \}$ its fiber at $\xtup \in \Zp^m$. Suppose that
for any $\xtup_1, \xtup_2 \in \Zp^m$ there exists a bijective isometry
$\phi\colon X_{\xtup_1} \to X_{\xtup_2}$ which additionally satisfies
$v(\phi(\ytup) - \ytup) \ge v(\xtup_2 - \xtup_1)$ for any $\ytup
\in X_{\xtup_1}$. Then
$\Tr(X) \cong \Tr(\Zp^m) \times \Tr(X_{\xtup})$ for any $\xtup \in \Zp^m$.
\end{cor}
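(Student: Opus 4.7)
The plan is simply to combine the two preceding results: Lemma~\ref{lem:contTree} reduces the claim about $\Tr(X)$ to a statement about trees on vertical fibers, and Lemma~\ref{lem:treeIso} produces those from the given family of isometries. Thus the only thing to verify is the hypothesis of Lemma~\ref{lem:contTree}: for every $\xtup_1, \xtup_2 \in \Zp^m$, every $\ytup \in \Zp^n$, and every $\lambda \le v(\xtup_1 - \xtup_2)$, I must exhibit an isomorphism $\Tr_{\ytup,\lambda}(X_{\xtup_1}) \cong \Tr_{\ytup,\lambda}(X_{\xtup_2})$.

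For this, take the bijective isometry $\phi\colon X_{\xtup_1} \to X_{\xtup_2}$ provided by the hypothesis and apply Lemma~\ref{lem:treeIso} with $B = B' = \ball{\ytup,\lambda}$ (so $\lambda = \lambda'$). It is enough to check that $\phi$ restricts to a bijection $\ball{\ytup,\lambda} \cap X_{\xtup_1} \to \ball{\ytup,\lambda} \cap X_{\xtup_2}$ satisfying \req{treeIso}. If $\ytup' \in \ball{\ytup,\lambda} \cap X_{\xtup_1}$, then $v(\phi(\ytup') - \ytup') \ge v(\xtup_1 - \xtup_2) \ge \lambda$, so $\phi(\ytup') \in \ball{\ytup',\lambda} = \ball{\ytup,\lambda}$; the same estimate applied to $\phi^{-1}$ (which inherits it, since the estimate depends only on the unordered pair $\{\ytup', \phi(\ytup')\}$) gives the reverse inclusion, so the restriction is bijective. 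The isometry property of $\phi$ is precisely \req{treeIso} in the case $\lambda = \lambda'$, so Lemma~\ref{lem:treeIso} delivers the desired tree isomorphism on each ball of radius $\lambda$, and Lemma~\ref{lem:contTree} then gives the conclusion.

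There is really no substantive obstacle; the argument is a short bookkeeping exercise on top of the two previous lemmas. The only point that deserves a moment's care is the self-duality of the estimate $v(\phi(\ytup') - \ytup') \ge v(\xtup_1 - \xtup_2)$, which is what lets one conclude bijectivity of the restriction from the hypothesis on $\phi$ alone, without invoking a separate isometry in the opposite direction.
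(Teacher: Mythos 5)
Your proposal is correct and follows essentially the same route as the paper's proof: reduce to Lemma~\ref{lem:contTree}, and verify its hypothesis by applying Lemma~\ref{lem:treeIso} to the restriction of $\phi$ to each ball $\ball{\ytup,\lambda}$ with $\lambda \le v(\xtup_1-\xtup_2)$. The only difference is that you spell out the bijectivity of the restriction (via the symmetry of the estimate under $\phi\leftrightarrow\phi^{-1}$), which the paper states in one sentence without elaboration.
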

\begin{proof}
The condition $v(\phi(\ytup) - \ytup) \ge v(\xtup_2 - \xtup_1)$ ensures that
$\phi$ induces a bijection $\ball{\ytup,\lambda} \cap X_{\xtup_1}
\to \ball{\ytup,\lambda} \cap X_{\xtup_2}$ for any $\ytup\in \Zp^n$
and any $\lambda \le v(\xtup_2 - \xtup_1)$.
Thus Lemma~\ref{lem:treeIso} yields
$\Tr_{\ytup,\lambda}(X_{\xtup_1}) \cong \Tr_{\ytup,\lambda}(X_{\xtup_2})$
and Lemma~\ref{lem:contTree} applies.
\end{proof}

\begin{rem}
Again, a similar statement holds for a subset $X$ of any ball
$B \subset \Qp^m \times \Qp^n$.
\end{rem}

If $X$ satisfies the prerequisites of this corollary, we will say that
\emph{the fiber $X_{\xtup}$ varies Lipschitz continuously with $\xtup$}.

\begin{rem}
An isometry $\psi\colon \Zp^m\times\Zp^n \to \Zp^m\times\Zp^n$ fixing
the first $m$ coordinates preserves Lipschitz continuity of fibers.
%if $X_{\xtup}$ varies Lipschitz continuously, then so does
%$\psi(X_{\xtup}) = (\psi(X))_{\xtup}$.
\end{rem}

\subsection{Trees of smooth schemes}
\label{subsect:smooth}

We will now prove Theorem~\ref{thm:mainSmooth}
(except for the ``in particular'' part), i.e.\ we will determine the
tree of a scheme which is sufficiently smooth in the sense of the theorem.
Let us first check how to reduce the computation of trees of general schemes
of finite type to trees of affine schemes.

\begin{lem}\label{lem:affin}
Suppose $V$ is a scheme of finite type and $(V_i)_{i \in I}$ is a covering
of $V$. Then for any child $v$ of the root of $\Tr(V)$, there is an $i \in I$
and a child $v'$ of the root of $\Tr(V_i)$ such that the subtree of $\Tr(V)$
starting at $v$ and the subtree of $\Tr(V_i)$ starting at $v'$ are isomorphic.
\end{lem}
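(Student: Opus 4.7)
My plan is to identify each child $v$ of the root of $\Tr(V)$ with an $\bbF_p$-rational point and then transfer the computation of the subtree below $v$ to a single chart of the covering by exploiting openness. By the definition of $\Tr(V)$, the children of its root correspond bijectively to the points in the image of the reduction map $V(\Zp) \to V(\bbF_p)$. So first I would let $\bar{x} \in V(\bbF_p)$ be the point corresponding to $v$, and pick an index $i \in I$ with $\bar{x} \in V_i$; this is possible because $(V_i)_{i \in I}$ is a (Zariski) open covering of $V$. Write $v' \in \Tr(V_i)$ for the child of the root of $\Tr(V_i)$ corresponding, in the same way, to $\bar{x}$ viewed as a point of $V_i(\bbF_p)$.

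The key observation is that any $\Zp$-point $x \colon \spec\Zp \to V$ whose closed-point image lies in $V_i$ factors uniquely through the open subscheme $V_i$: indeed, $x^{-1}(V_i)$ is open in $\spec\Zp$ and contains the closed point, hence is all of $\spec\Zp$. Applying this to every $\Zp$-point of $V$ that reduces to $\bar{x}$, one obtains the equality
\[
\{x \in V(\Zp) \mid x \bmod p = \bar{x}\}
= \{x \in V_i(\Zp) \mid x \bmod p = \bar{x}\},
\]
where the right-hand set is considered as a subset of $V(\Zp)$ via the open immersion $V_i \inject V$.

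To finish, I would observe that for every $\lambda \ge 1$, the nodes of $\Tr(V)$ of depth $\lambda$ sitting below $v$ are by definition the images in $V(\bbZ/p^\lambda\bbZ)$ of the left-hand set above, and analogously on the $V_i$-side with images in $V_i(\bbZ/p^\lambda\bbZ)$. Since $V_i \to V$ is a monomorphism, the induced map $V_i(\bbZ/p^\lambda\bbZ) \to V(\bbZ/p^\lambda\bbZ)$ is injective, so the equality of $\Zp$-points displayed above gives a bijection between these two node sets that commutes with the reduction maps from depth $\lambda+1$ to depth $\lambda$. Taking these bijections together for all $\lambda \ge 1$ and sending $v$ to $v'$ produces the desired isomorphism between the subtree of $\Tr(V)$ starting at $v$ and the subtree of $\Tr(V_i)$ starting at $v'$.

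I do not foresee a genuine obstacle here: the entire argument rests on the fact that $V_i$ is \emph{open} in $V$, so a $\Zp$-point cannot ``leave'' $V_i$ once its special fiber lies in $V_i$; the rest is purely combinatorial bookkeeping of depth levels. The lemma is really a statement that children of the root of $\Tr(V)$ see only an infinitesimal neighbourhood of a closed point, and such a neighbourhood is visible in any open chart containing that point.
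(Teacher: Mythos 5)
Your proof is correct and takes essentially the same approach as the paper: both rest on the observation that a $\Zp$-point of $V$ whose special fiber lies in the open $V_i$ must factor through $V_i$, since the only open subset of $\spec\Zp$ containing the closed point is all of $\spec\Zp$. You make the resulting bijection of node sets slightly more explicit (invoking that the open immersion is a monomorphism to get injectivity on $\bbZ/p^\lambda\bbZ$-points), but the substance is identical.
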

\begin{proof}
Denote by $s$ the special point of $\spec \Zp$ and
by $\eta$ the generic one. For some given $\lambda \ge 1$, write
$\sigma\colon\spec \bbF_p \to \spec\bbZ/p^\lambda\bbZ$ and
$\pi\colon\spec \bbZ/p^\lambda\bbZ \to \spec\Zp$ for the canonical maps.

Suppose $v \in V(\bbF_p)$ is a child of the root of $\Tr(V)$.
Choose $i$ such that $V_i$ contains the image of $v$.
The preimage $v'$ of $v$ under the map $V_i(\bbF_p) \to V(\bbF_p)$ is the child
of the root of $\Tr(V_i)$ we are looking for; we have to verify that
the whole tree below $v$ already appears in $\Tr(V_i)$.

Suppose that $w \in V(\bbZ/p^\lambda\bbZ)$ is a node of $\Tr(V)$ below $v$,
i.e.\ $w\circ \sigma = v$, and there exists an $x \in V(\Zp)$ such that
$w = x \circ \pi$. It is clear that $w$ has a preimage
$w' \in V_i(\bbZ/p^\lambda\bbZ)$. As $V_i$ is open and contains
$x(s)$, it also contains $x(\eta)$, so $\im x \subset V_i$.
Thus $x$ has a preimage $x' \in V_i(\Zp)$, and 
$w' = x' \circ \pi$.
\end{proof}

\begin{proof}[Proof of Theorem~\ref{thm:mainSmooth}]
Let $V$ be a scheme as in the theorem.
By Lemma~\ref{lem:affin}, it suffices to consider affine $V$;
we fix an embedding $V \inject \bbA^n$ and determine the tree of
$V(\Qp) \subset \Qp^n$.

Fix $\ztup \in V(\Qp) \cap \Zp^n$, and suppose that the dimension of
$V$ at $\ztup$ is $d$. We determine the tree
on a small ball $B := \ball{\ztup, \lambda}$ around $\ztup$.
Write $B$ as a product $\BX \times \BY$, with $\BX \subset \Zp^{d}$
and  $\BY \subset \Zp^{n - d}$, and denote the coordinates
by $X_1, \dots, X_d, Y_1, \dots , Y_{n-d}$.
To simplify notation, suppose $\ztup = 0$.

Let $f_1, \dots, f_{n-d} \in \Zp[X_1, \dots, X_d, Y_1, \dots , Y_{n-d}]$
be generators of the ideal
of $V$ in the local ring at $0$; regularity of that ring implies
that indeed $n-d$ polynomials suffice. Moreover, after possibly permuting
coordinates, the matrix $(\frac{\del f_i}{\del Y_j}(0))_{1\le i,j \le n-d}$ is
invertible over $\Qp$.
$\GL_n(\Zp)$ acts on $B$ by isometries, so by Lemma~\ref{lem:treeIso},
applying such matrices does not change the tree of $V(\Zp)$ on $B$.
Thus by using the column transformations of the smith normal form,
we may additionally suppose that
$\frac{\del f_i}{\del X_j}(0) = 0$  for $i \le n-d, j \le d$.

Now we apply the implicit function theorem (see e.g.~\cite{Igu:locZeta}).
This yields a power series
$\atup$ with coefficients in $\Qp$, from the variables $X_i$
to the variables $Y_j$
such that for $\lambda \gg 0$,
$\atup$ converges on $\BX$, and
for $(\xtup, \ytup) := (x_1, \dots, x_d, y_1, \dots, y_{n-d}) \in B$, we have
$(\xtup, \ytup) \in V(\Qp)$ if and only if $\ytup = \atup(\xtup)$.
As $\frac{\del f_i}{\del X_j}(0) = 0$,
this power series has no linear term,
so for $\lambda$ sufficiently large and $\xtup, \xtup' \in \BX$, we get
\begin{equation}\label{eq:smoothCont}
v(\atup(\xtup) - \atup(\xtup'))
\ge v(\xtup - \xtup')
;
\end{equation}
in particular, $\atup(\xtup) \in \BY$ for $\xtup \in \BX$.
Thus the fiber of $V(\Qp) \cap B$ at $\xtup \in \BX$ is exactly
$\{\atup(\xtup)\}$, and by \req{smoothCont},
it varies Lipschitz continuously with $\xtup$; hence Corollary~\ref{cor:contTree}
yields $\Tr_B(V(\Qp)) \cong \Tr(\Zp^d)$.

As $V(\Qp) \cap \Zp^n$ is compact in $p$-adic topology, we can cover it
by finitely many balls $B$ satisfying
$\Tr_B(V(\Qp)) \cong \Tr(\Zp^d)$ (possibly for different $d$, but
all satisfying $d \le \dim V$).
Moreover, in $\bbZ^n_p$ any two balls are either disjoint or contained in one
another, so
we may suppose that these balls $B$ are all disjoint. Thus the total
tree of $V(\Qp)$ consists of a finite tree (the subtree of $\Tr(\Zp^n)$
whose leaves are exactly the balls used in the cover), with a copy of
$\Tr(\Zp^d)$ attached to each leaf.
\end{proof}

The ``in particular'' part of Theorem~\ref{thm:mainSmooth} will be a direct
consequence of Lemma~\ref{lem:firstProp}.

\subsection{Example: the cusp curve}
\label{subsect:cusp}

Up to now, we only saw very simple trees. As a more complicated example, 
let us compute the tree of the
cusp curve $X = \{(x, y) \in \Zp^2 \mid x^3 = y^2\}$ when $p \ne 2$.
This tree will already contain most of the aspects appearing
in the general definition of level $d$ trees.

We will need the following notation: let $\bifurc(\kappa)$
be the tree which starts with a path of length $\kappa$ and then
has a bifurcation into two infinite paths; in other words,
$\bifurc(\kappa)$ is the tree of a two-point-set $\{x_1, x_2\}$,
where $v(x_1 - x_2) = \kappa$.

From the previous subsection, it is clear that $\Tr(X)$ might
be complicated only close to $(0,0)$; thus we will determine the tree
on squares which do not contain $(0, 0)$ and then put them
together. The largest squares not containing $(0,0)$ are of the form
$B = \ball{(x_0,y_0), \kappa + 1}$ with $\kappa = v(x_0,y_0)$.
Fix such $x_0,y_0,\kappa$.

If $v(x_0) > v(y_0)$, then $v(x) > v(y)$ for any $(x,y) \in B$.
This implies $x^3 \ne y^2$, so $B \cap X$ is empty. Thus in the following
we suppose $\kappa = v(x_0) \le v(y_0)$.

Write $B$ as a product
$\BX \times \BY = \ball{x_0, \kappa + 1} \times \ball{y_0, \kappa + 1}$,
and let us analyse the fiber of $X$ at some $x \in \BX$.
It is $X_x = \{\pm \sqrt{x^3}\}$
if this root exists and empty otherwise. By Hensels Lemma,
the root $\sqrt{x^3} = x\sqrt{x}$
exists if and only if $v(x)$ is even and the angular component of
$x$ is a square in the residue field $\bbF_p$.
Neither $v(x)$ nor the angular component of $x$ depend on
the specific choice of $x \in \BX$, so either all $X_x$ are empty
or all $X_x$ consist of two roots (for $\BX$ fixed).

If the roots don't exist, then $B \cap X$ is empty, so suppose
now that they do exist. Consider two elements $x_1, x_2 \in \BX$.
By applying Lemma~\ref{lem:root} to $\sqrt{\frac{x_1}{x_2}}$, one
checks that there is a suitable choice of roots
$\sqrt{x_1^3}$, $\sqrt{x_2^3}$ such that
\begin{equation}\label{eq:cuspFlat}
v\!\left(\textstyle{\sqrt{x_1^3} - \sqrt{x_2^3}}\right) \ge v(x_1 - x_2)
.
\end{equation}
In particular, $\sqrt{x_1^3} \in \BY$ if and only if
$\sqrt{x_2^3} \in \BY$. Moreover $v\big(\sqrt{x^3} - (-\sqrt{x^3})\big)$ does
not depend on $x \in \BX$, so we may apply
Corollary~\ref{cor:contTree} and get
$\Tr_{B}(X) \cong \Tr(\Zp) \times \Tr_{\BY}(X_x)$ for any $x \in \BX$.
It remains to determine
$\Tr_{\BY}(X_x)$. We have $v\big(\sqrt{x^3}\big) = v\big(\sqrt{x^3} - (-\sqrt{x^3})\big) =
\frac32\kappa$, so we get: if $\kappa = 0$, then there exist two balls
$\BY$ such that $\Tr_{\BY}(X_x) = \Tr(\Pt)$, and all other $\BY \cap X_x$
are empty; if $\kappa > 0$, then
$\Tr_{0, \kappa+1}(X_x) \cong \bifurc(\frac12\kappa - 1)$, and all other
$\BY \cap X_x$ are empty.

\begin{figure}
\newcommand{\kr}{[fill] circle (1pt)}
\newcommand{\duenn}{[line width=.3pt]}
\newcommand{\dick}{[line width=1pt]}
\newcommand{\etchoch}{node[anchor=south] {\vdots}}
\begin{tikzpicture}[x=6mm,y=5mm]
  
\draw\duenn
  (0,0) \kr -- (-1,1) \kr -- (-2,2) \kr -- (-3,3) \kr -- (-4,4) \kr --
  (-5,5) \kr -- (-5.7,5.7) node[anchor=south east] {$\ddots$};

\node at (-5.5,8) {$\ddots$};

\foreach \x in {-0.25,0.25,...,1.25} {
  \draw\duenn (0,0) -- (\x,1) \kr;
  \draw\dick (\x,1) -- ++(0,1) \kr -- ++(0,.7) \etchoch;
}

\foreach \x in {-2,-1} {
  \draw\duenn (-2,2) -- (\x,3) \kr;
  \draw\dick (\x,3) -- ++(-0.25,1) \kr -- ++(0,.7) \etchoch
             (\x,3) -- ++(+0.25,1) \kr -- ++(0,.7) \etchoch;
}

\foreach \x in {-4,-3} {
  \draw\duenn (-4,4) -- (\x,5) \kr;
  \draw\dick (\x,5) -- (\x,6) \kr -- ++(-0.25,1) \kr -- ++(0,.7) \etchoch
                       (\x,6) \kr -- ++(0.25,1) \kr -- ++(0,.7) \etchoch;
}

\end{tikzpicture}
\caption{The tree of the cusp curve $X = \{(x, y) \in \bbZ_5^2 \mid x^3 = y^2\}$; thick lines
mean ``multiply by $p$''.}
\label{fig:cusp}
\end{figure}
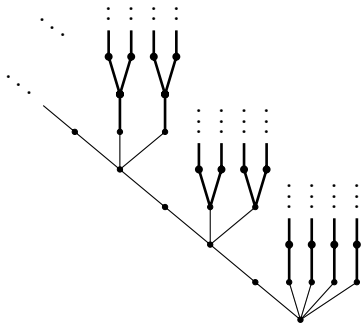

Assembling our results, we get the total tree of $X$ (see
Figure~\ref{fig:cusp}):
it consists of an infinite path
(the nodes $\ball{0, \kappa}$ for $\kappa \ge 0$) with some side branch attached to it.
The root has $p-1$ additional children, and each of these children is the
root of a copy of $\Tr(\Zp)$.
(The number $p-1$ comes from the fact that $\bbF_p$ contains
$\frac{p-1}{2}$ squares and each such square contributes two children.)
Finally, for each $\kappa \in 2\Gamma$, $\kappa \ge 2$,
the node $\ball{0, \kappa}$ has
$\frac{p-1}{2}$ additional children, each of which is the root
of a copy of $\Tr(\Zp) \times \bifurc(\frac12\kappa - 1)$.

\section{Trees of level $d$}
\label{sect:defTree}

\subsection{Definition of trees of level $d$}
\label{subsect:defTree}

We will now define a \emph{tree datum for a tree of level $d$} and
explain how to construct an actual tree out of it. By definition,
an arbitrary tree is of level $d$ if it is isomorphic to a tree
constructed in this way.
Such a tree of level $d$ will consist of a ``skeleton'' which has
only finitely many bifurcations, together with
trees of level $d-1$ attached to every node in some uniform way.
For this uniformity to make sense, we need a parametrized version of
these notions. A \emph{parametrized tree} is a map $\tree\colon M \to \Trees$,
where $M \subset \Gamma^m$ is definable.

A \emph{datum for a level $0$ tree} defined on $M \subset \Gamma^m$ consists of:
\begin{itemize}
\item
a finite tree $\skel$ (possibly empty)
\item
for each edge $\bone = (\joint, \joint')$ of $\skel$
a linear function $\ell_{\bone}\colon M\to \Gamma_{>0} \cup \{\infty\}$
(the ``length of $\bone$''); the value $\infty$ is only allowed if $\joint'$
is a leaf of $\skel$.
\end{itemize}
The nodes of $\skel$
will be called \emph{joints}; the edges will be called \emph{bones}.
An \emph{virtual joint} is a leaf following a bone of infinite length;
the other joints are \emph{real joints}.

Out of such a datum one constructs a tree $\tree(\kappatup)$
(for $\kappatup \in M$) as follows.
Start with a copy of $\skel$, but omitting the virtual joints,
and denote the copy of the joint $\joint \in \skel$
by $\joint(\kappatup)$. For each bone
$\bone = (\joint, \joint')$, add $\ell_{\bone}(\kappatup) - 1$
nodes between $\joint(\kappatup)$ and $\joint'(\kappatup)$ if $\joint'$
is real (thus creating a path of length $\ell_{\bone}(\kappatup)$ from
$\joint(\kappatup)$ to $\joint'(\kappatup)$), and add 
an infinite path below $\joint(\kappatup)$ if $\joint'$ is virtual;
denote the set of these new nodes by $\bone(\kappatup)$.

The \emph{depth} $\depth(\joint)$ of a joint is the function $\kappatup \mapsto
\depth(\joint(\kappatup))$ if $\joint$ is real and $\kappatup \mapsto \infty$
if $\joint$ is virtual.

Note that a given level $0$ tree $\tree\colon M \to \Trees$
can be described by a tree datum in different ways.
In particular, we may replace a bone of $\tree$
by several bones (of appropriate lengths) with joints in between.

\medskip

Before we describe level $d+1$ trees, we need to describe how
side branches of such trees look like. A \emph{datum for a side branch of
level $d$} (defined on $M$) consists of:
\begin{itemize}
\item
a non-empty finite tree $\fintree$
\item
for each leaf $w$ of $\fintree$, a level $d$ tree
$\tree_w\colon M \to \Trees$ such that $\tree_w(\kappatup)$ is non-empty.
\end{itemize}
The corresponding side branch $\branch(\kappatup) \in \Trees$
(for $\kappatup \in M$) consists of
$\fintree$ with $\Tr(\Zp) \times \tree_w(\kappatup)$ attached to
$w$ for each leaf $w$ of $\fintree$.

\medskip

Now, a \emph{datum for a level $d+1$ tree} (defined on $M$) is the following:
\begin{itemize}
\item
an element $\rho \in \Gamma_{>0}$
\item
a datum $(\skel, (\ell_{\bone}))$ for a level $0$ tree (defined on $M$),
such that for any bone $\bone$, the length
$\ell_{\bone}(\kappatup) \mod \rho$ does not depend on $\kappatup$;
denote by $\tree_0$ the tree build out of $(\skel, (\ell_{\bone}))$
\item
for each real joint $\joint$ of $\tree_0$, a side branch datum
defining a side branch $\branch_{\joint}\colon M \to \Trees$ of level $d$
\item
for each bone $\bone = (\joint, \joint')$ of $\tree_0$ and each congruence class
$\ccGamma \in \Gamma/\rho\Gamma$,
a side branch datum defining a side branch
$\branch_{\bone,\ccGamma}\colon N_{\bone,\ccGamma} \to \Trees$ of level $d$,
where
\[
N_{\bone,\ccGamma} = \{(\kappatup, \lambda) \in M \times \ccGamma
\mid \depth(\joint)(\kappatup)< \lambda < \depth(\joint')(\kappatup)\}
.
\]
\end{itemize}
The tree $\tree(\kappatup)$ is constructed as follows.
Start with $\tree_0(\kappatup)$, and to each node $v \in \tree_0(\kappatup)$
attach a side branch:
if $v = \joint(\kappatup)$ for some joint $\joint$, then attach
$\branch_{\joint}(\kappatup)$ to $v$.
Otherwise $v \in \bone(\kappatup)$ for some bone $\bone$, and
$\depth(v) \in \ccGamma$ for some $\ccGamma \in \Gamma/\rho\Gamma$.
Attach $\branch_{\bone,\ccGamma}(\kappatup, \depth(v))$ to $v$.

$\tree_0$ will be called the \emph{skeleton}
of $\tree$, and the joints and bones of $\tree$ are the joints and bones
of $\tree_0$.
The trees of level $d$ appearing in the side branch data 
will be called the \emph{side trees} of $\tree$. (Note that it does
not make sense to say that a side
tree is a subtree: some side trees are not even parametrized by the same set.)

An \emph{unparametrized tree of level $d$} is a parametrized tree
of level $d$ defined on the one-point set $M = \Gamma^0$.

\subsection{Piecewise level $d$ trees}
\label{subsect:defPiece}

In the definition of the previous subsection, we tried to be as restrictive
as possible. We will now show how one can weaken the conditions on
parametrized level $d$ trees without changing the notion of
unparametrized trees. While our first definition is useful to deduce
other statements about trees, the new definition will be more convenient
to show that a given tree is of level $d$.

Define a \emph{generalized level $d$ tree} in the same way as an
ordinary one, with the following modifications:
given a bone $\bone = (\joint, \joint')$, instead of cutting
\begin{equation}\label{eq:defNe}
N_{\bone} := \{(\kappatup, \lambda) \in M \times \Gamma
\mid \depth(\joint)(\kappatup)< \lambda < \depth(\joint')(\kappatup)\}
\end{equation}
into subsets according to $\lambda \mod \rho$, we allow $N_{\bone}$
to be cut into finitely many arbitrary definable subsets $N_{i}$
and use a separate side branch datum $S_{\bone,i}$ for each such subset.
Moreover, the condition on the length of
the bones modulo $\rho$ is removed, and
the side trees of a generalized level $d$ tree
are also allowed to be generalized. 

\begin{lem}\label{lem:genTree}
Unparametrized generalized level $d$ trees
are the same as unparametrized normal level $d$ trees.
\end{lem}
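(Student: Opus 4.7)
The plan is to prove the lemma by induction on $d$, with the easy direction being that every normal level $d$ tree is automatically a generalized one. For the nontrivial direction, the induction should be strengthened to a parametrized version so that it can be applied to the side trees, which are themselves parametrized on subsets of $\Gamma$ (not on the one-point set).

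For the base case $d = 0$, the two definitions coincide: level $0$ trees carry no side branches, so the three modifications in the generalized definition (arbitrary partitions of $N_\bone$, dropped length-mod-$\rho$ condition, generalized side trees) all concern features that first appear at level $1$.

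For the inductive step, I start with an unparametrized generalized level $d+1$ datum $\mathcal{D}$ with skeleton $\skel$ and, for each bone $\bone$, a finite definable partition $\{N_i\}$ of $N_\bone \subset \Gamma$ together with side branch data $S_{\bone,i}$. Since all bone lengths are constants, the ``length mod $\rho$'' condition will be automatic for any choice of $\rho$. The cell decomposition for $(\Gamma, 0, +, <)$ in one variable (the one-dimensional case of Lemma~\ref{lem:cellGamma}) expresses each $N_i$ as a finite union of sets of the form $\{a < \lambda < b\} \cap \{\lambda \equiv c \pmod{\rho_{i,j}}\}$. I then choose a common modulus $\rho$ divisible by every $\rho_{i,j}$ appearing across all bones. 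Next, I introduce a new real joint at each interval endpoint falling strictly inside some $N_\bone$, subdividing the bones into finitely many sub-bones; at each new joint (at some depth $t$) I attach the side branch obtained by evaluating $\branch_{\bone,i}$ at $\lambda = t$ for the unique $i$ with $t \in N_i$. On each resulting sub-bone $\bone^*$ and each residue class $\ccGamma \in \Gamma/\rho\Gamma$, the set $N_{\bone^*, \ccGamma}$ lies entirely inside a single piece $N_i$ (because no interval endpoints appear strictly inside $\bone^*$), so I can define $\branch_{\bone^*, \ccGamma}$ as the restriction of $\branch_{\bone, i}$. Finally, the side branches of the resulting datum are still generalized level $d$ trees in general, and by the parametrized form of the induction hypothesis they can be replaced by normal level $d$ data.

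The main obstacle will be making the parametrized strengthening of the induction precise, since the normal definition uses a single datum on all of $M$ while the natural conversion tends to partition $M$ into cells. The redemption is that the parameter sets arising as side-branch domains have the restricted form $N_{\bone^*, \ccGamma}$ (an arithmetic progression intersected with an interval or a ray), and the full two-variable version of Lemma~\ref{lem:cellGamma} applied to subsets of $M \times \Gamma$ gives just enough flexibility to normalize both the bone lengths modulo $\rho$ and the partition of $N_\bone$ on each such parameter set without further fragmenting $M$ itself. Carrying this bookkeeping through uniformly, and verifying that ``virtual'' bones of infinite length are handled analogously (only lower interval endpoints occur, leaving one terminal infinite sub-bone), completes the argument.
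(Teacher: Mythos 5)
Your overall strategy matches the paper's: induction on $d$, cell decomposition in $\Gamma$ to reduce arbitrary cuts of $N_\bone$ to the allowed ones (residue classes, interval cuts giving new joints, restrictions of $M$), and taking the lcm of the moduli to get a single $\rho$. However, there is a real gap in how you handle the parametrized strengthening, and it is exactly the gap the paper closes by introducing the auxiliary notion of \emph{piecewise} level $d$ trees (Lemma~\ref{lem:genTreeParam}).

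The problem is in your ``redemption'' paragraph. When you apply the induction hypothesis to a side tree $\tree_w$ parametrized on $N = N_{\bone^*, \ccGamma}$, the conversion from generalized to normal may force you to partition $N$: cell decomposition of a subset of $N \times \Gamma$ produces cuts of the form $M' \times \Gamma$ with $M' \subsetneq N$ definable, and normalizing a bone length $\ell$ with non-integer rational slope modulo $\rho$ forces a further splitting of $N$ into residue classes modulo a multiple of $\rho$. Neither operation is allowed if you insist on a \emph{single} normal datum on $N$. Your claim that the restricted form of $N$ (arithmetic progression meets interval) ``gives just enough flexibility'' is asserted rather than proved; in fact different residue classes $\ccGamma$ and different pieces $N_i$ will in general demand different interval cuts and different moduli, and coordinating these into a single datum at the outer level is exactly the bookkeeping you wave away in the final sentence. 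The paper sidesteps all of this cleanly: it proves the equivalence for \emph{piecewise} parametrized trees, so partitioning the parameter set is free, and it observes up front that for generalized trees the distinction between piecewise and non-piecewise side trees is immaterial (any partition of a side-tree domain can be absorbed into a finer partition of $N_\bone$). With that device the inductive step is essentially the argument you sketch, but without the unresolved coordination problem. Your argument is salvageable along these lines, but as written it does not establish the parametrized induction you correctly identified as necessary.
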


In the proof of this lemma, we will use trees $\tree\colon M \to \Trees$ which are only
\emph{piecewise of level $d$} (normal or generalized):
there exists
a finite partition of $M$ into definable subsets $M_{i}$, such that
each restricted tree $\tree\auf{M_{i}}$ is of level $d$ (normal
or generalized).
As ``piecewise'' only concerns parameters, Lemma~\ref{lem:genTree} is
a special case of the following lemma.

\begin{lem}\label{lem:genTreeParam}
Piecewise generalized level $d$ trees
are the same as piecewise normal level $d$ trees.
\end{lem}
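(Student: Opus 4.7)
The plan is induction on $d$. The direction ``normal implies generalized'' is immediate from the definitions; all the content is in the converse.

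For the base case $d=0$, the data of a generalized level $0$ tree coincides with that of a normal level $0$ tree, since no side branches and no bone-length congruence appear. For the inductive step, assume the statement at all levels $\le d$ and let $\tree\colon M\to\Trees$ be a piecewise generalized level $d+1$ tree. After restricting to one piece I may assume that $\tree$ itself is a generalized level $d+1$ tree with skeleton $(\skel,(\ell_\bone))$, joint side branches $\branch_{\joint}$, and bone side branches $\branch_{\bone,i}$ parametrized by arbitrary definable pieces $N_i$ of the corresponding $N_\bone$.

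The first part of the plan is to choose a common modulus. By Lemma~\ref{lem:cellGamma}(ii), each $N_i$ is a boolean combination of sets of the three listed forms; I collect the linear functions $\ell$ appearing in the ``$\lambda \lesseqqgtr \ell(\kappatup)$'' clauses together with the bone lengths $\ell_\bone$. I fix $\rho\in\Gamma_{>0}$ large enough to be compatible with all congruence classes appearing in these clauses and, via the induction hypothesis, with those required to normalize the side trees. Then I partition $M$ by the residues modulo $\rho$ of the enumerated linear functions, so that on each piece all of $\ell(\kappatup)\bmod\rho$ and $\ell_\bone(\kappatup)\bmod\rho$ are constant, meeting the bone-length congruence condition of the normal definition.

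Next I subdivide bones so that the partition $\{N_i\}$ is refined by congruence classes modulo $\rho$. For each linear threshold $\ell(\kappatup)$ that, on the current piece of $M$, lies strictly between $\depth(\joint)(\kappatup)$ and $\depth(\joint')(\kappatup)$, I insert a new real joint at depth $\ell(\kappatup)$, splitting $\bone$ into two sub-bones; cases in which the threshold falls above or below the bone are absorbed by further partitioning $M$, and clauses of the form $M'\times\Gamma$ are also absorbed into the $M$-partition. After finitely many such insertions, the intersection of each $N_i$ with each sub-bone is a union of congruence classes $\ccGamma\in\Gamma/\rho\Gamma$, as required by the normal definition. Finally I apply the inductive hypothesis to each side tree $\branch_{\joint}$ and $\branch_{\bone,\ccGamma}$: each becomes piecewise normal after refining its parameter set, and I pull these refinements back through the above construction, incorporating them into the $M$-partition and, when a refinement cuts $N_{\bone,\ccGamma}$ along a new linear threshold, into the joint-insertion procedure.

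The main obstacle is the interaction between the choice of $\rho$ and the insertion of new joints: a joint inserted at depth $\ell(\kappatup)$ creates sub-bones whose length functions are $\ell(\kappatup)-\depth(\joint)(\kappatup)$ and $\depth(\joint')(\kappatup)-\ell(\kappatup)$, and these must themselves be constant modulo $\rho$. I therefore need to list all linear functions arising from the $N_i$ \emph{and} from the inductively produced side-tree normalizations before choosing the partition of $M$, so that a single sufficiently divisible $\rho$ and a single finite refinement of $M$ serve all requirements uniformly, and the process terminates after finitely many joint insertions.
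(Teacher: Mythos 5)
Your proposal is correct and takes essentially the same route as the paper: induction on $d$, Lemma~\ref{lem:cellGamma} to realize arbitrary cuts of $N_{\bone}$ by cuts of $M$, bone subdivisions, and congruence classes, then an lcm of moduli together with a further partition of $M$ by residues of bone lengths. The one structural difference is order: the paper applies the induction hypothesis to the side trees \emph{first} (absorbing their piecewise-ness into the arbitrary cuts permitted by the generalized definition, so they become genuinely normal), and only then normalizes the skeleton, which cleanly decouples the two steps and avoids the $\rho$-versus-joint-insertion interaction you flag as the ``main obstacle''. Your workaround --- enumerating in advance all linear functions arising both from the $N_i$ and from the side-tree normalizations before fixing $\rho$ --- does resolve that interaction, but the paper's ordering makes the circularity a non-issue from the start.
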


\begin{proof}[Proof of Lemma~\ref{lem:genTreeParam}]
We use induction over the level. For $d = 0$, the statement is trivial.

Suppose now $\tree$ is piecewise a generalized level $d\ge 1$ tree.
We have to show that $\tree$ is also piecewise a normal level $d$ tree.
It is clear that for generalized trees, it does not make any
difference whether we allow
the side trees to be piecewise or not, so using the
induction hypothesis, we may suppose the side trees of $\tree$ to be ungeneralized of
level $d - 1$.

Now consider a bone $\bone$ of $\tree$
and the corresponding decomposition
of the set $N_{\bone}$ into definable subsets $N_i$ (defined in \req{defNe} above).
When working with ungeneralized trees, we are a priori only allowed to decompose
$N_{\bone}$ into sets of the form $N_{\bone} \cap (M \times \ccGamma)$
for $\ccGamma \in \Gamma/\rho\Gamma$.
But modifications of the tree
also permit us to do some other cuts: as we are working with piecewise
trees, we may intersect $N_{\bone}$ with sets of the form $M' \times \Gamma$ for
$M' \subset M$ definable, and moreover, we may 
cut the bone $\bone$ into several bones, thus intersecting $N_{\bone}$
with sets of the form $\{(\kappatup, \lambda) \mid \lambda \lesseqqgtr
\ell(\kappatup)\}$.
By Lemma~\ref{lem:cellGamma} any definable subset of $N_{\bone}$ may
be obtained in this way, if arbitrary $\rho$ are allowed.

It remains to deal with the requirement to have
one single $\rho$ for the whole tree, and that the
lengths of the bones have to be constant modulo $\rho$. But we may use the
least common multiple of all $\rho$ we need and then further cut $M$
into definable subsets according to the congruence classes of the
lengths of bones.
\end{proof}

In this subsection, we introduced a lot of different kinds of
trees of level $d$. In the remainder of the article, we will
only use normal and generalized piecewise ones.
Having Lemma~\ref{lem:genTreeParam} in mind, 
generalized piecewise trees will be just called piecewise trees.

\subsection{First properties of level $d$ trees}
\label{subsect:firstProp}

To familiarize with level $d$ trees, let us verify the
following simple lemmas.
\begin{lem}\label{lem:firstProp}
\begin{enumerate}
\item\label{it:fp0}
An unparametrized level $0$ tree consists of a finite tree
with finitely many infinite paths attached to it.
\item\label{it:m+1}
Any (piecewise or not) level $d$ tree is also a
(piecewise or not) level $d+1$ tree.
%\item
%Whether a parametrized level $d$ tree is empty does
%not depend on the parameters.
\item\label{it:Zpn}
If $\tree$ is a level $d$ tree, then $\Tr(\Zp) \times \tree$
is a level $d+1$ tree. In particular, $\Tr(\Zp^n)$ is a level
$n$ tree.
\item\label{it:glue}
Suppose that $\tree_1, \tree_2\colon M \to \Trees$ are parametrized trees
defined on the same set,
that $\tree_1$ is of level $d$ and that
$\tree_2$ is piecewise of level $d$. Suppose moreover that
$\joint$ is a real joint of $\tree_1$ and that $\tree_2(\kappatup) \ne \emptyset$
for any $\kappatup \in M$. Let $\tree(\kappatup)$ be the tree one gets by
attaching $\tree_2(\kappatup)$ to $\tree_1(\kappatup)$ at $\joint(\kappatup)$.
Then $\tree$ is piecewise of level $d$.
\end{enumerate}
\end{lem}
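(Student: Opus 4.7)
My plan is to dispatch \rit{fp0}, \rit{m+1}, and \rit{Zpn} essentially by unpacking the definitions and reserve the substantive work for \rit{glue}. For \rit{fp0}, unparametrized means $M=\Gamma^0$, so each length $\ell_\bone$ is simply a value in $\Gamma_{>0}\cup\{\infty\}$; running the level-$0$ construction turns every bone ending in a real joint into a finite path and every bone of infinite length into an infinite path hanging off its parent joint, which, since $\skel$ is finite, is precisely a finite tree with finitely many infinite paths attached. For \rit{m+1}, I induct on $d$: for $d\ge 1$ the level-$d$ datum of $\tree$ contains side trees of level $d-1$, which by the induction hypothesis are also of level $d$, so the same combinatorial data already qualifies as a level-$(d+1)$ datum; the base case $d=0$ is handled by presenting the level-$0$ tree with its own skeleton as a level-$1$ tree with no attached side branches at any joint or along any bone. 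For \rit{Zpn}, the level-$(d+1)$ datum for $\Tr(\Zp)\times\tree$ has skeleton a single real joint $r$ with no bones, together with the side branch at $r$ built from $\fintree=\{w\}$ and $\tree_w:=\tree$; the construction reproduces $\Tr(\Zp)\times\tree$ verbatim, and iterating from $\Tr(\Zp^0)=\Tr(\Pt)$ (level $0$ via a datum with one bone of length $\infty$) shows that $\Tr(\Zp^n)$ is level $n$.

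For \rit{glue} I first invoke Lemma~\ref{lem:genTreeParam} so that I may equivalently build a generalized piecewise level-$d$ datum for $\tree$. Refining $M$ along the partition that witnesses $\tree_2$'s piecewise level-$d$ structure, I reduce to the case in which both $\tree_1\auf{M'}$ and $\tree_2\auf{M'}$ are generalized level-$d$ on a fixed piece $M'$. On $M'$ I assemble the combined datum as follows: the new skeleton is $\skel_1$ with $\skel_2$ glued in by identifying the root of $\skel_2$ with $\joint$; bone lengths are inherited from the two originals (with the depth variable along $\skel_2$-bones shifted by $\depth(\joint)$); every side branch at every joint other than $\joint$ and every side branch along every bone is carried over unchanged; and at $\joint$ the two side branches --- the one $\tree_1$ attaches at $\joint$ and the one $\tree_2$ attaches at its root --- are merged into a single side branch by identifying the roots of $\fintree^{(1)}$ and $\fintree^{(2)}$ and retaining the $\Tr(\Zp)\times\tree_w$ attachments at every original leaf.

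The main obstacle is precisely this side-branch merge at $\joint$: one has to verify both that the merged object is a legal side-branch datum of level $d-1$ and that the resulting tree at $\joint$ matches $\tree_1$'s and $\tree_2$'s contributions combined node-for-node. The proposed merge does give a non-empty finite tree whose leaves are the disjoint union of the leaves of $\fintree^{(1)}$ and $\fintree^{(2)}$ with their original side trees attached, so the datum is well-formed. Working inside the generalized piecewise formalism is essential here: the constancy of bone lengths modulo a single $\rho$ and the uniform congruence-class cutting of $N_\bone$ would both fail under a naive combination of two unrelated level-$d$ data, and Lemma~\ref{lem:genTreeParam} is what lets me pay that bill piece-by-piece and then convert back to an ordinary piecewise level-$d$ tree.
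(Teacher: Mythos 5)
Parts \rit{fp0}--\rit{Zpn} match the paper's proof; in particular, the datum you describe for \rit{Zpn} is exactly the one the paper uses, and the reduction of \rit{m+1} to the case $d=0$ is the paper's.

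For \rit{glue} the overall strategy---glue $\skel_1$ and $\skel_2$ at $\joint$, inherit the side branches away from $\joint$, merge the two side branches meeting at $\joint$, and do the bookkeeping in the generalized piecewise formalism of Lemma~\ref{lem:genTreeParam}---is the natural reading of the paper's ``clear.'' But the merge, which you correctly flag as the crux, is where your argument breaks. You assert that after identifying the roots of $\fintree^{(1)}$ and $\fintree^{(2)}$, the leaves of the glued finite tree are the disjoint union of the original leaves, each retaining its side tree. This fails whenever one of $\fintree^{(1)},\fintree^{(2)}$ is a singleton. If, say, $\fintree^{(1)}=\{w_1\}$ while $\fintree^{(2)}$ has more than one node, then $w_1$ is a leaf of $\fintree^{(1)}$ carrying a side tree $\tree_{w_1}$, but after the identification $w_1$ is the \emph{non-leaf} root of the merged tree, so the side-branch formalism gives no leaf at which to hang $\Tr(\Zp)\times\tree_{w_1}$; the children of $\joint$ contributed by that side branch are simply dropped. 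If both are singletons, the merged $\fintree$ is again a singleton whose unique leaf now has two competing side trees, which the definition does not permit; one must instead observe that $(\Tr(\Zp)\times\tree_{w_1})$ glued with $(\Tr(\Zp)\times\tree_{w_2})$ at the root is isomorphic to $\Tr(\Zp)\times(\tree_{w_1}$ glued with $\tree_{w_2})$ and then appeal to \rit{glue} at level $d-1$ for the inner gluing---an induction on $d$ that your write-up does not set up. The mixed case is the most delicate: a direct repair seems to need that the subtree of a \emph{parametrized} level-$(d-1)$ tree below a depth-$1$ node is again piecewise of level $d-1$, a parametrized analogue of Lemma~\ref{lem:subTree} that the paper does not supply (and whose unparametrized version itself invokes \rit{Zpn}). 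So the merge step requires a genuine argument---an induction on $d$ that treats the singleton configurations explicitly, or a reformulated notion of side-branch datum that is visibly closed under root-gluing---rather than the bare assertion that the glued datum is well-formed.
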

\begin{proof}
(1)
Clear.

(2)
By induction, it is enough to verify this for $d = 0$. A level $0$ tree
is a level $1$ tree with side branches consisting only of the root.

(3)
Let the skeleton of $\Tr(\Zp) \times \tree$ consist only of the root,
let the finite tree $\fintree$ in the side branch at the root also consist
only of the root, and attach $\Tr(\Zp) \times \tree$ to the only leaf of
$\fintree$.

(4)
Clear (using generalized level $d$ trees).
\end{proof}

\begin{lem}\label{lem:subTree}
Let $\tree$ be an unparametrized tree of level $d$
and let $v$ be any node of $\tree$. Then the subtree of $\tree$
below $v$ is of level $d$.
\end{lem}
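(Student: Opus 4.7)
The plan is to proceed by induction on $d$, with a case analysis according to where $v$ sits in $\tree$. The base case $d=0$ is immediate from Lemma~\ref{lem:firstProp}\rit{fp0}: an unparametrized level $0$ tree is a finite tree with finitely many infinite paths attached, and the subtree below any node is again of this shape.

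For the inductive step, assume the lemma for level $d-1$, and fix a level $d$ tree $\tree$ with skeleton $\tree_0$, side branches $\branch_\joint$ at real joints, and side branches $\branch_{\bone,\ccGamma}$ along bones (with modulus $\rho$). I would distinguish three cases according to where $v$ lies. (i) If $v$ lies on the skeleton $\tree_0$: when $v$ is a real joint $\joint_0$, take the new skeleton to be the subtree of $\tree_0$ below $\joint_0$, keep $\branch_{\joint_0}$ as the side branch at the new root, and inherit all side branch data below. When $v$ is interior to a bone $\bone=(\joint,\joint')$ at depth $\mu$, declare $v$ a new real joint, truncate $\bone$ to a bone from $v$ to $\joint'$ (possibly infinite), attach $\branch_{\bone,\ccGamma}(\mu)$ as side branch at the new root (for the congruence class $\ccGamma$ of $\mu$), and preserve the remaining side branch data. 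In the unparametrized setting the constraints on bone lengths modulo $\rho$ are vacuous, so this really produces a valid level $d$ datum (and one may in any case freely appeal to Lemma~\ref{lem:genTreeParam} to get back to a normal datum). (ii) If $v$ lies in the finite tree $\fintree$ of some side branch $\branch$, then the subtree below $v$ is itself a side branch of level $d-1$, with finite part equal to the subtree of $\fintree$ below $v$ and the level $d-1$ trees $\tree_w$ at its leaves inherited from $\branch$. Such a side branch is a level $d$ tree via the datum whose skeleton is a single real joint (the root) with this side branch attached. (iii) If $v$ lies in $\Tr(\Zp)\times\tree_w$ for some leaf $w$ of $\fintree$, decompose $v=(v_1,v_2)$; the subtree below $v$ is then $\Tr(B)\times\tree_w^{(v_2)}$, where $B$ is the ball at $v_1$ and $\tree_w^{(v_2)}$ is the subtree of $\tree_w$ below $v_2$. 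We have $\Tr(B)\cong\Tr(\Zp)$, and by the inductive hypothesis $\tree_w^{(v_2)}$ is of level $d-1$, so Lemma~\ref{lem:firstProp}\rit{Zpn} delivers level $d$.

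I expect case (iii) to be the conceptual heart, as it is where the inductive hypothesis is actually invoked, but case (i) is the one that requires the most bookkeeping: one must describe the new tree datum explicitly and check that all components of the original datum can be reused or restricted coherently. The only potential annoyance is the mod $\rho$ condition on bone lengths when $v$ is an interior bone node, but this is harmless because we are unparametrized, or equivalently because Lemma~\ref{lem:genTreeParam} allows us to work with generalized level $d$ trees throughout and convert back at the end.
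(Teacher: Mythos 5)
Your proof is correct and follows essentially the same approach as the paper's: a case analysis on where $v$ sits (on the skeleton, in the finite tree $\fintree$ of a side branch, or inside some $\Tr(\Zp)\times\tree'$), with induction on the level invoked only in the last case and Lemma~\ref{lem:firstProp}~\rit{Zpn} used to return to level $d$. The paper's proof simply dismisses the first two cases as ``easy'' and spells out only the third; your write-up supplies the bookkeeping for cases (i) and (ii) that the paper omits, correctly noting that the mod-$\rho$ constraint is vacuous in the unparametrized setting (or can be absorbed via Lemma~\ref{lem:genTreeParam}).
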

\begin{proof}
If $v$ lies on the skeleton or on the
finite tree at the beginning of a side branch, then this is easy.
If $v$ lies in $\Tr(\Zp) \times \tree'(\lambda)$
for some side tree $\tree'$ and some
$\lambda \in \Gamma$, then $\tree'(\lambda)$ is of level
$d - 1$ as an unparametrized tree. By induction,
the subtree of $\tree'(\lambda)$ starting at the image of $v$
is of level $d - 1$, hence the subtree starting at $v$ is of level
$d$ by Lemma~\ref{lem:firstProp} \rit{Zpn}.
\end{proof}

It is now easy to see that it suffices
to understand trees of affine schemes to get trees of arbitrary schemes.

\begin{prop}\label{prop:affin}
Let $V$ be an arbitrary scheme of finite type, and suppose that $V$
has an affine
covering $(V_i)_{i \in I}$ such that each $\Tr(V_i)$ is of level $d$.
Then $\Tr(V)$ is of level $d$.
\end{prop}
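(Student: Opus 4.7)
The plan is to reduce the statement to the two structural results already at our disposal: Lemma~\ref{lem:affin}, which transfers subtrees of $\Tr(V)$ into subtrees of the $\Tr(V_i)$, together with Lemma~\ref{lem:firstProp}, which permits gluing level $d$ trees together. The proof is then essentially an assembly argument.

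First I would note that the root of $\Tr(V)$ has only finitely many children. Since $V$ is of finite type over $\Zp$, the special fiber $V\times_{\Zp}\bbF_p$ is a scheme of finite type over $\bbF_p$, so $V(\bbF_p)$ is finite; the children of the root of $\Tr(V)$ form a subset of $V(\bbF_p)$. Label them $v_1, \dots, v_k$ and let $\tree_j$ denote the subtree of $\Tr(V)$ rooted at $v_j$. Next, Lemma~\ref{lem:affin} applied to each $v_j$ yields an index $i(j) \in I$ and a child $v'_j$ of the root of $\Tr(V_{i(j)})$ such that $\tree_j$ is isomorphic to the subtree of $\Tr(V_{i(j)})$ below $v'_j$. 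Since $\Tr(V_{i(j)})$ is of level $d$ by hypothesis, Lemma~\ref{lem:subTree} yields that $\tree_j$ is of level $d$ as well.

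To finish, I would assemble $\Tr(V)$ by gluing. Take the finite tree $\skel$ consisting of the root of $\Tr(V)$ together with its children $v_1, \dots, v_k$, and view it as the skeleton of an unparametrized level $0$ tree with $k$ bones of length $1$ each ending at a real joint $v_j$; by Lemma~\ref{lem:firstProp} \rit{m+1} this is already a level $d$ tree. Iterated application of Lemma~\ref{lem:firstProp} \rit{glue}, attaching $\tree_j$ to the real joint $v_j$ for each $j$, produces a piecewise level $d$ tree isomorphic to $\Tr(V)$. As this tree is unparametrized, the qualifier ``piecewise'' is vacuous (a partition of a singleton is trivial), so $\Tr(V)$ is of level $d$.

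No serious obstacle is expected; the main thing to monitor is that the $v_j$ appear as \emph{real} joints (rather than virtual ones), so that Lemma~\ref{lem:firstProp} \rit{glue} is actually applicable. This is automatic in the construction above because the bones from the root down to the $v_j$ all have finite length $1$.
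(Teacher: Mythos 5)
Your proof is correct and follows essentially the same approach as the paper's: apply Lemma~\ref{lem:affin} to match each subtree of $\Tr(V)$ below a child of the root with a subtree of some $\Tr(V_i)$, conclude by Lemma~\ref{lem:subTree} that each such subtree is of level $d$, and then reassemble via Lemma~\ref{lem:firstProp}~\rit{glue}. The paper's proof is a one-line reference to these same three lemmas; your version supplies the finiteness of the set of root children and the explicit star-shaped skeleton used in the gluing, which are the details the paper leaves implicit. Your observation that ``piecewise'' is vacuous for unparametrized trees is exactly what makes the iterated gluing close up, so no gap remains.
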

\begin{proof}
Use Lemma~\ref{lem:affin}, Lemma~\ref{lem:subTree}
(applied to the children of the roots of the trees $\Tr(X_i)$)
and Lemma~\ref{lem:firstProp} \rit{glue}.
\end{proof}

We conclude this subsection by a lemma which enables us to
decompose the computation of a tree into separate computations
on a cheese and its holes.

\begin{lem}\label{lem:glueCheese}
Suppose we have, for each $\kappatup$ in some definable set
$M \subset \Gamma^{m}$,
a set $X_{\kappatup} \subset \Zp^n$ and a cheese 
$S_{\kappatup} := \Zp^n \ohne \bigcup_{i \in I} B_{\kappatup,i}$,
where the index set $I$ does not depend on $\kappatup$.
Suppose moreover that the following holds:
\begin{enumerate}
\item
$\kappatup \mapsto \Tr_{S_{\kappatup}}(X_{\kappatup})$ is of level
$d$.
\item
For each $i \in I$, $\kappatup \mapsto \Tr_{B_{\kappatup,i}}(X_{\kappatup})$
is piecewise of level $d$.
\item
For each $i \in I$,
there is a joint $\joint_i$ of $\kappatup \mapsto \Tr_{S_{\kappatup}}(X_{\kappatup})$
such that $\joint_i(\kappatup) = B_{\kappatup,i}$ for all $\kappatup \in M$.
\end{enumerate}
Then the whole tree $\kappatup \mapsto \Tr(X_{\kappatup})$
is piecewise of level $d$.
\end{lem}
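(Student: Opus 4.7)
The plan is to reduce the lemma to a finite iteration of the glue lemma, Lemma~\ref{lem:firstProp}\rit{glue}. The basic observation (noted just after the definition of $\Tr_S(X)$) is that the total tree $\Tr(X_\kappatup)$ is obtained from the cheese tree $\Tr_{S_\kappatup}(X_\kappatup)$ by attaching, for each $i \in I$, the hole tree $\Tr_{B_{\kappatup,i}}(X_\kappatup)$ at the node $B_{\kappatup,i}$. By hypothesis~(iii), this node is precisely the real joint $\joint_i(\kappatup)$ of the cheese tree (and being a real joint forces $B_{\kappatup,i} \cap X_\kappatup \neq \emptyset$, so the hole tree we attach is non-empty).

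Concretely, I would proceed as follows. Enumerate $I = \{i_1, \dots, i_r\}$ (finite by definition of a cheese). Start with $\tree^{(0)} := \kappatup \mapsto \Tr_{S_\kappatup}(X_\kappatup)$, which is of level $d$ by hypothesis~(i). Inductively, build $\tree^{(k)}$ from $\tree^{(k-1)}$ by attaching the piecewise level-$d$ tree $\kappatup \mapsto \Tr_{B_{\kappatup,i_k}}(X_\kappatup)$ at the joint $\joint_{i_k}$. At each step, Lemma~\ref{lem:firstProp}\rit{glue} yields that $\tree^{(k)}$ is piecewise of level $d$. After $r$ steps, the resulting tree is exactly $\kappatup \mapsto \Tr(X_\kappatup)$, finishing the proof.

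The only point that needs a quick check is that the joint $\joint_{i_k}$ survives as a real joint in the intermediate tree $\tree^{(k-1)}$, since Lemma~\ref{lem:firstProp}\rit{glue} requires attaching at a real joint. This is not really an obstacle: the attachments performed in earlier steps occurred at the distinct joints $\joint_{i_1}, \dots, \joint_{i_{k-1}}$, none of which coincide with $\joint_{i_k}$ (they correspond to pairwise disjoint balls), and each such attachment only adds new nodes strictly below the attachment point. Thus the finite skeleton of $\tree^{(k-1)}$ still contains $\joint_{i_k}$ as a real joint, so the induction step is legitimate. Once the glue lemma has been applied $r$ times, the conclusion follows, and the ``piecewise'' qualifier is absorbed in the usual way by Lemma~\ref{lem:genTreeParam}.
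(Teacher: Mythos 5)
Your proof is correct and follows essentially the same route as the paper: observe (using condition (iii)) that each $X_{\kappatup} \cap B_{\kappatup,i}$ is non-empty, so $\Tr(X_{\kappatup})$ decomposes as $\Tr_{S_{\kappatup}}(X_{\kappatup})$ with the hole trees $\Tr_{B_{\kappatup,i}}(X_{\kappatup})$ attached at the joints $\joint_i(\kappatup)$, and then invoke Lemma~\ref{lem:firstProp}~\rit{glue}. The paper states this in one sentence and tacitly leaves the iteration over the finitely many $i \in I$ implicit; you spell the iteration out and verify the minor point that the joints $\joint_{i_k}$ survive earlier attachments. That is a fair and useful elaboration, not a departure in method.
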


\begin{proof}
The third condition in particular implies $X_{\kappatup} \cap B_{\kappatup,i}
\ne \emptyset$, so
$\Tr(X_{\kappatup})$ consists of
$\Tr_{S_{\kappatup}}(X_{\kappatup})$ with $\Tr_{B_{\kappatup,i}}(X_{\kappatup})$ attached to
it at the node $B_{\kappatup,i}$ for each $i \in I$. Now use
Lemma~\ref{lem:firstProp} \rit{glue}.
\end{proof}

\section{Results on trees of level $d$}
\label{sect:results}

\subsection{Rationality of the Poincar\'e series}
\label{subsect:poincare}

In the introduction we promised that level $d$ trees would
have rational Poincar\'e series. Let us now make this precise
and verify it.

\begin{defn}
Suppose $\tree$ is a tree which has only finitely many
nodes at each depth. Then we define the \emph{Poincar\'e series of $\tree$} as follows:
\[
P_{\tree}(\sv) := \sum_{\lambda = 0}^\infty \#\{v \in \tree \mid \depth(v) = \lambda\} \cdot \sv^\lambda
\in \bbZ[[\sv]].
\]
\end{defn}

\begin{prop}\label{prop:rat}
Let $\tree$ be a level $d$ tree. Then $P_{\tree}(\sv) \in \bbQ(\sv)$.
\end{prop}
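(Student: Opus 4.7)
The plan is to strengthen Proposition~\ref{prop:rat} to a parametrized statement that is amenable to induction on $d$: for any parametrized level $d$ tree $\tree\colon M \to \Trees$ with $M \subset \Gamma^m$ definable and any linear $\alpha\colon M \to \Gamma$ for which the formal sum
\[
P(\sv) := \sum_{\kappatup \in M} \sv^{\alpha(\kappatup)} P_{\tree(\kappatup)}(\sv)
\]
is well defined (only finitely many $\kappatup$ contribute to each power of $\sv$), we have $P(\sv) \in \bbQ(\sv)$; taking $M$ to be a one-point set recovers the proposition. The key auxiliary fact, which I would record up front, is that for any definable $M' \subset \Gamma^{m'}$ and any linear $\gamma\colon M' \to \Gamma$, the sum $\sum_{\kappatup \in M'} \sv^{\gamma(\kappatup)}$ lies in $\bbQ(\sv)$ whenever it converges formally; this follows directly from Lemma~\ref{lem:cellGamma}, which reduces $M'$ to cells on which $\gamma$ is affine and $M'$ is a product of bounded intervals and full arithmetic progressions, where elementary geometric series suffice.

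The base case $d = 0$ is then immediate from the definition: $P_{\tree(\kappatup)}(\sv)$ equals a sum of $\sv^{\depth(\joint)(\kappatup)}$ over real joints plus a term $\sv^{\depth(\joint)(\kappatup)}/(1-\sv)$ for each virtual joint, and every depth is linear in $\kappatup$. Multiplying by $\sv^{\alpha(\kappatup)}$ and summing over $\kappatup \in M$ reduces to the auxiliary fact.

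For the induction step, assuming the claim at level $d$, let $\tree$ be a level $d+1$ tree parametrized by $M$. Expanding $P_{\tree(\kappatup)}(\sv)$ according to the skeleton-plus-side-branches decomposition yields three kinds of contributions. The skeleton contribution is handled by the base case. For each joint $\joint$, the side branch $\branch_{\joint}(\kappatup)$ has Poincar\'e series
\[
P_{\fintree}(\sv) + \sum_{w \text{ leaf of } \fintree} \sv^{\depth_{\fintree}(w)}\bigl(P_{\tree_w(\kappatup)}(p\sv) - 1\bigr),
\]
using the identity $P_{\Tr(\Zp) \times \tree'}(\sv) = P_{\tree'}(p\sv)$; shifted by $\sv^{\alpha(\kappatup) + \depth(\joint)(\kappatup)}$ and summed over $\kappatup \in M$, each inner term falls under the inductive hypothesis applied to the level $d$ side tree $\tree_w$ on $M$ with the obvious linear shift. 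For each bone $\bone$ and congruence class $\ccGamma$, the analogous side branch is attached at each depth $\lambda$ with $(\kappatup, \lambda) \in N_{\bone,\ccGamma}$; here the extra parameter $\lambda$ enlarges the parameter space, and the resulting contribution falls under the inductive hypothesis applied to the side tree $\tree_w$, now regarded as a parametrized tree on $N_{\bone,\ccGamma} \subset \Gamma^{m+1}$ with linear shift $\alpha(\kappatup) + \lambda + \depth_{\fintree}(w)$.

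The main obstacle is the parameter bookkeeping when descending recursively through bones: each bone traversal adds one dimension to the parameter space, and at every step one must verify that the linear structure of the depth-shift and the formal convergence of the Poincar\'e series are preserved. Beyond this, the proof is essentially a mechanical unfolding of the recursive definition of level $d$ trees.
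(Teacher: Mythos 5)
Your strategy---strengthening to a parametrized statement and inducting on the level $d$, reducing to Presburger summation---is the same as the paper's, and the joint side-branch expansion you write is correct as far as it goes. However, your base case $d=0$ is wrong as stated: you count only the joints, but a level-$0$ tree also has $\ell_{\bone}(\kappatup)-1$ intermediate nodes on each finite bone, and the depths of the nodes on an infinite tail are not $\depth(\joint)$ (that value is $\infty$ for a virtual joint). To count those nodes you would sum $\sv^\lambda$ over the positions $\lambda$ on each bone, introducing a fresh variable over a definable set---which is exactly the maneuver the induction step already requires. The paper avoids this by starting the induction at $d=-1$ (trees consisting only of the root) and regarding a level-$0$ tree as having trivial level-$(-1)$ side branches, so bone nodes are handled uniformly by the inductive step rather than separately in the base case.

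The more serious gap is in the claim that ``each inner term falls under the inductive hypothesis.'' Your hypothesis gives rationality of $\sum_{\kappatup}\sv^{\alpha(\kappatup)}P_{\tree_w(\kappatup)}(\sv)$, but after applying $P_{\Tr(\Zp)\times\tree'}(\sv)=P_{\tree'}(p\sv)$, the inner term you actually need is $\sum_{\kappatup}\sv^{\alpha(\kappatup)}P_{\tree_w(\kappatup)}(p\sv)$. You cannot obtain this from the hypothesis by substituting $\sv\mapsto p\sv$: that would also replace $\sv^{\alpha(\kappatup)}$ by $(p\sv)^{\alpha(\kappatup)}$, which is not what you want. One fix is to strengthen the induction to cover $\sum_{\kappatup}\sv^{\alpha(\kappatup)}P_{\tree_w(\kappatup)}(p^k\sv)$ for every fixed $k\ge 0$, noting that $k$ increases by one each time you descend into a side tree. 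The paper's fix is cleaner: it keeps the parameters $\kappa_1,\dots,\kappa_m$ attached to separate formal variables $Y_1,\dots,Y_m$, proves rationality of the multivariate series $P_{\tree}(\sv,Y_1,\dots,Y_m)$, and specializes $Y_m=\sv$ only when descending into a bone; that way the replacement $\sv\mapsto p\sv$ coming from the $\Tr(\Zp)\times{-}$ factor never touches the parameter variables. Without one of these repairs, your induction does not close.
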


The main ingredient to the proof of this proposition is the following lemma:
\begin{lem}\label{lem:rat}
Suppose $M \subset \Gamma^{m}$ is a definable set contained in
$\Gamma_{\ge 0}^{m}$. Then the series
\[
\sum_{(\kappa_1, \dots, \kappa_m) \in M}\!\!\!\!\!\!\!\!
Y_1^{\kappa_1} \cdots Y_m^{\kappa_m}
\in \bbZ[[Y_1, \dots, Y_m]]
\]
is rational in $Y_1, \dots, Y_m$.
\end{lem}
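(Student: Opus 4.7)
\emph{Plan.} I would prove this by induction on $m$, at each step peeling off the last coordinate via cell decomposition (Lemma~\ref{lem:cellGamma}) and then invoking a substitution of variables to reduce to the previous case. The base case $m=0$ is immediate: the sum is $0$ or $1$.

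For the inductive step, view a definable $M\subset\Gamma_{\ge 0}^{m+1}$ as a definable subset of $\Gamma^m\times\Gamma$ and apply Lemma~\ref{lem:cellGamma}\rit{cellGamma} together with inclusion--exclusion to partition $M$ into finitely many definable cells
\[
M_j \;=\; \{(\kappatup,\lambda) : \kappatup\in M_j',\ \ell_{j,1}(\kappatup)\le\lambda\le\ell_{j,2}(\kappatup),\ \lambda\equiv r_j\pmod{\rho_j}\},
\]
with $M_j'\subset\Gamma_{\ge 0}^m$ definable, $\ell_{j,1},\ell_{j,2}$ linear (either bound possibly absent) and $\rho_j\in\Gamma_{>0}$. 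I would then refine each $M_j'$ by partitioning its coordinates into residue classes modulo a common denominator of the coefficients of $\ell_{j,1},\ell_{j,2}$ and of $\rho_j$; on each refined piece, $\ell_{j,k}$ becomes a linear function with integer coefficients in the new coordinates, and $\ell_{j,k}(\kappatup)\bmod\rho_j$ is constant.

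On such a refined cell the least and greatest admissible values of $\lambda$ are integer-coefficient linear functions $A_j(\kappatup),B_j(\kappatup)$, so the inner sum collapses to the geometric series
\[
\sum_\lambda Y_{m+1}^\lambda \;=\; \frac{Y_{m+1}^{A_j(\kappatup)}-Y_{m+1}^{B_j(\kappatup)+\rho_j}}{1-Y_{m+1}^{\rho_j}}
\]
(with the $B_j$-term dropped if $\ell_{j,2}$ is absent). Writing $A_j(\kappatup)=\sum_i a_i\kappa_i+\beta$ with $a_i,\beta\in\bbZ$ gives $Y_{m+1}^{A_j(\kappatup)}=Y_{m+1}^\beta\prod_i(Y_{m+1}^{a_i})^{\kappa_i}$, and similarly for $B_j$. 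The contribution of $M_j$ to the total series thus becomes a finite combination of expressions of the form
\[
C(Y_{m+1})\cdot\sum_{\kappatup\in M_j'}\prod_{i=1}^m\bigl(Y_iY_{m+1}^{a_i}\bigr)^{\kappa_i}
\]
with $C(Y_{m+1})\in\bbQ(Y_{m+1})$. Applying the induction hypothesis to $M_j'$ in the substituted variables $Y_i':=Y_iY_{m+1}^{a_i}$ shows that each such inner sum is rational in $Y_1',\dots,Y_m'$, hence rational in $Y_1,\dots,Y_{m+1}$; summing over the finitely many cells $M_j$ closes the induction.

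The main subtlety is ensuring \emph{integrality} of the linear coefficients: Lemma~\ref{lem:cellGamma} guarantees only that they lie in $\bbQ$, whereas the substitution $Y_i'=Y_iY_{m+1}^{a_i}$ and the appeal to the induction hypothesis both require $a_i\in\bbZ$. This is handled by the coordinate-residue refinement described above, which keeps everything in the class of definable sets and hence fits inside the induction. Everything else amounts to iterated geometric summation with bookkeeping of the residue conditions.
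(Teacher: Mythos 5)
Your strategy is the same as the paper's (which cites the result as a simplified version of a theorem of Cluckers--Loeser and gives only a compressed sketch): cell-decompose in $\Gamma^m$, sum geometric series, induct on $m$. The paper reduces in one step to iterated sums of the form $\sum_{\kappa_1=0}^{\beta_1}\cdots\sum_{\kappa_m=0}^{\beta_m(\kappa_1,\dots,\kappa_{m-1})}Y_1^{\ell_1(\kappa_1)}\cdots Y_m^{\ell_m(\kappa_m)}$ and then iterates the geometric-series formula; you instead peel off the last coordinate, perform its geometric sum, and push the resulting linear coupling into the earlier variables via the monomial substitution $Y_i\mapsto Y_iY_{m+1}^{a_i}$ before invoking the induction hypothesis. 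These are two organizations of essentially the same computation, and your explicit attention to integrality of the linear coefficients (refining by residues to clear denominators) is a real point the paper buries in the phrase ``further refining the cells''.

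There is one step that needs more care. The refined lower bound $A_j(\kappatup)=\beta+\sum_ia_i\kappa_i$ is $\ge 0$ on the cell, but individual coefficients $a_i$ can perfectly well be negative (e.g.\ $A_j(\kappa_1)=5-\kappa_1$ on the cell $0\le\kappa_1\le 5$). Then $Y_i':=Y_iY_{m+1}^{a_i}$ is not an element of $\bbZ[[Y_1,\dots,Y_{m+1}]]$, so the substitution does not define a map $\bbZ[[Y_1',\dots,Y_m']]\to\bbZ[[Y_1,\dots,Y_{m+1}]]$, and ``applying the induction hypothesis in the substituted variables'' does not by itself transport the power-series identity (only the fact that the substituted rational function is again rational). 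The repair is standard but should be stated: work in $\bbZ((Y_{m+1}))[[Y_1,\dots,Y_m]]$. There the substitution is a ring homomorphism, and the denominators $1-(\text{nontrivial monomial in }Y_1',\dots,Y_m')$ produced by the induction hypothesis remain units, since each such monomial has positive degree in some $Y_i$ with $i\le m$. After multiplying by $Y_{m+1}^{\beta}$ (and by the analogous factor for the upper-bound term) and summing over all cells, the result lies back in $\bbZ[[Y_1,\dots,Y_{m+1}]]$ precisely because $A_j\ge 0$, and it equals a rational function of $Y_1,\dots,Y_{m+1}$. With that point made explicit, the argument is complete.
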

This is, for example, a simplified version of
Theorem~4.4.1 of \cite{CL:mot}.
\begin{proof}[Sketch of proof]
Using cell decomposition in $\Gamma^{m}$ and by further refining the cells,
one reduces the statement to sums of the form
\[
\sum_{\kappa_1 = 0}^{\beta_1}
\sum_{\kappa_2 = 0}^{\beta_2(\kappa_1)}
\dots
\sum_{\kappa_m = 0}^{\beta_m(\kappa_1,\dots,\kappa_{m-1})}
Y_1^{\ell_1(\kappa_1)}
\dots
Y_m^{\ell_m(\kappa_m)}
\]
where the $\ell_i$ are linear and non-constant, the $\beta_i$
are linear or $\infty$, and $\beta_i(\kappa_1, \dots, \kappa_{i-1}) \ge 0$
for all appearing tuples $(\kappa_1, \dots, \kappa_{i-1})$.
Now use inductively that geometric series are rational.
\end{proof}

\begin{proof}[Proof of Proposition~\ref{prop:rat}]
We inductively prove the following parametrized version of the proposition.
Let $M \subset \Gamma_{\ge 0}^{m}$ be a definable set and
let $\tree \colon M \to \Trees$ be a parametrized level $d$ tree.
Then the series
\begin{equation}\label{eq:paramRat}
P_{\tree}(\sv, Y_1, \dots, Y_m) :=
\!\!\!\!\!\!\!\!\sum_{(\kappa_1, \dots, \kappa_m) \in M}\!\!\!\!\!\!\!\!
P_{\tree(\kappatup)}(\sv)\cdot
Y_1^{\kappa_1} \cdots Y_m^{\kappa_m}
\in \bbZ[[\sv, Y_1, \dots, Y_m]]
\end{equation}
is rational in $\sv, Y_1, \dots, Y_m$.
Note that the condition $M \subset \Gamma_{\ge 0}^{m}$ is satisfied for
iterated side trees of level $d$ trees.

If we define a level $-1$ tree to be one consisting only of the root,
then we may view a level $0$ tree as one having side branches
of level $-1$ (and where additionally the finite trees $\fintree$
at the beginning of the side branches consist only of the root). Adopting this point
of view, we start our induction at $d = -1$.

If $d= -1$, then $P_{\tree(\kappatup)}(\sv) = 1$ for all $\kappatup \in M$,
and Equation~\req{paramRat} is just Lemma~\ref{lem:rat}.

If $\tree'(\kappatup) \cong \Tr(\Zp) \times \tree(\kappatup)$, then
$P_{\tree'}(\sv, Y_1, \dots, Y_m) = P_{\tree}(p\sv, Y_1, \dots, Y_m)$.
Using this, rationality of level $d$ trees implies rationality
of level $d$ side branches.

Now consider a level $d+1$ tree $\tree$ defined on
$M \subset \Gamma_{\ge 0}^{m - 1}$.
We may treat each joint and each bone separately. Moreover,
on each bone we may treat the different congruence classes modulo $\rho$
separately. The total Poincar\'e series $P_{\tree}(\sv, Y_1, \dots, Y_{m-1})$
is then the sum of all these parts.

Consider a bone $\bone = (\joint, \joint')$ and a congruence class
$\ccGamma \in \Gamma/\rho\Gamma$. Let $\branch$ be the tree in $m$
parameters describing the side branches at nodes on $\bone$ with depth in
$\ccGamma$. The contribution of these side branches, including the corresponding
nodes on $\bone$ themselves, is
$P_{\branch}(\sv, Y_1, \dots, Y_{m - 1}, \sv)$.

Finally consider a (real) joint $\joint$ with side branch $\branch$.
We
define $M' := \{(\kappatup,\depth(\joint)(\kappatup)) \mid \kappatup \in M)\}$
and apply the induction hypothesis to the ``shifted'' tree
\[
\branch'\colon M' \to \Trees, (\kappatup, \lambda) \mapsto \branch(\kappatup)
.
\]
The contribution of $\joint$ and its side branch
is $P_{\branch'}(\sv, Y_1, \dots, Y_{m - 1}, \sv)$.
\end{proof}

\subsection{Any level $d$ tree appears}

We now prove Theorem~\ref{thm:inv}: any tree of level $d$ without leaves
is isomorphic to a tree of a definable set of dimension $d$.
We introduce some additional notation only for this subsection.
The coordinates of any $m$-tuple $\atup$ will be denoted by $a_1, \dots, a_m$.
Moreover, for $\xtup \in \Qp^m$ we will set
$\vtup(\xtup) := (v(x_1), \dots, v(x_m))$
(in contrast to $v(\xtup) = \min_i v(x_i)$).

The main ingredient to the proof is the following lemma.

\begin{lem}\label{lem:invFkt}
Suppose $M \subset \Gamma_{\ge0}^m$ is definable and 
$\ell\colon M \to \Gamma_{\ge0}$ is a linear function satisfying 
$\ell(\kappatup) \ge \kappa_i$ for each $i \le m$.
Define $X := \{\xtup \in \Zp^m \mid \vtup(\xtup) \in M\}$.
Then there exists a definable function
$\pow_\ifu\colon X \to \Zp$ with the following properties:
\begin{enumerate}
\item
$v(\pow_\ifu(\xtup)) = \ell(\vtup(\xtup))$ for any $\xtup \in X$, and
\item
$v(\pow_\ifu(\xtup)-\pow_\ell(\xtup')) \ge v(\xtup - \xtup')$
for any $\xtup, \xtup' \in X$ satisfying $\vtup(\xtup) = \vtup(\xtup')$.
\end{enumerate}
\end{lem}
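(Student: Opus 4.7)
The plan is to take $\pow_\ell(\xtup) := p^{\ell(\vtup(\xtup))}$, using the cross section $\lambda \mapsto p^\lambda$ fixed in the notation section. Since $\ell$ lands in $\Gamma_{\ge 0}$, the image lies in $\Zp$; condition (1) is exactly the defining property of the cross section, and condition (2) is automatic because $\pow_\ell$ factors through $\vtup$, so that $\vtup(\xtup) = \vtup(\xtup')$ forces $\pow_\ell(\xtup) = \pow_\ell(\xtup')$ and the left-hand side of (2) is $\infty$. Notice that this argument never uses the hypothesis $\ell(\kappatup) \ge \kappa_i$; it must be there for the subsequent use of $\pow_\ell$ in the proof of Theorem~\ref{thm:inv}.

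A more explicit, hypothesis-using variant is also possible. In the case where $\ell(\kappatup) = b + \sum_i a_i \kappa_i$ with all $a_i, b$ non-negative integers, the natural candidate is $\pow_\ell(\xtup) := p^b \prod_i x_i^{a_i}$; condition (1) is clear, and for (2) one expands $\prod_i x_i^{a_i} - \prod_i (x_i')^{a_i}$ by the standard telescope into $m$ summands, estimates the $j$-th via $v(x_j^{a_j} - (x_j')^{a_j}) \ge (a_j - 1)\kappa_j + v(x_j - x_j')$, and arrives at
\[
v(\pow_\ell(\xtup) - \pow_\ell(\xtup')) \ge \ell(\kappatup) - \max_j \kappa_j + v(\xtup - \xtup'),
\]
which is at least $v(\xtup - \xtup')$ precisely by the hypothesis $\ell \ge \kappa_i$. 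For rational coefficients I would then partition $M$ (by Lemma~\ref{lem:cellGamma}) according to the congruence classes of $\kappatup$ modulo a common denominator $N$ of the $a_i$ and, on each piece, rewrite $\ell$ with integer coefficients in the new variables $\kappa_i' = (\kappa_i - r_i)/N$.

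The main obstacle in this second approach is precisely the step from integer to rational coefficients: after the substitution one still needs a definable element of valuation $\kappa_i'$ expressed in terms of $x_i$ (whose valuation is $N\kappa_i' + r_i$), and producing it amounts to an $N$-th-root or cross-section-type choice that is not uniformly available in $\Zp$. For this reason I would ultimately go with the first, cross-section construction and keep the second approach mainly as confirmation that the hypothesis $\ell \ge \kappa_i$ has the expected content and will pay off downstream.
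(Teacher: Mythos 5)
Your first (cross-section) approach does not work: the map $\Gamma \to \Qp^\times$, $\lambda \mapsto p^\lambda$, is \emph{not} a definable function in the two-sorted structure, so $\pow_\ell(\xtup) := p^{\ell(\vtup(\xtup))}$ is not a definable function and the lemma demands definability. To see non-definability, note that if a cross section $\pi$ were definable, then $\pi(\Gamma) = p^\bbZ$ would be a definable subset of $\Qp$; but by cell decomposition (or Macintyre's quantifier elimination), every definable subset of $\Qp$ is the union of a finite set and an open set, whereas $p^\bbZ$ is infinite and discrete. The paper's remark that it will ``often use the cross section $\lambda \mapsto p^\lambda$'' is a notational convenience for computations with \emph{fixed} $\lambda$, not a claim that the function is definable; in particular you can use $p^\beta$ for the constant $\beta$ appearing in $\ell$, but not $p^{\ell(\kappatup)}$ with $\kappatup$ varying.

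Your second approach is in fact the right one, and is essentially what the paper does, so you should not have abandoned it. The obstacle you identified (``an $N$-th-root or cross-section-type choice that is not uniformly available in $\Zp$'') is precisely the remaining content of the lemma, and the paper resolves it as follows. Write $\ell(\kappatup) = \tfrac{1}{e}\bigl(\beta + \sum_i a_i\kappa_i\bigr)$ with $a_i, \beta \in \bbZ$, $e \ge 1$, and set $\pi(\xtup) := p^\beta\prod_i x_i^{a_i}$, which is definable and satisfies $v(\pi(\xtup)) = e\,\ell(\vtup(\xtup)) \in e\Gamma$. One does \emph{not} need to change variables to reduce to integer coefficients; one takes a single $e$-th root of $\pi(\xtup)$. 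By Lemma~\ref{lem:root}, on the set $p^{e\Gamma}\cdot(1+p^{\mu}\Zp)$ with $\mu := 1+2v(e)$ there is a definable $e$-th root landing in $p^\Gamma\cdot(1+p^{1+v(e)}\Zp)$. Choosing finitely many representatives $r_\nu \in \Zp^\times$ of $\Zp^\times/(1+p^{\mu}\Zp)$, one sets $\pow_\ell(\xtup) := \sqrt[e]{\pi(\xtup)/r_\nu}$ for the unique $\nu$ with $\pi(\xtup)/r_\nu \in p^{e\Gamma}(1+p^\mu\Zp)$; this is definable (finitely many cases) and satisfies (1). The Lipschitz estimate (2) then follows from your telescope bound for $\pi$ combined with Lemma~\ref{lem:root}(1): if $\vtup(\xtup) = \vtup(\xtup')$ then $\pi(\xtup)$ and $\pi(\xtup')$ are $\etwa[\delta]$-close with $\delta = v(\xtup-\xtup') - \max_i v(x_i)$, so their chosen roots are $\etwa[\delta - v(e)]$-close, giving (2) via the hypothesis $\ell \ge \kappa_i$. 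Finally, one preliminary step is needed that you did not anticipate: on the definable locus where $\ell(\vtup(\xtup)) = v(x_i)+\lambda$ for some $i$ and some $0\le\lambda < v(e)$, the root argument does not give $\delta > \mu$, so one instead defines $\pow_\ell(\xtup) := p^\lambda x_i$ directly there before proceeding with the $e$-th-root construction on the remaining part of $X$.
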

\begin{proof}
Write $\ell(\kappatup) =: \frac{1}{e}(\beta + \sum_i a_i\kappa_i)$ with $a_i \in
\bbZ$, $\beta \in \Gamma$, $e \in \bbN_{>0}$.
Set $\mu := 1 + 2v(e)$. For $x \in G := p^{e\Gamma}\cdot\ball{1,\mu}$, write
$\sqrt[e]{x}$ for the $e$-th root of $x$ lying in $p^\Gamma\cdot\ball{1,1+v(e)}$
(which exists by Lemma~\ref{lem:root}).
Choose representatives $r_\nu \in \Zp\mult$ of the sets $\Zp\mult/\ball{1,\mu}$.
Using these choices, we define $\pow_{\ifu}$ as follows.

First suppose $1 \le i \le m$ and $0 \le \lambda < v(e)$,
and consider the definable set $X_{i,\lambda} := \{\xtup \in X \mid
\ifu(\vtup(\xtup)) = v(x_i) + \lambda\}$. For $\xtup \in X_{i,\lambda}$,
we define $\pow_{\ifu}(\xtup) := p^\lambda x_i$. This satisfies
both required conditions, so we may remove $X_{i,\lambda}$ from $X$.
We do this successively for all $i \le m$ and all $\lambda < v(e)$
and henceforth suppose that
\begin{equation}\label{eq:lvGross}
\ifu(\vtup(\xtup)) \ge v(x_i) + v(e)
\end{equation}
for $\xtup \in X$ and all $i$.

For $\xtup \in X$, set
$\pi(\xtup) := p^\beta\prod_{i = 1}^m x_i^{a_i}$. As $\ifu$ is defined
on $\vtup(\xtup)$, we get $v(\pi(\xtup)) = e\cdot\ifu(\vtup(\xtup)) \in e\Gamma$,
so $\pi(\xtup)$ lies in $p^{e\Gamma}\ball{1,\mu}r_\nu$
for some $\nu$. Thus $\frac{\pi(\xtup)}{r_\nu} \in G$, and we define
$\pow_{\ifu}(\xtup) := \sqrt[e]{\frac{\pi(\xtup)}{r_\nu}}$.

It is clear from the definition that
$v(\pow_\ifu(\xtup)) = \ell(\vtup(\xtup))$.
Now suppose we have $\xtup, \xtup' \in X$ with $\vtup(\xtup) = \vtup(\xtup')$.
As both $\pow_\ifu(\xtup)$ and $\pow_\ifu(\xtup')$ lie in
$p^{\ell(\vtup(\xtup))}\ball{1,1 + v(e)}$, we have
$v(\pow_\ifu(\xtup) - \pow_\ifu(\xtup')) \ge \ell(\vtup(\xtup)) + 1 + v(e)$;
so the second condition is satisfied unless
\begin{equation}\label{eq:diffGross}
v(\xtup - \xtup') > \ifu(\vtup(\xtup)) + 1 + v(e)
.
\end{equation}

Set $\delta := v(\xtup - \xtup') - \max\{v(x_i) \mid 1 \le i \le m\}$.
By \req{lvGross} and \req{diffGross}, we have $\delta > \mu$ and
in particular $\delta > 0$. By definition $\delta \le v(x_i - x_i') - v(x_i)$
for all $i$, so we have $x_i \etwa x_i'$,
which implies
$\pi(\xtup) \etwa[\delta] \pi(\xtup')$.
As $\delta > \mu$, we have
$\pow_{\ifu}(\xtup) = \sqrt[e]{\frac{\pi(\xtup)}{r_\nu}}$ and
$\pow_{\ifu}(\xtup') = \sqrt[e]{\frac{\pi(\xtup')}{r_\nu}}$
for the same $r_\nu$, so Lemma~\ref{lem:root} yields
$\pow_\ifu(\xtup) \etwa[\delta - v(e)] \pow_\ifu(\xtup')$;
hence
$v(\pow_\ifu(\xtup) - \pow_\ifu(\xtup')) \ge
v(\pow_\ifu(\xtup)) + \delta - v(e) \ge v(\xtup - \xtup')$ by \req{lvGross}.
\end{proof}

In the main proof, we will use the following ``Lipschitz union argument'' several
times: we will have two (or more) sets $X, X' \subset \Zp^m \times \Zp^N$
with Lipschitz continuous fibers in the first $m$ variables and would like to
infer that the union has Lipschitz continuous fibers, too.
This is possible if for any $\xtup_{1}, \xtup_{2} \in \Zp^m$, the corresponding
isometries $\phi\colon X_{\xtup_1} \to X_{\xtup_2}$ and $\phi'\colon X'_{\xtup_1} \to
X'_{\xtup_2}$ satisfy $v(\phi(\ytup) - \phi'(\ytup')) = v(\ytup - \ytup')$
for $\ytup \in X_{\xtup_1}, \ytup' \in X_{\xtup'_1}$.
In particular, this is true if $v(\ytup - \ytup')$
does not depend at all on
$\xtup \in \Zp^m, \ytup \in X_{\xtup}, \ytup' \in X'_{\xtup}$.

\begin{proof}[Proof of Theorem~\ref{thm:inv}]
$\kappatup$ and $\mutup$ will denote elements of $\Gamma^{m}$.
It will be useful to define $\kappa_0 := \mu_0 := 0$.
We will work inside $\Zp^{m+N}$ for some large $N$; $(\xtup, \ytup)$
will be an element of $\Zp^{m+N}$, where $\xtup \in \Zp^m$ and 
$\ytup \in \Zp^N$. Sometimes, we will also write $\ytup = (z, \ystup)$,
with $z \in \Zp$ and $\ystup \in \Zp^{N-1}$.
We will denote the fiber of a set $X \subset \Zp^{m+N}$
at $\xtup \in \Zp^m$ by $X_{\xtup}$.

Let us formulate a suitable parametrized version of the statement, which
we will then prove by induction over the level of the tree.
We start with the following data:
a definable set $M \subset \Gamma^{m}$, a tree $\tree\colon M \to \Trees$
of level $d$ without leaves, and a tuple $\mutup \in \Gamma_{>0}^{m}$.
We suppose that for any $\kappatup \in M$,
we have $\kappa_{i-1} + \mu_{i-1} \le \kappa_{i}$ for $i \in \{1, \dots, m\}$
(i.e.\ $M$ is contained in an ``upper triangle'').

Using this, we define a set $G \subset \Zp^{m}$ as follows.
For $\kappatup \in M$, define the rectangle
\[
G_{\kappatup} :=
p^{\kappa_1}\ball{1, \mu_1} \times \dots \times
p^{\kappa_m}\ball{1, \mu_m}
,
\]
and set $G := \bigcup_{\kappatup \in M}G_{\kappatup}$.
It will also be useful to define $\lambda(\kappatup) := \kappa_{m} + \mu_{m}$
for $\kappatup \in M$ ($\lambda(\kappatup)$ is the radius of
$p^{\kappa_m}\ball{1, \mu_m}$).
Note that $G_{\kappatup} = \{\xtup \in G \mid \vtup(\xtup) = \kappatup\}$ and
that $G$ is definable (using e.g.\ Lemma~2.1 of \cite{Den:cell}).

The claim we will prove by induction is the following. For $N$ sufficiently large,
there exists a definable set $X = X(\tree, \mutup) \subset \Zp^{m+N}$
of dimension at most $m + d$ such that the following holds:
\begin{itemize}
\item
$X \subset \bigcup_{\kappatup \in M} \left( G_{\kappatup} \times 
p^{\lambda(\kappatup)}\Zp^N\right)$
\item
For any $\kappatup \in M$ and any $\xtup \in G_{\kappatup}$,
$\Tr_{0, \lambda(\kappatup)}(X_{\xtup}) \cong \tree(\kappatup)$.
\item
For any $\kappatup \in M$,
the fiber $X_{\xtup}$ varies Lipschitz continuously with $\xtup \in
G_{\kappatup}$.
\end{itemize}

If $m = 0$, then $G = G_{\kappatup}$ is the one-point set,
where $\kappatup$ is the empty
tuple, $\lambda(\kappatup) = 0$, and the statement becomes $\Tr(X) \cong \tree$,
which is our theorem.

\medskip

Let $\joint_0, \dots, \joint_r$ be the joints of $\tree$,
including the virtual ones (i.e.\ the ones at depth infinity). We will
start by constructing definable functions $f_0, \dots, f_r\colon G \to \Zp^N$
which yield the skeleton of $\tree$ in the following sense.
For $\kappatup \in M$ and $\xtup \in G_{\kappatup}$,
set
\[
\tree_{\xtup} := \{\ball{f_i(\xtup), \lambda(\kappatup) + \nu}
\mid 0 \le i \le r, 0 \le \nu \le \depth(\joint_i)(\kappatup), \nu < \infty\}
\subset \Tr_{0, \lambda(\kappatup)}(\Zp^N)
.
\]
There will be isomorphisms
$\psi_{\xtup}\colon \tree(\kappatup) \to \tree_{\xtup}$
sending $\joint_i(\kappatup)$ to $\ball{f_i(\xtup), \lambda(\kappatup) +
\depth(\joint_i)(\kappatup)}$.

Let $X'$ be the union of the graphs of those functions
$f_i$ which correspond to virtual joints; the tree
$\Tr_{0,\lambda(\kappatup)}(X'_{\xtup})$ is exactly the subtree of $\tree_{\xtup}$
consisting of the infinite paths.
Later, we will define a set $X''$ which yields the side branches of $\tree$:
$X''$ will be a union
\[
X'' = \bigcup_{\kappatup \in M} \bigcup_{v \in \tree(\kappatup)}
X''_{\kappatup, v}
\]
such that for any $\xtup \in G_{\kappatup}$,
the fiber $Z := (X''_{\kappatup, v})_{\xtup}$ is contained in
the corresponding node $B := \psi_{\xtup}(v)$ of $\tree_{\xtup}$,
its tree $\Tr_B(Z)$ is isomorphic to the side branch of
$\tree(\kappatup)$ at $v$, and the intersection of $\Tr_B(Z)$ and
$\tree_{\xtup}$ consists only of $B$. We then set $X := X' \cup X''$.
Thus $\Tr_{0,\lambda(\kappatup)}(X_{\xtup})$ will have a side branch
at $B \in \tree_{\xtup}$ which is isomorphic to the corresponding one of
$\tree(\kappatup)$, and as $\tree(\kappatup)$ has no leaves,
$\Tr_{0,\lambda(\kappatup)}(X_{\xtup})$ will contain the whole skeleton
$\tree_{\xtup}$.

We will have to ensure that the fibers $X_{\xtup}$ vary
Lipschitz continuously with $\xtup \in G_{\kappatup}$.
Our functions
$f_i$ will satisfy 
\begin{equation}\label{eq:fLip}
v(f_i(\xtup_1) - f_i(\xtup_2)) \ge v(\xtup_1 - \xtup_2)
\quad \text{for } \xtup_1,\xtup_2 \in G_{\kappatup}
;
\end{equation}
this implies 
Lipschitz continuity of the fibers of $X'$. We will also prove
Lipschitz continuity for each set $X''_{\kappatup,v}$.
Then the Lipschitz union argument yields continuity for $X$.

Now let us construct the functions $f_i$.
To get the isomorphism $\tree(\kappatup) \cong \tree_{\xtup}$, it suffices to have
\begin{equation}\label{eq:fDif}
v(f_i(\xtup) - f_j(\xtup)) = d_{i,j}(\kappatup) + \lambda(\kappatup)
, 
\end{equation}
where $d_{i,j}\colon M \to \Gamma$
is the ``separating depth'' of the joints $\joint_i$ and $\joint_j$:
the depth of the deepest common ancestor of both.
Set $f_0(\xtup) := 0$ for all $\xtup \in G$.
For $j \ge 1$, consider the maximum $\dmax := \max\{d_{i,j}\mid 0 \le i < j\}$
under the partial order defined by pointwise comparison;
note that for $j$ fixed, all $d_{i,j}$ are comparable.
Choose any $i < j$ with $d_{i,j} = \dmax$ and define
\begin{equation}\label{eq:fDef}
f_j(\xtup) := f_i(\xtup) + \pow_{d_{i,j} + \lambda}(\xtup)\cdot (0,  
\underset{\text{pos. $i+1$}}{\underset{\uparrow}{\dots,0,1,0,\ldots}}, 0)
,
\end{equation}
where $\pow_{d_{i,j} + \lambda}$ comes from Lemma~\ref{lem:invFkt}.
By definition of $\pow_{d_{i,j} + \lambda}$, \req{fDef} implies \req{fDif}
for those specific $i,j$. For other pairs $i < j$, \req{fDif} follows by induction
on $j$. Moreover, \req{fLip} follows from the second property of the functions
$\pow_{d_{i,j} + \lambda}$.

\medskip

It remains to define the sets $X''_{\kappatup, v}$. We will show how to do
this when $v$ lies on a bone; for joints, a simplified version of the same
argument will do.
So fix a bone $\bone = (\joint_i, \joint_j)$ of $\tree$
and a congruence class $\ccGamma \in \Gamma/\rho\Gamma$.
Let $N_{\kappatup} := \{\kappa' \in \ccGamma \mid \depth(\joint_i)(\kappatup)< \kappa' < \depth(\joint_j)(\kappatup)\}$
be the set depths of the corresponding side branches of $\tree(\kappatup)$,
and set
$N := \{(\kappatup, \kappa') \mid \kappatup \in M, \kappa' \in N_{\kappatup}\}$.
We will construct a definable set
\[
Y = \bigcup_{\kappatup \in M} \!\bigcup_{%
\substack{v \in \bone(\kappatup)\\\depth(v) \in \ccGamma}}\!\!\!\!
X''_{\kappatup, v}
.
\]

For $\xtup \in G_{\kappatup}$, the fiber $(X''_{\kappatup, v})_{\xtup}$
is supposed to be contained in
$B := \psi_{\xtup}(v) = \ball{f_j(\xtup), \lambda(\kappatup) + \depth(v)}$.
By applying the isometry $(\xtup, \ytup) \mapsto (\xtup, \ytup - f_j(\xtup))$
(which neither harms the trees of fibers, nor Lipschitz continuity),
we may assume $f_j(\xtup) = 0$.

Now notice that in \req{fDef}, we did not use the first coordinate of $\Zp^N$
at all, hence any child of $B = p^{\lambda(\kappatup) + \depth(v)}\Zp^N$
in $\tree_{\xtup}$ is contained
in $p^{\lambda(\kappatup) + \depth(v)}(p\Zp \times \Zp^{N-1})$.
We will ensure that $\tree_{\xtup}$ and $\Tr_{B}((X''_{\kappatup, v})_{\xtup})$
only intersect in $B$ by choosing
\begin{equation}\label{eq:inA}
(X''_{\kappatup, v})_{\xtup} \subset
A_{\kappatup,v} := p^{\lambda(\kappatup) + \depth(v)}((1 + p\Zp) \times
\Zp^{N-1})
.
\end{equation}

Let $\fintree$ be the finite tree at the beginning of the side branch
of $\tree$ corresponding to $\bone, \ccGamma$,
and for each leaf $w$ of $\fintree$,
let $\tree_w\colon N \to \Trees$ be the corresponding side tree of level $d-1$.
%We can not (and do not want to) apply the induction hypothesis directly
%to $\tree_w$, because in general, $(\kappatup, \kappa') \in N$
%does not satisfy $\kappa_m + \mu_m \le \kappa'$. Instead, 
Define a shifted set $\Nshift := \{(\kappatup, \lambda(\kappatup) + \kappa')
\mid (\kappatup, \kappa') \in N\}$ and a shifted tree
$\treeshift_w\colon \Nshift \to \Trees, \treeshift_w(\kappatup, \lambda(\kappatup) + \kappa') = \tree_w(\kappatup, \kappa')$.
We apply the induction hypothesis to $\treeshift_w$ using
$\mu_{m+1} := \depth_{\fintree}(w)$ (we may suppose $\depth_{\fintree}(w) > 0$);
denote by $X_w := X(\treeshift_w, (\mu_1, \dots, \mu_{m+1}))$ the resulting definable set.

Fix $\kappatup \in M$ and $\xtup \in G_{\kappatup}$.
For $z \in \Qp$, the fiber $(X_w)_{(\xtup,z)}$ is non-empty if and only
if $z \in p^{\lambda(\kappatup) + \kappa'}\ball{1,\mu_{m+1}}$ for some $\kappa' \in N_{\kappatup}$,
and if this is the case, then
$\Tr_{0, \lambda(\kappatup) + \kappa'+\mu_{m+1}}((X_w)_{(\xtup,z)}) \cong
\tree_w(\kappatup, \kappa')$.
Set 
\[
B_{\kappa'} :=
% p^{\lambda(\kappatup) + \kappa'}\ball{1, \mu_{m+1}} \times
% p^{\lambda(\kappatup) + \kappa' + \mu_{m+1}}\Zp^{N-1} =
p^{\lambda(\kappatup) + \kappa'}\ball{(1,0,\dots,0), \depth_{\fintree}(w)}
\subset \Zp^N
;
\]
then $(X_w)_{\xtup}$ is contained in
$\bigcup_{\kappa' \in N_{\kappatup}} B_{\kappa'}$, and
Lipschitz continuity of fibers $(X_w)_{(\xtup,z)}$ of $(X_w)_{\xtup}$ yields
$\Tr_{B_{\kappa'}}((X_w)_{\xtup}) \cong \Tr(\Zp) \times \tree_w(\kappatup, \kappa')$.

Now choose an embedding of $\fintree$ into $\Tr(\Zp^{N-1})$
and let $\ball{\ystup_w, \depth(w)}$ be the image of the leaf $w$.
The map $\phi_w(\xtup, z, \ystup) := (\xtup, z, \ystup + z \cdot
\ystup_w)$ is an isometry sending $G_{\kappatup} \times B_{\kappa'}$ to
$G_{\kappatup} \times p^{\lambda(\kappatup) + \kappa'}\ball{(1, \ystup_w), \depth(w)}$.
We claim that the set $Y:= \bigcup_w \phi_w(X_w)$ is the one
we are looking for; more precisely, if $\kappatup \in M$, $v \in \bone(\kappatup)$,
$\kappa' := \depth(v) \in \ccGamma$, then we claim
\[
X''_{\kappatup,v} = \bigcup_w \phi_w(X_w \cap (G_{\kappatup} \times B_{\kappa'}))
.
\]
Fix $\xtup \in G_{\kappatup}$ and $B := p^{\lambda(\kappatup) + \kappa'}\Zp^N$.
$(X''_{\kappatup,v})_{\xtup}$ is contained in the union of balls
$B_{w} := p^{\lambda(\kappatup) + \kappa'}\ball{(1, \ystup_w), \depth(w)}$, which in
turn are contained in $A_{\kappatup,v}$, so \req{inA} is satisfied.

The finite subtree of $\Tr_B(\Zp^N)$ with leaves $B_w$ is isomorphic to $\fintree$,
and the tree of $(X''_{\kappatup,v})_{\xtup}$ on $B_w$ is isomorphic to
$\Tr(\Zp) \times \tree_w(\kappatup, \kappa')$, so the tree
$\Tr_{B}((X''_{\kappatup,v})_{\xtup})$ is the right one.
Finally, using Lipschitz continuity in $\xtup$ of the fibers of
$\phi_w(X_w \cap (G_{\kappatup} \times B_{\kappa'}))$
and the Lipschitz union argument, we get Lipschitz continuity of the fibers
of $X''_{\kappatup,v}$.

\medskip

Note that $\dim X' \le m$ and by induction $\dim X'' \le (m+1) + (d-1)$;
thus $\dim X \le m + d$.
\end{proof}

\section{The main proofs}
\label{sect:proofs}

In this section we will prove the main conjecture in the interesting cases.
We start by sketching the proofs; an overview over the remainder
of the section will be given after that sketch.

\subsection{Idea of proof}
\label{subsect:idea}

Suppose that $X$ is a definable set of dimension $d$ and that we want
to check that $\Tr(X)$ is a level $d$ tree. By compactness
(as in the case of smooth varieties) it suffices to understand
the tree on a neighborhood of each point of $\clX$.
To understand the tree near a given point---without loss $0$---we proceed
as in the example of the cusp curve: we compute it
on balls $B$
which are close to $0$ but which do not
contain $0$;
the largest such balls are of the form
$B = \ball{p^{\kappa}\xtup_0, \kappa + 1}$
with $v(\xtup_0) = 0$. The total tree will be of level $d$
if the following two conditions hold:
\begin{enumerate}
\item\label{it:cBall}
The tree on each ball $B$ looks like the tree of a side
branch: after cutting $B$ into finitely many smaller balls,
it is of the form $\Tr(\Zp) \times \tree$, where $\tree$
is of level $d - 1$.
\item\label{it:cUnif}
If we let $\kappa$ go to infinity (i.e.\ the ball $B$ approaches $0$),
then the trees on $B$ are uniform in $\kappa$
(in the way required by the definition of level $d$ trees).
\end{enumerate}

Now suppose that $X$ is one-dimensional.
For simplicity, assume moreover $X \subset \Qp^2$.
It is known that such a set $X$ is a subset of an algebraic set $V$.
By applying the theorem of Puiseux to $V$,
close to $(0,0)$
we can write $X$ as union of branches, each of which
is the graph of series of the form $f(x) = \sum_i a_i \sqrt[e]{x}^i$.
Taking the $e$-th root is of course not unique, but as in the cusp
example, on each ball $B = \ball{p^{\kappa}(x_0,y_0), \kappa + 1}$
we can choose roots in such a way that we get a continuous function $f$.
(In fact, here we might need to replace $\kappa+1$
by $\kappa+\mu$ for some fixed $\mu > 1$.)
Now suppose that $v(x_0) = 0$, i.e.\ $B$ does not lie 
directly above or below
$(0,0)$. Then for large $\kappa$, the graph of
$f$ will intersect $B$ only if its derivative at $0$ has non-negative
valuation. Using this, we get Lipschitz 
continuity of $f$: $v(f(x_1) - f(x_2)) \ge v(x_1 - x_2)$.
This will allow us to apply Corollary~\ref{cor:contTree}, which
will finally imply condition \rit{cBall}. If on the other hand $v(x_0) > 0$,
then $v(y_0) = 0$, and the same argument applies with coordinates exchanged.

All this can be carried out uniformly in
$\kappa$, and we will get the uniformity required in \rit{cUnif}
by having a second look at the Puiseux series describing the branches.
If $\sum_i a_i \sqrt[e]{x}^i$ is the difference of two such series,
then for $\kappa = v(x) \gg 0$, the valuation of this is equal to
$v(a_{\iota}) + \frac{\iota}{e}v(x)$, where $a_{\iota}$ is the first
non-zero coefficient. This valuation corresponds to the depth of
a joint of the side tree; as required, it is linear in $\kappa$.

\medskip

To get a proof for two-dimensional definable subsets of $\Qp^2$, we
use cell decomposition to understand $X$ and then
apply the Puiseux series arguments to the centers of cells (which are
curves). Lipschitz continuity of these centers yields
Lipschitz continuity of the whole fibers of the cells,
so Corollary~\ref{cor:contTree} implies that the tree
on a ball $B$ is of the form $\Tr(\Zp) \times \tree$, where
$\tree$ is the tree of one fiber.

Of course the tree of a fiber is of level $1$ (as its dimension
is at most $1$), but we need uniformity in $\kappa$. To prove this,
for each $\kappa$ we will choose one fiber $X_\kappa$ in the
corresponding ball. The cell decomposition of $X$ yields a cell decomposition
of each $X_\kappa$ which is ``close to uniform''; for example,
for $\kappa \gg 0$ a cell center will be close to $p^{\ell(\kappa)}\cdot a$
for some fixed $a \in \Qp$ and some linear function $\ell$.
This uniformity will allow
us to deduce that the parametrized tree $\kappa \mapsto \Tr(X_{\kappa})$
is of level $1$.

\medskip

The remainder of this section is organized as follows.
First, we recall cell decompositions; in the next two
subsections, we introduce ``garlands'', which are the right
sets to work on when one wants to carry out the above arguments
concerning Puiseux series
uniformly in $\kappa$. In Subsection~\ref{subsect:1dimParam},
we introduce the close-to-uniform families of sets $X_\kappa$
and prove that they have uniform level $1$ trees, and
in the last two subsections, we carry out the remainder of
the above arguments.
%in Subsection~\ref{subsect:proofZp2},
%we prove that definable subsets of $\Qp^2$ have a tree of level $2$,
%and in Subsection~\ref{subsect:proof1dim}, 

\subsection{Cell decomposition}

%We use the following version of cell decomposition.

The following is almost the usual definition of a cell decomposition.
The only difference is that we are a bit more restrictive on the conditions
$\conda$ and $\condb$ in a harmless way;
this will save us a few clumsy case distinctions.

\begin{defn}\label{defn:cell}
\begin{enumerate}
\item
The only \emph{cell} in $\Qp^0$ is the one-point set $\Qp^0$ itself.

A \emph{cell} in $\Qp^n$ is a set of the form
\[
C =
\{(\xtup, y) \in D \times \Qp \mid
\alpha(\xtup) \conda v(y - c(\xtup)) \condb \beta(\xtup)
\text{ and }\exists z\; y - c(\xtup) = rz^e\}
,
\]
where $D$ is a cell in $\Qp^{n-1}$,
$\alpha, \beta\colon D \to \Gamma \cup \{\infty\}$ and $c\colon D\to \Qp$ are definable functions,
$r \in \Qp\mult$, $e \in \bbN_{\ge1}$,
$\conda$ is either $\le$ or no condition and $\condb$ is either $\le$ or $<$.
Moreover, we suppose that the projection $C \to D$ is surjective and
that if $\condb$ is $<$, then $\beta = \infty$.

We call $D$ the \emph{base}, $c$ the \emph{center},
$\alpha$ and $\beta$ the \emph{lower} and \emph{upper bound}, $e$ the \emph{exponent}
and $r$ the \emph{residue} of $C$.
\item
A \emph{cell decomposition of $\Qp^n$} is a partition of $\Qp^n$
into finitely many disjoint cells.
If
$n > 0$, then we additionally require that the set of bases of
the cells is a cell decomposition of $\Qp^{n-1}$.
\end{enumerate}
\end{defn}

By \emph{fixing a cell decomposition}, we will mean that we also fix
the data $D, c, \alpha, \beta, \dots$ describing the cells.

The usual cell decomposition theorem is the following; see e.g.\ \cite{SD:dim},
Section~4.

\begin{lem}\label{lem:cell}
Let $X \subset \Qp^n$ be a definable set. Then there
exists a cell decomposition of $\Qp^n$ such that $X$
is a union of cells.
\end{lem}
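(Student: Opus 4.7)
The plan is to induct on $n$, using Macintyre's quantifier-elimination theorem. For $n=0$ the statement is trivial. For the inductive step, suppose the lemma holds in $\Qp^{n-1}$ and let $X\subset\Qp^n$ be definable. By Macintyre's theorem, $X$ is defined by a quantifier-free formula in the ring language augmented with unary predicates $P_e$ for the $e$-th powers in $\Qp\mult$. Passing to disjunctive normal form and taking finite unions, it suffices to decompose sets cut out by a conjunction of atomic formulas in $y:=x_n$; after clearing denominators each such atom has one of the two shapes
\[
v(F(\xtup,y))\;\square\;v(G(\xtup,y)), \qquad F(\xtup,y)\in r\cdot(\Qp\mult)^e,
\]
with $F,G\in\Qp[\xtup][y]$ and $\square\in\{<,\le,=\}$.

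The analytic core is to understand how such conditions depend on $y$. Over $\aclQp$ I factor $F(\xtup,y)=u(\xtup)\prod_j(y-c_j(\xtup))$, with $c_j$ algebraic in $\xtup$, and use Hensel's lemma in parametric form to partition the base $\Qp^{n-1}$ into finitely many definable pieces on which each $c_j$ either lies in $\Qp$ (as a definable function of $\xtup$) or comes in Galois orbits whose valuations relative to $y$ are controlled by a $\Qp$-rational substitute. After further refining both $\xtup$ and $y$ so that on each piece a single root $c(\xtup)$ is strictly closer to $y$ than all others, the additivity $v(F(\xtup,y))=v(u(\xtup))+\sum_j v(y-c_j(\xtup))$ rewrites any valuation inequality as a condition on $v(y-c(\xtup))$ alone, yielding bounds $\alpha(\xtup)\conda v(y-c(\xtup))\condb\beta(\xtup)$ in the required cell shape. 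For the remaining atoms $F(\xtup,y)\in r\cdot(\Qp\mult)^e$, Lemma~\ref{lem:root} guarantees that on small enough pieces the unit part of $F$ has constant $e$-th power residue, so the condition reduces to $\exists z\colon y-c(\xtup)=r'z^e$ for a suitable $r'\in\Qp\mult$.

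To match the restricted form imposed in Definition~\ref{defn:cell}, I would convert any strict lower bound $\alpha(\xtup)<v(y-c(\xtup))$ into $\alpha(\xtup)+1\le v(y-c(\xtup))$, and split any region where $\condb$ is strict but $\beta(\xtup)<\infty$ into two pieces, one with $\condb=\le$ at some finite threshold and the other genuinely unbounded with $\beta=\infty$; both refinements are harmless and only enlarge the partition by a bounded amount. Finally, the inductive hypothesis applied to the (finitely many) bases of the resulting cells produces a cell decomposition of all of $\Qp^n$ of which $X$ is a union of cells.

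The principal obstacle is the parametric control of the roots $c_j(\xtup)$ and of the valuation $v(F(\xtup,y))$ as a function of $y$: this is the genuine content of Denef's original cell decomposition theorem, and everything else in the argument is essentially combinatorial bookkeeping about how to organize the resulting pieces into the shape prescribed by Definition~\ref{defn:cell}.
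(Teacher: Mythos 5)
The paper does not give a proof of this lemma at all: it is presented as ``the usual cell decomposition theorem'' with a citation to Scowcroft--van den Dries \cite{SD:dim} (which in turn rests on Denef \cite{Den:cell} and Macintyre). The only content specific to the paper is the observation, made just before Definition~\ref{defn:cell}, that the minor restrictions on $\conda$ and $\condb$ are ``harmless.'' Your proposal, by contrast, sketches one of the standard proofs from scratch (Macintyre quantifier elimination, then induction on $n$ with a valuation-theoretic analysis of polynomial roots). That is a legitimate route, but you are candid in your last paragraph that the ``analytic core'' --- parametric control of the roots $c_j(\xtup)$ and of $v(F(\xtup,y))$, which is Denef's preparation/division lemma --- is being taken from the literature rather than proved. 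So in effect you are describing the known proof rather than supplying one, which puts your argument on the same footing as the paper's citation, only more verbose.

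Two small technical points worth tightening if you want this to read as a proof sketch rather than a road map. First, passing to disjunctive normal form gives you a cell decomposition \emph{for each} conjunction, not a single decomposition of $\Qp^n$ of which $X$ is a union; you need to take a common refinement of finitely many decompositions, and you also need the bases of the resulting cells to form a cell decomposition of $\Qp^{n-1}$ (the compatibility clause in Definition~\ref{defn:cell}(2)), which requires a further refinement of bases and a corresponding refinement upstairs. Second, the sentence about converting strict bounds is slightly off: since $\Gamma \cong \bbZ$, a strict bound $v(y-c)<\beta(\xtup)$ with $\beta<\infty$ is simply $v(y-c)\le\beta(\xtup)-1$; no splitting into two pieces is needed unless $\beta$ takes the value $\infty$ on part of the base, in which case one partitions the base (definably) according to whether $\beta(\xtup)=\infty$. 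These are bookkeeping matters and do not affect the substance, which is --- as you say --- all in Denef's lemma.
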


The following easy fact about one-dimensional cells will be used quite often:
\begin{lem}\label{lem:cellEasy}
There exists a function $\delta \colon \bbN_{\ge1} \to \Gamma_{>0}$
such that the following holds.
\begin{enumerate}
\item
Let $C \subset \Qp$ be a cell with center $c$ and exponent $e$,
and suppose $x_1 \in C$ and $x_2 \in \Qp \ohne C$.
Then $v(x_1 - x_2) < v(x_i - c) + \delta(e)$ for $i \in \{1,2\}$.
\item
Suppose that $C_1$ and $C_2$ are two disjoint cells
with centers $c_1$ and $c_2$ and common exponent $e$,
and suppose that $x_1 \in C_1$ and $x_2 \in C_2$.
Then $v(x_1 - x_2) < v(c_1 - c_2) + \delta(e)$.
\end{enumerate}
\end{lem}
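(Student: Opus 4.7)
The plan is to take $\delta(e) := 2v(e) + 1$, the bound already furnished by Lemma~\ref{lem:root}(2), and to reduce part (1) to that lemma and part (2) to part (1) via the ultrametric inequality.

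For part (1), fix the datum of $C$: center $c$, exponent $e$, residue $r$, and valuation bounds $\alpha \conda v(y-c) \condb \beta$; set $\lambda_i := v(x_i - c)$ for $i \in \{1,2\}$. If $\lambda_1 \ne \lambda_2$, then $v(x_1 - x_2) = \min(\lambda_1,\lambda_2) \le \lambda_i$ by the ultrametric inequality, so the required bound holds trivially. If $\lambda_1 = \lambda_2 =: \lambda$, then the valuation condition on $C$ holds for $x_2$ exactly when it holds for $x_1$, so the only remaining reason for $x_2 \notin C$ is the residue condition: $x_1 - c \in r(\Qp\mult)^e$ while $x_2 - c \notin r(\Qp\mult)^e$. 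Hence the $e$-th power residues of $x_1 - c$ and $x_2 - c$ in $\Qp\mult$ differ, and the contrapositive of Lemma~\ref{lem:root}(2) forces $(x_1 - c) \netwa[2v(e)+1] (x_2 - c)$, i.e.\ $v(x_1 - x_2) < \lambda + 2v(e) + 1 = \lambda_i + \delta(e)$.

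For part (2), the disjointness of $C_1$ and $C_2$ gives $x_2 \notin C_1$ and $x_1 \notin C_2$, so part (1) applies to either cell (they share the exponent $e$). Split cases according to how $v(x_i - c_i)$ compares with $v(c_1 - c_2)$. If $v(x_1 - c_1) \le v(c_1 - c_2)$, then part (1) applied to $C_1$ yields $v(x_1 - x_2) < v(x_1 - c_1) + \delta(e) \le v(c_1 - c_2) + \delta(e)$; the symmetric case $v(x_2 - c_2) \le v(c_1 - c_2)$ is handled via $C_2$. Otherwise both $v(x_i - c_i)$ strictly exceed $v(c_1 - c_2)$, and writing $x_1 - x_2 = (x_1 - c_1) + (c_1 - c_2) - (x_2 - c_2)$ the middle term is the unique one of minimal valuation, so $v(x_1 - x_2) = v(c_1 - c_2) < v(c_1 - c_2) + \delta(e)$.

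The only non-trivial ingredient is the power-residue step in part (1), which is already packaged in Lemma~\ref{lem:root}(2); part (2) then reduces cleanly to part (1) by bookkeeping with the ultrametric inequality, so no essential obstacle remains beyond case analysis.
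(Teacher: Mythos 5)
Your proof is correct and follows essentially the same route as the paper: the same choice $\delta(e) = 2v(e)+1$, part (1) reduced to Lemma~\ref{lem:root}(2) (you just spell out the case split on whether $v(x_1-c) = v(x_2-c)$, which the paper leaves implicit), and part (2) reduced to part (1) via the ultrametric inequality. The only minor stylistic difference is in (2): the paper applies (1) twice using the \emph{same} point $x_1$ (once as the member of $C_1$, once as the non-member of $C_2$, exploiting that (1) bounds against both $v(x_1 - c)$ and $v(x_2 - c)$) and then invokes the triangle inequality on $c_1, x_1, c_2$, whereas you run a three-case analysis comparing $v(x_i - c_i)$ with $v(c_1 - c_2)$; both are valid and comparably short.
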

\begin{proof}
Set $\delta(e) := 2v(e) + 1$. Then (1) follows from Lemma~\ref{lem:root} (2).

For (2), use
(1) and the disjointness of $C_1$ and $C_2$ to get (for $i=1,2$)
$v(x_1 - x_2) < v(x_1 - c_i) + \delta$.
Now apply the triangle inequality to $c_1, x_1, c_2$.
\end{proof}

\subsection{Garlands and trees}

Suppose that $X \subset \Zp^n$, $\xtup_0 \in \Zp^n$,
$B_0=\ball{\xtup_0, \lambda}$, and $B \subset B_0$ is a ball not containing
$\xtup_0$.
As described in Subsection~\ref{subsect:idea},
we will try to understand $\Tr_{B}(X)$ uniformly
when $B$ approaches $\xtup_0$. To be able to speak about uniformity,
we have to determine the trees on a whole ``garland'' of balls approaching
$\xtup_0$ at once. In this subsection, we define these garlands
and show that indeed knowing the trees on appropriate garlands suffices to
get back the whole tree of $X$ (Lemma~\ref{lem:compact}).

The reason to work on garlands and not on the whole of $B_0$ is
essentially that on a garland, it makes sense to speak of one
specific branch of the $e$-th root function, whereas on the whole of
$B_0$ it does not. In the next subsection, we will use this to
infer a nice description of definable functions
on garlands close to $\xtup_0$.

\begin{defn}\label{defn:garland}
Suppose we have $\xtup_0 \in \Zp^n$, $\lambda \in \Gamma_{\ge 0}$,
and $\mu, \rho \in \Gamma_{>0}$.
A \emph{garland} $G$ corresponding to $\xtup_0, \lambda, \mu, \rho$ is
a set of the form
\[
G = \xtup_0 +
\bigcup_{\substack{\kappa \ge \lambda\\\kappa \in \ccGamma}}
    p^\kappa \ball{\xtup_G, \mu}
\]
for some $\xtup_G \in \Zp^n$ satisfying $v(\xtup_G) = 0$ and some $\ccGamma
\in \Gamma/\rho\Gamma$. We will write
\[
M(G) := \{\kappa \in \ccGamma \mid \kappa \ge \lambda\}
\]
for the set over which the union goes, and
call the subsets $G_\kappa := \xtup_0 + p^\kappa \ball{\xtup_G, \mu}$
for $\kappa \in M$
the \emph{components} of $G$.
\end{defn}
\begin{rem}
$G_\kappa$ consists of exactly those $\xtup \in G$ which satisfy
$v(\xtup - \xtup_0) = \kappa$.
\end{rem}
\begin{rem}
For fixed $\xtup_{0}, \lambda, \mu, \rho$,
garlands form a finite partition of $\ball{\xtup_{0}, \lambda} \ohne
\{\xtup_0\}$.
\end{rem}

We will not always specify $\xtup_0, \lambda, \mu, \rho$;
sometimes we just write
``garland for $\lambda, \mu, \rho$'',
``garland converging to $x_0$'' or ``garland on $\ball{x_0,\lambda}$''.
Moreover, most of the time we will not care for the precise
values of $\lambda, \mu, \rho$; we will only require the garlands
to be ``sufficiently fine'', i.e.\ each garland is a subset of a garland
for certain given $\lambda_0, \mu_0, \rho_0$. This is equivalent to
$\lambda \ge \lambda_0$, $\mu \ge \mu_0$ and $\rho_0 \mid \rho$.
This is also what we will mean by ``$\lambda, \mu, \rho$ sufficiently large'':
for $\rho$ interpret ``large'' multiplicatively.

\begin{defn}\label{defn:treeGarl}
Let $X$ be a subset of $\Zp^n$ and let $G$ be a garland
whose components are $G_{\kappa}$,
for $\kappa \in M := M(G)$. The \emph{tree of $X$ on $G$} is the
parametrized tree
\[
\Tr_{G}(X)\colon M \to \Trees,
\kappa \mapsto \Tr_{G_\kappa}(X)
.
\]
\end{defn}

\begin{lem}\label{lem:compact}
Let $X$ be a subset of $\Zp^n$.
Suppose that for each $\xtup \in \Zp^n$,
there are $\lambda, \mu, \rho$ such that for each garland $G$
(corresponding to $\xtup, \lambda, \mu, \rho$), the parametrized tree
$\Tr_{G}(X)$ is of the form $\kappa \mapsto \Tr(\Zp) \times \tree_{G}(\kappa)$, where
$\tree_{G}$ is piecewise a parametrized tree of level $d$.
Then $\Tr(X)$ is a tree of level $d+1$.
\end{lem}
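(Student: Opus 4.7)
The plan is to use compactness of $\clX \subset \Zp^n$ to reduce to a finite configuration of infinite ``spines'', then explicitly package the garland data as a level $d+1$ tree datum. Since $\Tr(X) = \Tr(\clX)$ (as every ball is clopen), I would first invoke the hypothesis at each $\xtup \in \clX$ to get parameters $\lambda_{\xtup}, \mu_{\xtup}, \rho_{\xtup}$ and extract a finite subcover $B_i := \ball{\xtup^{(i)}, \lambda^{(i)}}$ of $\clX$, with $\xtup^{(i)} \in \clX$. After enlarging to common parameters $(\lambda, \mu, \rho)$ (the garland hypothesis persists when $\lambda,\mu$ are increased and $\rho$ is replaced by a multiple, since each finer garland component sits below a coarser one and subtrees of $\Tr(\Zp)\times\tree_G(\kappa)$ have the right shape by Lemma~\ref{lem:subTree}), one arranges the $B_i$ to be pairwise disjoint and share $(\lambda,\mu,\rho)$.

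The portion of $\Tr(X)$ down to depth $\lambda$ is a finite tree whose leaves are exactly the $B_i$; this is level $0$, hence level $d+1$ by Lemma~\ref{lem:firstProp}~\rit{m+1}. Applying Lemma~\ref{lem:glueCheese} (with cheese $\Zp^n \ohne \bigsqcup B_i$ and holes $B_i$) then reduces the claim to showing each $\Tr_{B_i}(X)$ is piecewise of level $d+1$.

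Fix $i$ and set $\xtup_0 := \xtup^{(i)}$. Since $\xtup_0 \in \clX$, the balls $\ball{\xtup_0,\kappa}$ for $\kappa\ge\lambda$ form an infinite spine in $\Tr_{B_i}(X)$. The complement $B_i\ohne\{\xtup_0\}$ is the disjoint union of the finitely many garlands for $(\xtup_0,\lambda,\mu,\rho)$, indexed by the choice of $\xtup_G$ mod $p^\mu$ (with $v(\xtup_G)=0$) and $\ccGamma\in\Gamma/\rho\Gamma$. Each component $G_\kappa$ sits at depth $\kappa+\mu$ below the spine node $\ball{\xtup_0,\kappa}$, and by hypothesis $\Tr_{G_\kappa}(X)\cong\Tr(\Zp)\times\tree_G(\kappa)$ with $\tree_G$ piecewise of level $d$. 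I would then write a generalized piecewise level $d+1$ datum for $\Tr_{B_i}(X)$ as follows: skeleton a single infinite bone from the real joint $B_i$ (depth $0$) to a virtual joint at $\xtup_0$; joint side branch at $B_i$ given by the finite tree of off-spine descendants from depth $0$ down to depth $\mu$, decorated with $\Tr(\Zp)\times\tree_G(\lambda)$ at each leaf corresponding to a garland $G$ with $\lambda\in\ccGamma$ and $\tree_G(\lambda)\ne\emptyset$; and, for each $\ccGamma\in\Gamma/\rho\Gamma$, a bone side branch of the same shape, parametrized by $\kappa\in\ccGamma\cap(\lambda,\infty)$, attaching $\Tr(\Zp)\times\tree_G(\kappa)$ at each leaf corresponding to a garland $G$ of class $\ccGamma$ with $\tree_G(\kappa)\ne\emptyset$. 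Lemma~\ref{lem:genTreeParam} then converts the datum to a genuine level $d+1$ tree.

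The main obstacle is bookkeeping rather than any conceptual issue: extracting uniform parameters $(\lambda,\mu,\rho)$ from the pointwise hypothesis, and ensuring the finite shape of each side branch depends on $\kappa$ only piecewise definably. The latter requires partitioning the parameter set of each side branch according to which garland components $G_\kappa$ actually meet $X$ (i.e.\ where $\tree_G(\kappa)\ne\emptyset$), and according to the piecewise subdivision already present in $\tree_G$. Exactly this flexibility is supplied by the piecewise/generalized framework of Subsection~\ref{subsect:defPiece}, which is the reason the lemma was formulated for piecewise trees to begin with.
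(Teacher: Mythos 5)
Your proof follows the same outline as the paper's: reduce via compactness to a finite cover of $\clX$ by disjoint balls, take as skeleton the single infinite bone running towards the limit point $\xtup_0$, decompose the complement of the spine into garlands, and package the trees $\Tr(\Zp)\times\tree_G(\kappa)$ as (generalized, piecewise) side-branch data, finally invoking Lemma~\ref{lem:genTreeParam} to land in the normal level-$(d+1)$ class. This is exactly the paper's strategy, including the bookkeeping you flag at the end (partitioning the bone-side-branch parameter set according to which garland components meet $X$ and according to the piecewise subdivisions of the $\tree_G$).

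The one place where you depart from the paper, and where your justification has a gap, is the claim that the garland hypothesis ``persists when $\lambda,\mu$ are increased and $\rho$ is replaced by a multiple.'' Increasing $\lambda$ or replacing $\rho$ by a multiple merely \emph{restricts} the parameter set $M(G)$ to a definable subset, so persistence is immediate. But enlarging $\mu$ to $\mu'$ shrinks each component $G_\kappa$ to a strictly smaller ball $G'_\kappa$, so $\Tr_{G'_\kappa}(X)$ is the subtree of $\Tr_{G_\kappa}(X)\cong\Tr(\Zp)\times\tree_G(\kappa)$ below some node at depth $\mu'-\mu$. Lemma~\ref{lem:subTree} is an \emph{unparametrized} statement: it tells you each individual $\Tr_{G'_\kappa}(X)$ has the form $\Tr(\Zp)\times(\text{level }d)$, but it does not tell you that the resulting level-$d$ trees assemble into a \emph{piecewise parametrized} level-$d$ tree as $\kappa$ varies. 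The node at which one must take the subtree is located only through the abstract isomorphism of the hypothesis, and you have not argued that this varies uniformly enough in $\kappa$. A parametrized analogue of Lemma~\ref{lem:subTree} would be needed here, and the paper does not supply one.

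Fortunately the step is unnecessary. In the paper's proof the balls in the finite cover keep their individual $\lambda^{(j)},\mu^{(j)},\rho^{(j)}$ (only $\lambda$ is enlarged, and only at points outside $\clX$, to force $B\cap X=\emptyset$). After discarding nested balls so that the cover is disjoint, the portion of $\Tr(X)$ lying above the $B_i$ is a finite tree whose leaves are exactly the $B_i$, irrespective of whether the radii $\lambda^{(i)}$ agree. Your ball-by-ball argument then goes through verbatim with the local parameters. If you drop the unification of $\mu$ (and, optionally, of $\lambda$ and $\rho$ too) and proceed ball by ball, your proof and the paper's become essentially identical.
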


\begin{proof}
First, for each $\xtup \in \Zp^n \ohne \clX$ we enlarge the corresponding $\lambda$
such that $B(\xtup, \lambda) \cap X = \emptyset$.
As in the proof of Theorem~\ref{thm:mainSmooth}
(Subsection~\ref{subsect:smooth}), using compacity of 
$\Zp^n$ it suffices to prove that the tree on each ball $B(\xtup, \lambda)$
is of level $d+1$;
the whole tree will then consist of a finite tree, with finitely many
of the trees $\Tr_{\xtup, \lambda}(X)$ attached to it.

Now fix $\xtup \in \Zp^n$, and let $\lambda, \mu, \rho$ be as in the
prerequisites (possibly with $\lambda$ enlarged); we compute
the tree $\Tr_{\xtup,\lambda}(X)$.
To simplify notation, suppose $\xtup = 0$. If $0 \notin \clX$,
then $\ball{0,\lambda} \cap X = \emptyset$ and there is nothing to do,
thus suppose now $0 \in \clX$.
This implies $\ball{0, \kappa} \in \Tr_{0,\lambda}(X)$ for all
$\kappa \ge \lambda$. We take this as skeleton for $\Tr_{0,\lambda}(X)$,
with a joint at $\ball{0,\lambda}$ and then a single infinite bone.
It remains to determine the side branches.

Consider a garland $G$ for $\lambda, \mu, 1$ (converging to $0$).
It is the union of finitely
many garlands $G_i$ for $\lambda, \mu, \rho$, and
$\Tr_{G}(X)(\kappa) = \Tr_{G_i}(X)(\kappa)$ if $\kappa \in M(G_i)$.
Recall that $\Tr_{G_i}(X)(\kappa) \cong \Tr(\Zp) \times \tree_{G_i}(\kappa)$
and define $\tree_G(\kappa) := \tree_{G_i}(\kappa)$
if $\kappa \in M(G_i)$. We get that $\tree_G$
is piecewise of level $d$ and
$\Tr_{G}(X)(\kappa) \cong \Tr(\Zp) \times \tree_{G}(\kappa)$.
In other words, we may without loss suppose $\rho = 1$.

For each garland $G$, we have a finite partition
of $\{\kappa \in \Gamma \mid \kappa \ge \lambda\}$ such that
$\tree_{G}$ is of level $d$ on each set of the partition.
We choose a partition of $\{\kappa \in \Gamma \mid \kappa \ge \lambda\}$
such that for each part $M$, $\tree_{G}$ is of level $d$
on $M$ for all garlands $G$. Now we
claim that there is a single side
branch datum describing the side branch of $\Tr_{B_j}(X)$
leaving the skeleton at $\ball{0, \kappa}$ for all $\kappa \in M$.

Let $\fintree_\kappa$ be the subtree of $\Tr_{0,\kappa}(X)$
consisting of those $B = \ball{\xtup, \kappa + \nu}$ with
$0 \le \nu \le \mu$ and $0 \notin B$. Equivalently, $\fintree_\kappa$
is the finite subtree of $\Tr_{0,\kappa}(\Zp^n)$ whose leaves are exactly
the components $G_\kappa$ of those garlands $G$ satisfying $G_\kappa \cap X \ne
\emptyset$. For $G$ fixed,
this non-emptiness does not depend on $\kappa$ (as long as $\kappa \in M$),
so for two different $\kappa, \kappa' \in M$,
the map
\[
\{\xtup \mid v(\xtup) = \kappa\} \to \{\xtup \mid v(\xtup) = \kappa'\},
\xtup \mapsto p^{\kappa' - \kappa}\xtup
\]
induces (using Lemma~\ref{lem:treeIso}) an isomorphism from $\fintree_{\kappa}$ to
$\fintree_{\kappa'}$ sending $G_{\kappa}$ to $G_{\kappa'}$.

Now the side branch of
$\Tr_{B_j}(X)$ at $\ball{0, \kappa}$ consists of
$\fintree_\kappa$, with $\Tr_{G_\kappa}(X)$ attached
to the leaf $G_\kappa \in \fintree_\kappa$ (for $G_\kappa \cap X \ne
\emptyset$). As $\Tr_{G_\kappa}(X) \cong \Tr(\Zp) \times \tree_{G}(\kappa)$
with $\tree_{G}$ of level $d$, this proves the claim.
\end{proof}

\subsection{Definable functions on garlands}

The main result of this subsection (Proposition~\ref{prop:defPuiseux})
is that on sufficiently fine
one-dimensional garlands, a definable function is given by
a branch of a Puiseux series. We start by giving a meaning
to a specific branch of the $e$-th root function.

\begin{defn}
Suppose $G \subset \Qp$ is a garland for
$0, \lambda, \mu, \rho$, and suppose $e \in \bbN_{\ge 1}$.
We say that $G$ is \emph{fine enough for $e$-th roots} if $\mu \ge 2v(e) + 1$
and $e \mid \rho$.
Suppose that this is the case. Then a \emph{uniform choice of $e$-th roots
on $G$} is a choice of $\sqrt[e]{x} \in \aclQp$ for each $x \in G$
such that for any $x, x' \in G$ we have
$\frac{\sqrt[e]{x}}{\sqrt[e]{x'}} \in p^{\Gamma}\cdot (1 + p^{v(e)+1}\Zp)$.
\end{defn}

If $G$ is fine enough for $e$-th roots, then uniform choices of $e$-th
roots on $G$ exist. For any $x \in G$ choose any root $\sqrt[e]{x}$.
Then for any $x' \in G$ we have
$\frac{x'}{x} \in p^{e\cdot\nu}\cdot (1 + p^{2v(e)+1}\Zp)$ for some
$\nu \in \Gamma$; thus by Lemma~\ref{lem:root} (1), $\frac{x'}{x}$ has
a root $z \in p^{\nu}\cdot (1 + p^{v(e)+1}\Zp)$. Set $\sqrt[e]{x'} : = \sqrt[e]{x} \cdot
z$.

By ``choosing an $e$-th root on $G$'',
we will mean choosing $\sqrt[e]{x}$ uniformly as described above.
When we ask a garland to be fine enough
for $e$-th roots, we will often implicitly choose such a root.

If $G$ converges to $x_0 \ne 0$, by choosing an $e$-th root on $G$
we mean choosing $\sqrt[e]{x - x_0}$ for $x \in G$ in an analogous way.

These uniformly chosen roots are Lipschitz continuous in the following
sense:

\begin{lem}\label{lem:chooseRoot}
Suppose $e \in \bbN_{\ge1}$ and $G$ is a garland converging to $0$
which is fine enough for $e$-th roots. If
$x, x' \in G$ satisfy $x \etwa[\delta + v(e)] x'$
for some $\delta \ge 1$, then $\sqrt[e]{x} \etwa \sqrt[e]{x'}$,
and more generally $\sqrt[e]{x}^\iota \etwa \sqrt[e]{x'}^\iota$
for any $\iota \in \bbZ$.
\end{lem}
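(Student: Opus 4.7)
The plan is to reduce everything to the behaviour of the ratio $z := \sqrt[e]{x}/\sqrt[e]{x'} \in \aclQp$. By the definition of a uniform choice of $e$-th roots on $G$, we have $z \in p^\Gamma \cdot (1 + p^{v(e)+1}\Zp)$, and since $v(x) = v(x')$ (because $x \etwa[\delta+v(e)] x'$ with $\delta \ge 1$, so certainly $v(x-x') > v(x)$), we in fact get $v(z) = 0$ and hence $z \in 1 + p^{v(e)+1}\Zp$. The hypothesis $x \etwa[\delta+v(e)] x'$ translates into $z^e = x/x' \in 1 + p^{\delta+v(e)}\Zp$. So the real content is to pass from $v(z^e - 1) \ge \delta + v(e)$ to $v(z - 1) \ge \delta$.

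I would do this by factoring $z^e - 1 = (z-1)\sum_{i=0}^{e-1} z^i$ and observing that, since $z \in 1 + p^{v(e)+1}\Zp$, each $z^i$ is congruent to $1$ modulo $p^{v(e)+1}$, so the second factor is $\equiv e \pmod{p^{v(e)+1}}$ and hence has valuation exactly $v(e)$. Dividing gives $v(z - 1) = v(z^e - 1) - v(e) \ge \delta$, which is a sharpening of the injectivity statement in Lemma~\ref{lem:root}(1). From $\sqrt[e]{x} - \sqrt[e]{x'} = \sqrt[e]{x'}\cdot(z - 1)$ and $v(\sqrt[e]{x'}) = v(\sqrt[e]{x})$, the conclusion $\sqrt[e]{x} \etwa \sqrt[e]{x'}$ is immediate.

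For the version with $\iota \in \bbZ$, I would simply write $\sqrt[e]{x}^\iota - \sqrt[e]{x'}^\iota = \sqrt[e]{x'}^\iota (z^\iota - 1)$ and note that $v(z^\iota - 1) \ge v(z - 1) \ge \delta$: for $\iota \ge 1$ one factors $z^\iota - 1 = (z-1)(z^{\iota-1} + \dots + 1)$ where the second factor is integral, for $\iota \le -1$ one uses $z^\iota - 1 = -z^\iota(z^{-\iota} - 1)$ together with $v(z) = 0$, and $\iota = 0$ is trivial. There is no real obstacle; the only thing to keep an eye on is the bookkeeping that ensures $z$ truly lands in $1 + p^{v(e)+1}\Zp$ rather than just in $p^\Gamma \cdot(1 + p^{v(e)+1}\Zp)$, which is why the garland being fine enough for $e$-th roots (in particular $\mu \ge 2v(e)+1$) is used and why $\delta \ge 1$ is assumed.
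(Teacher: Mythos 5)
Your proof is correct and follows essentially the same route as the paper: reduce to the ratio $z=\sqrt[e]{x}/\sqrt[e]{x'}$, use the uniform-choice condition to place $z$ in $1+p^{v(e)+1}\Zp$, and pass from $z^e\in 1+p^{\delta+v(e)}\Zp$ to $z\in 1+p^\delta\Zp$. The paper simply cites the bijection of Lemma~\ref{lem:root}(1) for this last step and treats the $\iota$-power case as immediate, whereas you supply the direct factorization $z^e-1=(z-1)\sum_{i<e}z^i$ and the explicit case split on the sign of $\iota$; this is a fair, more self-contained rendering of the same argument.
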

\begin{proof}
$x \etwa[\delta + v(e)] x'
\Leftrightarrow \frac{x}{x'} \in 1+p^{\delta+v(x)}\Zp
\Rightarrow \frac{\sqrt[e]{x}}{\sqrt[e]{x'}} \in 1+p^{\delta}\Zp
\Rightarrow \left(\frac{\sqrt[e]{x}}{\sqrt[e]{x'}}\right)^\iota \in
1+p^{\delta}\Zp
\Leftrightarrow\sqrt[e]{x}^\iota \etwa \sqrt[e]{x'}^\iota$.
\end{proof}

Note that if $x,x'$ lie in the same component of $G$
(and $G$ is fine enough for $e$-th roots),
we may always apply the lemma with
$\delta = v(x - x') - v(x) - v(e) \ge 1$.

\medskip

We will need the following two results relating garlands
and definable sets.

\begin{lem}\label{lem:garlandDef}
\begin{enumerate}
\item
Garlands are definable.
\item
If we chose an $e$-th root on a garland $G \subset \Zp$ and this
root lies in $\Qp$,
then $x \mapsto \sqrt[e]{x}$ is definable.
\end{enumerate}
\end{lem}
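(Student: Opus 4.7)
My plan is to express both parts as explicit first-order formulas in the two-sorted language, using the cross-section $\kappa \mapsto p^\kappa$ freely (it is available in our language) and allowing parameters everywhere.

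For (1), I would simply unfold the definition of a garland. A point $\xtup$ lies in $G$ if and only if there exists $\kappa \in \Gamma$ such that $\kappa \ge \lambda$, $\kappa \equiv \kappa_0 \pmod \rho$ (where $\kappa_0$ is any fixed representative of $\ccGamma$), $v(\xtup - \xtup_0) = \kappa$, and $v((\xtup - \xtup_0) - p^\kappa \xtup_G) \ge \kappa + \mu$. The first three conditions place $\kappa$ at the right depth and in the right congruence class; the last is just the assertion that $\xtup - \xtup_0 \in p^\kappa \ball{\xtup_G, \mu}$. All pieces use only the ring language on $\Qp$, the ordered abelian group language on $\Gamma$, the valuation, and the cross-section, with $\xtup_0, \xtup_G, \lambda, \mu, \rho, \kappa_0$ serving as parameters.

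For (2), the idea is to fix one base point $x_1 \in G$ together with its chosen root $y_1 := \sqrt[e]{x_1} \in \Qp$, and then characterize $\sqrt[e]{x}$ for every other $x \in G$ intrinsically. The uniform-choice condition says exactly that $\sqrt[e]{x}$ is \emph{some} $y \in \Qp$ satisfying
\[
y^e = x \quad \text{and} \quad y/y_1 \in p^\Gamma \cdot (1 + p^{v(e)+1}\Zp).
\]
The second clause unfolds to $v(y/y_1 - p^{v(y/y_1)}) \ge v(y/y_1) + v(e) + 1$, which is plainly definable.

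The one step that actually needs argument, and which I expect to be the main (and only) obstacle, is uniqueness: one must check that the formula above picks out a single $y$. I would verify this by taking two candidates $y', y''$ and observing that $z := y'/y''$ is an $e$-th root of unity lying in $p^\Gamma \cdot (1 + p^{v(e)+1}\Zp)$; since $v(z) = 0$, in fact $z \in 1 + p^{v(e)+1}\Zp$. Lemma~\ref{lem:root}(1) applied with $\delta = v(e)+1$ tells us that $w \mapsto w^e$ is a bijection $1 + p^{v(e)+1}\Zp \to 1 + p^{2v(e)+1}\Zp$, so from $z^e = 1$ we get $z = 1$. Once this is settled, the displayed formula defines the function $x \mapsto \sqrt[e]{x}$ on $G$.
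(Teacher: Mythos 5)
Your proposal is correct and follows essentially the same route as the paper: part~(1) is just unfolding the definability of garlands (the paper cites Denef's Lemma~2.1 for this), and part~(2) pins down $\sqrt[e]{x}$ as the unique $e$-th root of $x$ lying in $y_1\cdot p^\Gamma\cdot(1+p^{v(e)+1}\Zp)$, exactly as in the paper's proof. The only difference is that you spell out the uniqueness step via Lemma~\ref{lem:root}(1), which the paper leaves implicit; your verification of it is correct.
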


Note that whether $\sqrt[e]{x}$ lies in $\Qp$ does not depend
on the specific $x \in G$.

\begin{proof}[Proof of Lemma~\ref{lem:garlandDef}]
(1)
Well known; see e.g.\ \cite{Den:cell}, Lemma~2.1, 3) and 4).

(2)
We only need to specify in a definable way which of the roots we want to
take. If $z_0$ is the root of one element of $G$, then
the other ones are exactly the ones lying in
$z_0 \cdot p^{\Gamma}\cdot \ball{1,p^{v(e) + 1}}$. This is definable by
the same argument as for (1).
\end{proof}

\begin{lem}\label{lem:1dimLokal}
Let $X \subset \Qp$ be definable and $x_0 \in \Qp$.
Then there exist $\lambda, \mu, \rho$ such that any corresponding
garland converging to $x_0$ lies either completely inside or
completely outside of $X$.
\end{lem}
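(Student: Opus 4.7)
The plan is to reduce the lemma, via cell decomposition (Lemma~\ref{lem:cell}), to the case that $X$ is a single cell $C \subset \Qp$. Since the cell decomposition is finite and the cells partition $\Qp$, it suffices to produce $\lambda, \mu, \rho$ for each cell $C$; taking the maximum of the $\lambda, \mu$ and the least common multiple of the $\rho$ then gives the required constants for $X$. By translating, we may assume $x_0 = 0$. A cell $C \subset \Qp$ has the form
\[
C = \{y \in \Qp \mid \alpha \conda v(y - c) \condb \beta,\ \exists z\; y - c = rz^e\}
\]
with fixed data $c, r \in \Qp$, $\alpha, \beta \in \Gamma \cup \{\infty\}$, $e \in \bbN_{\ge 1}$, so the task reduces to showing that for sufficiently large $\lambda$, $\mu \ge 2v(e)+1$, and $\rho$ a multiple of $e$, both defining conditions of $C$ are constant on every corresponding garland $G$.

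For the valuation bound $\alpha \conda v(y-c) \condb \beta$, I split on whether $c = 0$. If $c \ne 0$, then for any $\lambda > v(c)$ and every $y \in G$ one has $v(y-c) = v(c)$, so the condition has a fixed truth value. If $c = 0$, then $v(y) = \kappa$ for $y$ in the component $G_\kappa$, and by enlarging $\lambda$ beyond $\alpha$ and, when finite, $\beta$, the inequality $\alpha \conda \kappa \condb \beta$ is uniformly true or uniformly false on the set $\{\kappa \ge \lambda\}$.

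For the $e$-th power condition $(y - c)/r \in (\Qp\mult)^e$, I would use that by Lemma~\ref{lem:root}(2), membership in $(\Qp\mult)^e$ depends only on the valuation modulo $e$ and on the class of the angular component in $\Zp\mult / (1+p^{2v(e)+1}\Zp)$. If $c \ne 0$: as soon as $\lambda - v(c) \ge 2v(e)+1$, every $y \in G$ satisfies $v((y-c)-(-c)) = v(y) \ge \lambda$, hence $y - c \etwa[2v(e)+1] -c$, and the condition is pinned down by $-c/r$. If $c = 0$: writing $y = p^\kappa u$ with $u \in \ball{x_G, \mu}$ and $v(x_G) = 0$, one has $v(y/r) = \kappa - v(r) \in e\Gamma$ iff $\kappa \equiv v(r) \pmod e$, which is constant on $G$ because $e \mid \rho$ and $\kappa$ ranges in one class modulo $\rho$; and the condition $\mu \ge 2v(e)+1$ forces $u$, and hence the angular component of $y/r$, to have a fixed residue modulo $\ball{1,2v(e)+1}$.

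There is no serious obstacle here; the argument is a case analysis on $c = 0$ versus $c \ne 0$, and the only thing to check is that the two relevant invariants (valuation modulo $e$ and angular component modulo a small ball) become rigid once $\lambda, \mu, \rho$ are taken large enough in the sense above.
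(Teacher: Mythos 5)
Your proof is correct and follows essentially the same approach as the paper: reduce to a single cell, then observe that if $x_0$ differs from the center (or, in the paper's phrasing, if $\beta < \infty$) the defining conditions stabilize on a small ball, and in the remaining case use that the valuation modulo $e$ and the angular component modulo $1+p^{2v(e)+1}\Zp$ are constant on a sufficiently fine garland. The paper's version is terser but the case analysis and the appeal to Lemma~\ref{lem:root} are the same.
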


\begin{proof}
It is enough to prove the statement
when $X$ is a cell. If $x_0$ is not equal to the center of the cell,
or if the cell has an upper bound $\beta < \infty$, then
a whole ball $\ball{x_0, \lambda}$ lies either completely inside or completely
outside of $X$. Otherwise
choose $\lambda > \alpha$ (the lower bound) and use that the $e$-th
power residue on sufficiently fine garlands is constant.
\end{proof}

The two principal ingredients to our description of definable functions
on sufficiently fine garlands are
a lemma of Scowcroft and van den Dries which will allow us to
replace definable functions by branches of algebraic sets, and
the theorem of Puiseux which will allow us to describe such branches
in terms of branches of root functions.

\begin{lem}[Lemma 1.2 of \cite{SD:dim} and comment following its proof]
\label{lem:def2alg}
For any definable $X \subset \Qp$ and any definable function
$f \colon X \to \Qp$, the graph of $f$
is a subset of an algebraic curve.
\end{lem}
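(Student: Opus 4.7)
The plan is to apply Macintyre's quantifier elimination for $\Qp$ in the language of rings augmented with the power predicates $P_n(x)$, meaning $\exists y\, y^n = x$, for $n \ge 2$. The graph of $f$ is a definable subset of $\Qp^2$, so Macintyre's theorem furnishes a quantifier-free defining formula; converting this formula to disjunctive normal form writes the graph as a finite union $\bigcup_i Y_i$, where each $Y_i$ is cut out by a conjunction of polynomial equations $g(x,y) = 0$, polynomial inequations $g(x,y) \ne 0$, and (possibly negated) conditions $P_n(h(x,y))$.

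The key observation I would exploit is that the graph of $f$ has empty $p$-adic interior in $\Qp^2$: any ball $B \times B' \subset \Qp^2$ contained in the graph would force $f(x) = y$ for all $(x, y) \in B \times B'$, contradicting single-valuedness of $f$. Hence the same must hold for each piece $Y_i$.

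From here I would argue as follows. First refine each disjunct by splitting, for every polynomial $h$ appearing inside a power predicate of the disjunct, on whether $h(x,y) = 0$ or $h(x,y) \ne 0$. This either absorbs $h$ into a new equation $h = 0$ of the refined disjunct, or restricts it to the open locus $\{h \ne 0\}$, on which both $P_n(h)$ and $\neg P_n(h)$ become $p$-adically open conditions, because the non-zero $n$-th powers $(\Qp\mult)^n$ form an open subgroup of $\Qp\mult$ by Hensel's lemma. Combined with the open conditions $g \ne 0$, this shows that any refined disjunct containing no non-trivial polynomial equation defines a $p$-adically open subset of $\Qp^2$; by the key observation, such a subset must be empty. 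The surviving disjuncts each contain a non-trivial equation $g_i(x,y) = 0$, so they lie in the algebraic curve $V(g_i) \subset \aclQp^2$. Taking the union, the graph of $f$ is contained in $\bigcup_i V(g_i)$, a finite union of algebraic curves of dimension at most $1$ in $\aclQp^2$, which is itself an algebraic curve.

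The step I expect to be the main obstacle is the refinement by power conditions: confirming that after the case split, every remaining condition is either absorbed into a polynomial equation or defines a $p$-adically open condition on the locus in question. This is routine once one invokes the openness of $(\Qp\mult)^n$ in $\Qp\mult$, but it is the place where the bookkeeping must be done carefully.
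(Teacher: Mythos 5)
The paper offers no proof of this lemma; it is quoted verbatim from Scowcroft--van den Dries (Lemma~1.2 and the remark following its proof), so there is no in-paper argument to compare against. Your self-contained proof is correct, and it is (as far as one can tell without opening the cited source) the standard argument: Macintyre quantifier elimination, pass to disjunctive normal form, split on $h=0$ versus $h\ne0$ for every argument $h$ of a power predicate, use that $(\Qp\mult)^n$ is an open (hence also closed) subgroup of $\Qp\mult$ to see that every condition surviving on $\{h\ne0\}$ is $p$-adically open, and finally use that the graph of a $\Qp$-valued function has empty interior in $\Qp^2$ to kill all purely open disjuncts.

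Two small touch-ups would make the write-up airtight. When you set $h=0$ in the case split, say explicitly that each power condition $P_n(h)$ or $\neg P_n(h)$ then resolves to a fixed truth value (since $0$ is an $n$-th power), so the power predicate either disappears or empties the disjunct --- nothing beyond the new equation $h=0$ survives. And rather than ending with a finite union $\bigcup_i V(g_i)$, you may as well set $g := \prod_i g_i$ after discarding empty disjuncts (so each $g_i$ is non-constant) and conclude that the graph lies in the single plane curve $V(g)$; this matches how the lemma is consumed later in the Puiseux step, which expects one algebraic curve.
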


\begin{lem}[Theorem of Puiseux; see e.g.\ \cite{Eich:alg}, III.1.6]\label{lem:puiseux}
Let $V(\Qp) \subset \Qp^2$ be an algebraic curve.
Then there exists $\lambda \in \Gamma$,
a finite index set $N$, integers $e_{\nu} \ge 1$
and coefficients $a_{\nu,i} \in \aclQp$ for $i \in \bbZ$ and $\nu \in N$,
such that the following holds.
\begin{enumerate}
\item
For each $\nu \in N$, $a_{\nu,i} = 0$ for $i \ll 0$, and the Laurent series
\[
g_\nu(z) = \sum_{i \in \bbZ}a_{\nu,i}z^i
\]
converges for any $z \in \aclQp$ satisfying $v(z^{e_\nu}) \ge \lambda$.
\item
For any $(x,y) \in p^\lambda \Zp \times \Qp$, we have
$(x,y) \in V(\Qp)$
if and only if there exists a $\nu \in N$ and a root $\sqrt[e_\nu]{x} \in \aclQp$
such that $y = g_{\nu}(\sqrt[e_\nu]{x})$.
\end{enumerate}
\end{lem}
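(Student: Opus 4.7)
The plan is to apply the classical theorem of Puiseux to the defining polynomial of $V$ over the formal Laurent series field and then upgrade formal convergence to $p$-adic convergence via a Newton polygon estimate.

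First, fix a squarefree polynomial $F(X,Y) \in \Qp[X,Y]$ cutting out $V$ (taking the product of irreducible factors if $V$ is reducible). Viewing $F$ as a polynomial in $Y$ with coefficients in $\Qp[X] \subset \Qp((X))$, factor it over the algebraic closure of $\Qp((X))$. The classical Puiseux theorem in characteristic zero identifies this algebraic closure with $\bigcup_{e\ge 1}\aclQp((X^{1/e}))$. Grouping the roots in $Y$ according to the irreducible factors of $F$ over $\Qp((X))$ yields finitely many Galois orbits; the roots in each orbit can be written as $g_\nu(\zeta^k X^{1/e_\nu})$ for a single formal Laurent series $g_\nu(z) = \sum_i a_{\nu,i} z^i$ with $a_{\nu,i} \in \aclQp$, an integer $e_\nu \ge 1$, and $\zeta$ a primitive $e_\nu$-th root of unity. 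Taking one series per orbit produces the combinatorial data $N$, $e_\nu$, $a_{\nu,i}$.

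The main obstacle is passing from formal to $p$-adic convergence: one must find a single $\lambda$ so that each $g_\nu(z)$ converges whenever $v(z^{e_\nu}) \ge \lambda$. The right tool is the Newton polygon of $F$, applied iteratively (first with respect to the $X$-adic valuation on $\Qp((X))$, then after substitutions $X \mapsto X^{1/e_\nu}$ to peel off the fractional exponents). This delivers a linear lower bound $v(a_{\nu,i}) \ge \alpha_\nu i + \beta_\nu$ for all sufficiently large $i$; the finitely many negative-index terms present for each $\nu$ need no uniform estimate. Choosing $\lambda$ large enough that $\lambda/e_\nu + \alpha_\nu > 0$ for every $\nu \in N$ and larger than the smallest exponent appearing in any $g_\nu$ (multiplied by $e_\nu$), we force $v(a_{\nu,i}z^i) \to +\infty$ whenever $v(z^{e_\nu}) \ge \lambda$, giving the required $p$-adic convergence.

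With convergence in hand, part (2) is a direct consequence of the factorization. If $(x,y) \in V(\Qp)$ with $v(x) \ge \lambda$, then specializing the factorization at $X = x$ (with some choice of $\sqrt[e_\nu]{x} \in \aclQp$ for each $\nu$) expresses $F(x,Y)$ as a constant times a product of linear factors $Y - g_\nu(\sqrt[e_\nu]{x})$ as $\nu$ and the root vary; since $y$ is a zero of $F(x,\cdot)$, it must coincide with one of these values. Conversely, any such $g_\nu(\sqrt[e_\nu]{x})$ is a zero of $F(x,\cdot)$ by construction, and therefore a point of $V(\Qp)$ as soon as it lies in $\Qp$, which is the standing hypothesis on $y$.
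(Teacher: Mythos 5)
The paper does not prove this lemma; it cites Eichler III.1.6, so there is no internal proof to compare against, and I will assess your argument on its own terms. Your overall architecture is the standard one: factor $F(X,Y)$ over $\overline{\Qp((X))}$ via the formal Puiseux theorem in characteristic zero, promote formal to $p$-adic convergence, and specialize at $X = x$ to read off part (2). The first and last steps are essentially fine (one small imprecision: the Galois orbit of a root over $\Qp((X))$ is not in general $\{g_\nu(\zeta^k X^{1/e_\nu})\}_k$ for a single series $g_\nu$, since Galois also acts on the coefficients $a_{\nu,i}\in\aclQp$; but the lemma only needs an enumeration of the branches, so you can simply let $N$ index all the roots modulo the $\zeta$-action).

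The gap is the convergence step, which is the whole content of the lemma once formal Puiseux is granted. You claim that the Newton polygon of $F$, applied iteratively, delivers a linear lower bound $v(a_{\nu,i}) \ge \alpha_\nu i + \beta_\nu$. But the Newton polygon used in the Newton--Puiseux algorithm is built from the $X$-adic valuation on $\Qp((X))$; it controls the exponents of $X$ (equivalently the $X$-adic order of the branches) and is blind to the $p$-adic valuations of the coefficients $a_{\nu,i}$. That an algebraic Puiseux series over a complete non-archimedean field has positive $p$-adic radius of convergence is true, but it is a theorem requiring its own argument --- typically via a Hensel/implicit-function argument after reducing to a simple root of $F(0,\cdot)$, or via majorization in the ring of convergent power series, or by invoking that this ring is Henselian and algebraically closed in $\aclQp[[z]]$. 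As written, your convergence paragraph restates the desired estimate and attributes it to a tool that does not supply it.
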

% In the following, when we consider a series summing over all $i \in \bbZ$,
% we implicitly assume that the coefficients are $0$ for $i \ll 0$.

Now here is the main result of this subsection.

\begin{prop}\label{prop:defPuiseux}
Let $D \subset \Qp \ohne \{0\}$ be definable and let $f\colon D \to \Qp$ be a definable function.
Then there are $e, \lambda, \mu, \rho$
such that $D \cap \ball{0,\lambda}$ is a union of garlands corresponding to
$0, \lambda, \mu, \rho$, and
such that for each such garland $G \subset D$ the following holds.
$G$ is fine enough for $e$-th roots, and $f$ can be written as a 
convergent Laurent series in $\sqrt[e]{x}$, with coefficients $a_i \in \aclQp$:
\[
f(x) = \sum_{i \in \bbZ}a_i\sqrt[e]{x}^i
\]
for all $x \in G$.
\end{prop}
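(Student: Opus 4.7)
The plan is to pass from $f$ to an algebraic curve via Lemma~\ref{lem:def2alg}, describe this curve as a finite union of Puiseux branches via Lemma~\ref{lem:puiseux}, and on a sufficiently fine garland select a single branch using Lemma~\ref{lem:1dimLokal}. In detail, I would first use Lemma~\ref{lem:def2alg} to embed the graph of $f$ in an algebraic curve $V \subset \Qp^2$, then apply Puiseux to obtain a finite index set $N$, exponents $e_\nu \ge 1$ and Laurent series $g_\nu(z) = \sum_i a_{\nu,i} z^i$ with $a_{\nu,i} \in \aclQp$ converging for $v(z^{e_\nu}) \ge \lambda_0$, so that on $\ball{0,\lambda_0}$ the curve $V(\Qp)$ is the finite union of the graphs of $x \mapsto g_\nu(\sqrt[e_\nu]{x})$ over all $\nu \in N$ and all $e_\nu$-th roots. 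Setting $e := \mathrm{lcm}_\nu e_\nu$ and defining $h_{\nu,\zeta}(z) := g_\nu(\zeta z^{e/e_\nu})$ for each $\nu$ and each $e_\nu$-th root of unity $\zeta \in \aclQp$, every Puiseux branch is realised as $x \mapsto h_{\nu,\zeta}(\sqrt[e]{x})$ for one single fixed choice of $e$-th root, which is the shape required by the proposition.

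Next I would take $\lambda \ge \lambda_0$, $\mu \ge 2v(e)+1$ and $e \mid \rho$ so that every garland for $(0,\lambda,\mu,\rho)$ is fine enough for $e$-th roots; after applying Lemma~\ref{lem:1dimLokal} to $D$ itself I may also assume that each such garland lies either entirely inside or entirely outside $D$ and treat only the inside case. Fix such a garland $G \subset D$ together with a uniform choice of $e$-th root on it. Since the graph of $f$ is contained in $V$, each $x \in G$ satisfies $f(x) = h_{\nu,\zeta}(\sqrt[e]{x})$ for at least one of the finitely many pairs $(\nu,\zeta)$; I want the pair to be chosen uniformly in $x$. To that end I consider the matching sets
\[
E_{\nu,\zeta} := \{x \in G \mid f(x) = h_{\nu,\zeta}(\sqrt[e]{x})\},
\]
which finitely cover $G$. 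Once each $E_{\nu,\zeta}$ is known to be $\Qp$-definable, applying Lemma~\ref{lem:1dimLokal} to all of them and taking the common refinement of the resulting garland parameters ensures that every garland for the refined parameters lies inside a single $E_{\nu,\zeta}$. On such a garland the identity $f(x) = h_{\nu,\zeta}(\sqrt[e]{x})$ holds throughout, which by construction is a convergent Laurent series in $\sqrt[e]{x}$ with coefficients $\zeta^i a_{\nu,i} \in \aclQp$, giving the desired representation.

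The hard part will be the $\Qp$-definability of the sets $E_{\nu,\zeta}$, since the right-hand expression $h_{\nu,\zeta}(\sqrt[e]{x})$ a priori lives only in $\aclQp$. The cleanest route I see is to work instead with a $\Qp$-polynomial $P(X,Y) \in \Qp[X,Y]$ cutting out $V$: for each $x \in G$ the value $f(x)$ is one of the finitely many $\Qp$-roots of $P(x,Y) = 0$, and by truncating the Puiseux expansions at sufficiently high order one sees that these roots can be separated $\Qp$-definably from each other by their pairwise valuative distances, all of which become determined once $v(x)$ is large compared to the ramification data. Partitioning $G$ according to which $\Qp$-root of $P(x,Y)$ equals $f(x)$ then produces $\Qp$-definable pieces, on each of which the matching pair $(\nu,\zeta)$ is fixed up to a single Galois orbit; since the proposition only requires coefficients in $\aclQp$, picking any representative of the orbit suffices. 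An alternative would be to carry out the entire argument inside a finite extension $K \supset \Qp$ containing all the $a_{\nu,i}$ and the relevant roots of unity, do the partitioning there, and descend using that $f(x) \in \Qp$ forces the selected branch to be $\mathrm{Gal}(K/\Qp)$-stable.
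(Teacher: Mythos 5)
Your proposal follows essentially the same route as the paper's proof: pass to an algebraic curve via Lemma~\ref{lem:def2alg}, apply Puiseux (Lemma~\ref{lem:puiseux}), pass to $e = \mathrm{lcm}_\nu e_\nu$, partition $D \cap \ball{0,\lambda}$ into pieces on which $f$ matches a single branch, and refine the garland parameters via Lemma~\ref{lem:1dimLokal}. The only place you leave open — $\Qp$-definability of the matching pieces — is exactly the step the paper makes concrete: it truncates each branch $s = \sum_i b_i \sqrt[e]{x}^i$ to a Laurent polynomial $s_\tau = \sum_{i \le \iota} b_i \sqrt[e]{x}^i$ with $\iota$ large enough that distinct branches have distinct truncations, and then observes that for $v(x) \gg 0$ the piece $\{x \in G : f(x) = s(x)\}$ is cut out by the condition $v(f(x)-s_\tau(x)) > v(f(x)-s'_\tau(x))$ for all $s' \ne s$, which is definable and automatically gives disjointness; your first sketch (separating the $\Qp$-roots of $P(x,Y)=0$ by valuative distances of truncated Puiseux expansions) is the same idea in slightly different clothing, so your argument is sound.
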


Note that the specific choice of an $e$-th root on $G$ does not matter;
to compensate for a change of root, multiply each $a_i$ by an appropriate
power of an $e$-th root of unity.

\begin{proof}
Choose $\lambda, \mu, \rho$ large enough such that $D \cap \ball{0,\lambda}$
is a union of corresponding garlands converging to $0$
(use Lemma~\ref{lem:1dimLokal}).
Let $V(\Qp) \subset \Qp^2$ be the algebraic curve
containing the graph of $f$ according to Lemma~\ref{lem:def2alg},
and apply Lemma~\ref{lem:puiseux} to $V$. Enlarge $\lambda$ such that
the conclusion of Lemma~\ref{lem:puiseux} holds on $\ball{0,\lambda}$.
Then for any $x \in D \cap \ball{0,\lambda}$, there exists a $\nu \in N$ and
an $e_\nu$-th root of $x$ such that
\[
f(x) = \sum_{i \in \bbZ}a_{\nu,i}\sqrt[e_\nu]{x}^i
.
\]
This statement remains true if we replace all $e_\nu$ by their least common
multiple and renumber the coefficients $a_{\nu,i}$ accordingly.

Now choose a primitive $e$-th root of unity $\zeta$,
enlarge $\mu$ and $\rho$ such that corresponding garlands
are fine enough for $e$-th roots, and choose an $e$-th root on each of them.
Define the set of formal Laurent series
\[
S := \left\{ \sum_{i \in \bbZ}a_{\nu,i}(\zeta^j \sqrt[e]{x})^i
  \in \aclQp[[\sqrt[e]{x}]] \,\bigg|\,  \nu \in N, 0 \le j < e
     \right\}
,
\]
and for $G \subset D$ and $s \in S$, set
$A_{G,s} := \{x \in G \mid f(x) = s(x)\}$. The union of these sets
is equal to $D \cap \ball{0,\lambda}$. We claim that after enlarging $\lambda$,
we may suppose that they are definable and disjoint.

For $s = \sum_{i}b_i\sqrt[e]{x}^i \in S$,
let $s_\tau := \sum_{i \le \iota}b_i\sqrt[e]{x}^i$ be the
corresponding truncated series, where $\iota$ is large enough such that
$s \ne s'$ implies $s_\tau \ne s'_\tau$ for any $s, s' \in S$.
Then for $v(x) \gg 0$, we have $v(s(x) - s_\tau(x)) > v(s_\tau(x) - s'_\tau(x))$
for any two different $s, s' \in S$, so we get that
$x \in A_{G,s}$ if and only if
$x \in G$ and $v(f(x)-s_\tau(x)) > v(f(x)-s'_\tau(x))$ for all $s' \in S \ohne
\{s\}$. This condition is definable and implies disjointness.

So now we have a finite definable partition $(A_{G,s})$ of $D \cap \ball{0,\lambda}$.
To finish the proof, enlarge $\lambda, \mu, \rho$ again such that 
any of the finer garlands is completely contained in one of the
sets $A_{G,s}$; on each of those finer garlands we have
$f(x) = s(x) = \sum_i b_i \sqrt[e]{x}$.
\end{proof}

We will need an analogue of the previous proposition for 
definable functions going to $\Gamma \cup \{\infty\}$;
we get it as a corollary of the previous proposition, although
the heavy machinery of Proposition~\ref{prop:defPuiseux} is not really
necessary. (It could, for example, also be deduced from
Corollary~6.5 of \cite{Den:rat} together with Lemma~\ref{lem:1dimLokal}.)

\begin{cor}\label{cor:defPuiseux}
Let $D \subset \Qp$ be a definable set and $\alpha\colon D \to \Gamma \cup \{\infty\}$
a definable function. Then there are $\lambda, \mu, \rho$
such that
on each garland $G \subset D$ corresponding to
$0, \lambda, \mu, \rho$, $\alpha(x)$ only depends on $v(x)$, and the function
$M(G) \to \Gamma \cup \{\infty\}, v(x) \mapsto \alpha(x)$ is linear.
\end{cor}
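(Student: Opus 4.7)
The plan is to reduce to Proposition~\ref{prop:defPuiseux} by encoding the $\Gamma$-valued function $\alpha$ as a $\Qp$-valued one. I define $f\colon D \cap (\Qp\ohne\{0\}) \to \Qp$ by $f(x) := p^{\alpha(x)}$ when $\alpha(x) \in \Gamma$ and $f(x) := 0$ when $\alpha(x) = \infty$. The cross section $\lambda \mapsto p^\lambda$ is definable and both cases are cut out by definable conditions, so $f$ is definable; by construction $v(f(x)) = \alpha(x)$.

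Applying Proposition~\ref{prop:defPuiseux} to $f$ yields $e, \lambda, \mu, \rho$ such that on each garland $G \subset D$ corresponding to $0, \lambda, \mu, \rho$ there are coefficients $a_i \in \aclQp$ (with $a_i = 0$ for $i \ll 0$) satisfying $f(x) = \sum_{i \in \bbZ} a_i \sqrt[e]{x}^i$ for all $x \in G$. If every $a_i$ vanishes then $f \equiv 0$ on $G$, hence $\alpha \equiv \infty$, which is linear by convention. Otherwise let $i_0$ be the smallest index with $a_{i_0} \ne 0$. After possibly enlarging $\lambda$, I claim the $i_0$-term strictly dominates in valuation, so that
\[
\alpha(x) = v(f(x)) = v(a_{i_0}) + \frac{i_0}{e}\, v(x)
\]
for every $x \in G$. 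This expression depends only on $v(x)$, and as a function of $v(x) \in M(G)$ it is affine with rational coefficients; moreover it takes values in $\bbZ = \Gamma$ because $e \mid \rho$ (forcing $v(a_{i_0})$ itself to lie in $\bbQ$). This matches the paper's definition of linearity, so both conclusions of the corollary follow.

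The only nontrivial point is the uniform dominance estimate
\[
v(a_i) + \frac{i}{e}\kappa \;>\; v(a_{i_0}) + \frac{i_0}{e}\kappa
\qquad \text{for all } i > i_0 \text{ with } a_i \ne 0,
\]
once $\kappa = v(x)$ is large enough. For any single such $i$ the inequality is immediate for $\kappa$ past a threshold depending on $i$; the issue is uniformity in $i$. This I would handle using convergence of the Laurent series at a single fixed point $\kappa_1 \in M(G)$: it forces $v(a_i) + \frac{i}{e}\kappa_1 \to \infty$ as $i \to \infty$, which gives a lower bound on $v(a_i) + \frac{i}{e}\kappa$ for $\kappa > \kappa_1$ that eventually exceeds $v(a_{i_0}) + \frac{i_0}{e}\kappa$ for all $i \ge i_{\max}$. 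The remaining finitely many indices $i_0 < i < i_{\max}$ are dealt with by one final enlargement of $\lambda$. This convergence-based uniformity is really the only obstacle; once in hand, the corollary is immediate.
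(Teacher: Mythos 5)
Your proof is correct and takes essentially the same route as the paper: write $\alpha = v \circ f$ for a definable $f$, apply Proposition~\ref{prop:defPuiseux}, and read off $v(f(x))$ as the valuation of the leading term of the Puiseux series for $v(x)$ large. You make explicit two points the paper elides — the construction of $f$ via the cross section (the paper just says ``write $\alpha$ as $v \circ f$'') and the uniformity in $i$ of the dominance estimate — but these are details filled in, not a different argument.
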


\begin{proof}
Write $\alpha$ as $v \circ f$ for some definable $f\colon D \to \Qp$.
Apply Proposition~\ref{prop:defPuiseux} to get $f(x) = \sum_{i}a_i\sqrt[e]{x}^i$, and
let $\iota$ be minimal such that $a_\iota \ne 0$.
If $v(x)$ is sufficiently large, then
$v(f(x)) = v(a_\iota\sqrt[e]{x}^\iota) = v(a_\iota) + \frac{\iota}{e}v(x)$,
so choose $\lambda$ accordingly.
\end{proof}

\medskip

To conclude this subsection, we prove to two general statements on Puiseux series
which we will need later.

\begin{lem}\label{lem:series}
Suppose that $G$ is a garland for $0, \lambda, \mu, \rho$ which is fine enough
for $e$-th roots and that the Laurent series
\[
f(x) = \sum_{i \in \bbZ} a_i\sqrt[e]{x}^i
\]
(with coefficients $a_i \in \aclQp$) converges on $G$.
\begin{enumerate}
\item\label{it:sQp}
If $f(x) \in \Qp$ for all $x \in G$, then
$a_i \sqrt[e]{x} \in \Qp$ for all $x \in G$ and all $i \in \bbZ$.
\item\label{it:sflat}
If $v(f(x)) \ge v(x)$ for all
$x \in G$, then there exists a $\lambda' \ge \lambda$
such that for all $x_1, x_2 \in G$ with $v(x_1) = v(x_2) \ge \lambda'$,
we have $v(f(x_2) - f(x_1)) \ge v(x_2 - x_1)$.
\end{enumerate}
\end{lem}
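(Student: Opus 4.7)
The plan is to prove (1) by a Galois invariance argument and (2) by a term-by-term estimate stabilised by convergence of the Laurent series.

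For (1): Let $\sigma \in \mathrm{Gal}(\aclQp/\Qp)$. Since $x \in \Qp$, $\sigma$ permutes the $e$-th roots of $x$, so $\sigma(\sqrt[e]{x}) = \zeta_\sigma(x) \sqrt[e]{x}$ for some $e$-th root of unity $\zeta_\sigma(x) \in \aclQp$. The uniform choice of $e$-th roots puts the ratio $\sqrt[e]{x}/\sqrt[e]{x'}$ into $p^{\Gamma}\cdot(1+p^{v(e)+1}\Zp) \subset \Qp$ for every $x, x' \in G$, hence this ratio is Galois-fixed; applying $\sigma$ to it yields $\zeta_\sigma(x) = \zeta_\sigma(x')$, so $\zeta_\sigma$ is independent of $x$. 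From $f(x) \in \Qp$ we get $\sigma(f(x)) = f(x)$, that is,
\[
\sum_{i \in \bbZ} \bigl(\sigma(a_i)\zeta_\sigma^i - a_i\bigr)\sqrt[e]{x}^i = 0 \qquad \text{for all } x \in G.
\]
Any single component of $G$ sits inside an open ball of $\aclQp$ on which the series is analytic; since the component has $p$-adic accumulation points, the identity principle forces each coefficient to vanish, so $\sigma(a_i)\zeta_\sigma^i = a_i$ for every $i$. Therefore $\sigma(a_i\sqrt[e]{x}^i) = a_i\sqrt[e]{x}^i$ for every $\sigma$, placing $a_i\sqrt[e]{x}^i$ in $\Qp$.

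For (2): Let $\iota$ be the least index with $a_\iota \ne 0$ (which exists in the applications via Puiseux series; the general case is analogous). The term $a_\iota \sqrt[e]{x}^\iota$ has the smallest slope $\iota/e$ in $v(x)$, so for $v(x) \ge \lambda_0$ large it uniquely achieves the minimum valuation among the terms, and $v(f(x)) = v(a_\iota) + \iota v(x)/e$. The hypothesis $v(f(x)) \ge v(x)$ forces $\iota \ge e$, and by ultrametricity we also obtain $v(a_i\sqrt[e]{x}^i) \ge v(x)$ for every $i$. Now fix $x_1, x_2$ in the same component $G_\kappa$ and set $z_j := \sqrt[e]{x_j}$, $w := z_2/z_1 - 1$. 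Since $G$ is fine enough for $e$-th roots, the expansion $(1+w)^e - 1 = ew + \binom{e}{2} w^2 + \cdots$ is dominated by $ew$, giving $v(w) = v(x_2 - x_1) - v(x) - v(e)$; likewise $v(z_2^i - z_1^i) = iv(x)/e + v(i) + v(w)$, so
\[
v\bigl(a_i(z_2^i - z_1^i)\bigr) \;\ge\; v(a_i\sqrt[e]{x}^i) + v(i) - v(e) + v(x_2 - x_1) - v(x).
\]
When $v(i) \ge v(e)$, combining with $v(a_i\sqrt[e]{x}^i) \ge v(x)$ gives at once the desired bound $\ge v(x_2 - x_1)$. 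For the remaining indices $i$ with $v(i) < v(e)$, I need the strictly stronger bound $v(a_i\sqrt[e]{x}^i) \ge v(x) + v(e) - v(i)$. For $i > \iota$ this holds once $v(x) \ge \lambda_i := \frac{e(v(e) - v(i) - v(a_i))}{i - e}$, and the asymptotic $v(a_i) + iv(x_0)/e \to \infty$ at a reference $x_0 \in G$ (forced by convergence of the series) gives $\limsup_i \lambda_i \le v(x_0)$, so only a finite maximum of thresholds enters. The exceptional case $i = \iota$ with $v(\iota) < v(e)$ is handled directly, contributing a single finite threshold. Choosing $\lambda'$ larger than all these thresholds and summing via the ultrametric inequality yields $v(f(x_2) - f(x_1)) \ge v(x_2 - x_1)$.

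The main obstacle is the uniform threshold step in (2). The per-term estimate is routine once $\iota \ge e$ is established, but a quantitative use of the convergence of the series on $G$ is essential to show that the infinitely many indices $i$ admit a common finite $\lambda'$, rather than each contributing its own (potentially unbounded) bound.
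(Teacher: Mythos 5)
Your proof is correct, but you take a genuinely different route for part (1), and a cosmetically different route for part (2).

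For (1), the paper avoids Galois theory entirely. It argues by contradiction: assuming $\iota$ is minimal with $a_\iota\sqrt[e]{x}^\iota\notin\Qp$, it introduces the ``distance to $\Qp$'' function $\dQp(y):=\sup\{v(y-y')\mid y'\in\Qp\}$, notes this is finite because $\Qp$ is closed in $\aclQp$, observes that $\dQp(a_\iota\sqrt[e]{x}^\iota)-v(a_\iota\sqrt[e]{x}^\iota)$ is independent of $x\in G$ (ratios lie in $\Qp$), and then lets $x\to 0$ so the tail $\sum_{i>\iota}a_i\sqrt[e]{x}^i$ has valuation exceeding $\dQp(a_\iota\sqrt[e]{x}^\iota)$, contradicting $f(x)\in\Qp$. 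Your Galois argument is clean and conceptually transparent, but the step where you quote ``the identity principle'' is quietly nontrivial: a Laurent series (with unboundedly many negative powers) convergent on the circle $v(z)=\kappa/e$ need not extend analytically to any larger annulus, so the identity principle you need is the one for rigid-analytic functions on a circle, not the familiar one for power series on a disc. It does hold (the zero locus of a nonzero convergent Laurent series on a circle is a proper analytic subset, hence cannot contain the image of a component of $G$, which is a full ball of $z$-values), but you should either say this or replace the appeal by a Newton-polygon argument: if $b_i:=\sigma(a_i)\zeta_\sigma^i-a_i$ were nonzero for some minimal index, that term alone would dominate the sum for $v(x)$ large, contradicting $\sum b_i\sqrt[e]{x}^i\equiv 0$. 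This avoids the identity principle and is closer in spirit to the paper.

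For (2) you and the paper use the same core idea: $\iota\ge e$ (with $v(a_e)\ge 0$ if $\iota=e$), a Lipschitz estimate on $\sqrt[e]{x_2}^i-\sqrt[e]{x_1}^i$, and a Newton-polygon bound (anchored at a single convergence point $x_0$) to produce a threshold $\lambda'$ uniform in $i$. The difference is organizational: you extract the sharper estimate $v(z_2^i-z_1^i)\ge\tfrac{i}{e}v(x)+v(i)+v(w)$ (using $(k-1)v(w)\ge v(k!)\ge v(i)-v(\binom{i}{k})$, which is where ``fine enough for $e$-th roots'' and $v(w)\ge v(e)+1$ enter), so the indices with $v(i)\ge v(e)$ are absorbed at once by $v(a_i\sqrt[e]{x}^i)\ge v(x)$, while the paper only uses the cruder gain $\sigma-v(e)$ from Lemma~\ref{lem:chooseRoot}, treats $i=e$ by the identity $\sqrt[e]{x_2}^e-\sqrt[e]{x_1}^e=x_2-x_1$, and runs the Newton-polygon bound for every $i>e$. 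Both are correct; your extra $v(i)$ saves a case but requires establishing that $v(a_i\sqrt[e]{x}^i)\ge v(x)$ holds uniformly, which itself rests on the same Newton-polygon considerations, so there is no net simplification. One small nit: you write the estimates for $v(w)$ and $v(z_2^i-z_1^i)$ with equality signs, when in general only the inequalities $\ge$ are guaranteed and needed (the equality for $v(w)$ does hold under ``fine enough'', but the one for $v(z_2^i-z_1^i)$ may fail for some $i$); since the inequalities go the right way, the argument survives, but the notation is misleading.
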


\begin{proof}
(1)
As $\frac{\sqrt[e]{x'}}{\sqrt[e]{x}} \in \Qp$ for any $x,x' \in G$,
it suffices to check the claim for one single $x \in G$.
Now suppose that $\iota$ is minimal
such that $a_\iota\sqrt[e]{x}^\iota \notin \Qp$.
For $y \in \aclQp$, write $\dQp(y) := \sup\{v(y-y')\mid y'\in \Qp\}$
for the distance of $y$ to $\Qp$.
As $\Qp$ is closed in $\aclQp$ in the $p$-adic topology, we have
$\dQp(a_\iota\sqrt[e]{x}^\iota) > 0$.

As $\frac{a_{\iota}\sqrt[e]{x'}^\iota}{a_{\iota}\sqrt[e]{x}^\iota} \in \Qp$
for any  $x, x' \in G$, we have
$\dQp(a_\iota\sqrt[e]{x}^\iota) = v(a_\iota\sqrt[e]{x}^\iota) + d_0$
for some fixed $d_0 \in \Gamma$ not depending on $x \in G$.
Thus, for $x$ sufficiently close to zero, we get
$v(a_i\sqrt[e]{x}^i) > \dQp(a_\iota\sqrt[e]{x}^\iota)$ for all $i > \iota$.
Together with $\sum_{i<\iota}a_i\sqrt[e]{x}^i \in \Qp$, this contradicts
$\sum_{i\in \bbZ}a_i\sqrt[e]{x}^i \in \Qp$.

\medskip

(2)
Suppose that $a_\iota$ is the first non-zero coefficient of the series.
The condition $v(f(x)) \ge v(x)$ (applied to sufficiently small $x$)
implies that $\iota \ge e$, and if $\iota = e$, then
$v(a_\iota) \ge 0$.

Now suppose $x_1, x_2 \in G$ are given. The claim $v(f(x_2) - f(x_1)) \ge v(x_2 - x_1)$
follows if we can verify the inequality
\begin{equation}\label{eq:summandFlat}
v(a_i\sqrt[e]{x_2}^i - a_i\sqrt[e]{x_1}^i) = 
v(a_i) + v(\sqrt[e]{x_2}^i - \sqrt[e]{x_1}^i) \ge v(x_2 - x_1)
\end{equation}
for all $i \ge \iota$.

If $i = e = \iota$, then $\sqrt[e]{x_2}^i - \sqrt[e]{x_1}^i = x_1 - x_2$, so \req{summandFlat}
follows from $v(a_i) \ge 0$. Now suppose $i > e$.

Set $\sigma := v(x_2 - x_1) - v(x_1)$. By Lemma~\ref{lem:chooseRoot},
we get $\sqrt[e]{x_1}^i\etwa[\sigma-v(e)]\sqrt[e]{x_2}^i$.
So
\[
v(\sqrt[e]{x_2}^i - \sqrt[e]{x_1}^i)
\ge v(\sqrt[e]{x_1}^i) + \sigma - v(e) = \tfrac{i}{e}v({x_1}) + v(x_2 - x_1) - v(x_1) - v(e)
,
\]
and it remains to verify $v(a_i) + \frac{i}{e}v({x_1}) - v(x_1) - v(e) \ge 0$.
This is true for $v(x_1) \gg 0$, but we need a bound which is independent
of $i$.

Choose any $x_0 \in G$ and set $\lambda_0 := v(x_0)$.
Let $i_0 \in \bbZ$ be such that
$v(a_{i_0}) + \frac{{i_0}}{e}\lambda_0$ is minimal (a minimum exits by convergence
of $f(x_0)$).
By supposing $v(x_1) \ge \lambda_0$, we get
\[
\begin{aligned}
&v(a_i) + \tfrac{i}{e}v({x_1}) - v(x_1) - v(e)\\
=\,\,& v(a_i) + \tfrac{i}{e}\lambda_0 \,\,+\,\, \tfrac{i}{e}\left(v({x_1}) - \lambda_0\right) \,\,- v(x_1) - v(e)
\\
\ge\,\,& v(a_{i_0}) + \tfrac{{i_0}}{e}\lambda_0 \,\,+\,\, \tfrac{e+1}{e}\left(v({x_1}) - \lambda_0\right) \,\,- v(x_1) - v(e)
\\
=\,\,& v(a_{i_0}) + \tfrac{{i_0}}{e}\lambda_0 - \tfrac{e+1}{e}\lambda_0 - v(e) + \tfrac{1}{e}v(x_1).
\\
\end{aligned}
\]
Now everything is constant except for the last summand,
so for $v(x_1)$ sufficiently large, this is non-negative.
\end{proof}

\subsection{Parametrized subsets of \texorpdfstring{$\Qp$}{\041\032p}}
\label{subsect:1dimParam}

For (one-dimensional) subsets of $\Qp$, the main conjecture is not
difficult to prove:

\begin{lem}\label{lem:1dim}
If $X$ is a definable subset of $\Qp$,
then $\Tr(X)$ is of level $1$.
\end{lem}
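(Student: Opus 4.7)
The plan is to apply Lemma~\ref{lem:compact} with $d=0$. Since $\Tr(X) = \Tr_{\Zp}(X)$ depends only on $X \cap \Zp$, I would first replace $X$ by $X \cap \Zp$ and hence assume $X \subset \Zp$. It then suffices to check, for each $x_0 \in \Zp$, that there exist parameters $\lambda, \mu, \rho$ such that for every garland $G$ corresponding to $x_0, \lambda, \mu, \rho$ one has
\[
\Tr_G(X)(\kappa) \;\cong\; \Tr(\Zp) \times \tree_G(\kappa)
\]
with $\tree_G$ piecewise of level $0$.

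Fix $x_0 \in \Zp$. The first step is to invoke Lemma~\ref{lem:1dimLokal} applied to $X$ and $x_0$ to obtain $\lambda, \mu, \rho$ such that any garland $G$ converging to $x_0$ with these parameters lies either entirely inside $X$ or entirely outside $X$. Fix such a garland $G$, with components $G_\kappa$ for $\kappa \in M(G)$.

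In the first case, $G \cap X = \emptyset$, so $\Tr_{G_\kappa}(X)$ is empty for every $\kappa$, and I take $\tree_G$ to be the constantly-empty parametrized tree, which is a level $0$ tree via the tree datum with empty skeleton $\skel$. In the second case, $G \subset X$, so for each $\kappa$ we have $G_\kappa \cap X = G_\kappa$; since $G_\kappa$ is a ball in $\Qp$, this gives $\Tr_{G_\kappa}(X) = \Tr_{G_\kappa}(G_\kappa) \cong \Tr(\Zp)$. I then take $\tree_G(\kappa)$ to be the one-node tree, which is a level $0$ tree via the datum with $\skel$ a single node and no bones, and we have $\Tr(\Zp) \cong \Tr(\Zp) \times \tree_G(\kappa)$.

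In both cases $\tree_G$ is a genuine (hence piecewise) level $0$ parametrized tree, so the hypothesis of Lemma~\ref{lem:compact} is satisfied with $d=0$, and the lemma yields that $\Tr(X)$ is of level $1$. There is essentially no obstacle: the statement is a clean combination of the localization provided by Lemma~\ref{lem:1dimLokal} with the assembly mechanism of Lemma~\ref{lem:compact}, the only subtlety being to note that empty skeletons and one-node skeletons both produce valid level $0$ tree data.
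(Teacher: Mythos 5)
Your proof is correct and follows essentially the same route as the paper: both apply Lemma~\ref{lem:1dimLokal} to reduce to garlands that are entirely inside or outside $X$, so that $\Tr_G(X)$ is constantly $\emptyset$ or constantly $\Tr(\Zp)$, and both then feed this into Lemma~\ref{lem:compact} with $d=0$. You simply spell out the two cases and the associated level $0$ tree data more explicitly than the paper does.
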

\begin{proof}
By Lemma~\ref{lem:1dimLokal}, trees on sufficiently fine garlands
close to any given point are isomorphic either
to $\kappa \mapsto \emptyset$ or to $\kappa \mapsto \Tr(\Zp)$,
so in any case they are of the form $\kappa \mapsto \Tr(\Zp) \times \tree(\kappa)$
where $\tree$ is of level $0$.
Thus Lemma~\ref{lem:compact} yields that the total tree $\Tr(X)$
is of level $1$.
\end{proof}

To prove the conjecture for definable subsets of $\Qp^2$, we will need a
parametrized version of this: if we have
definable sets $X_{\kappa} \subset \Qp$ parametrized by $\kappa \in \Gamma$
in a suitable ``uniform'' way, then we should get a parametrized level
$1$ tree. To state this, we need a notion of
``sufficient uniform maps'' from $\Gamma$ to $\Qp$.

\begin{defn}\label{defn:deltaUniform}
Let $\delta \in \Gamma_{>0}$, $M \subset \Gamma_{\ge0}$
and $c_\kappa \in \Qp$ for $\kappa \in M$.
We say that $\kappa \mapsto c_\kappa$ is \emph{$\delta$-uniform},
if $\kappa \mapsto v(c_\kappa)$ is
linear and if there exists an $a \in \Zp\mult$ such that
$c_\kappa \etwa p^{v(c_\kappa)}a$ for all $\kappa \in M$.
\end{defn}

Now here is a uniform version of Lemma~\ref{lem:1dim}.

\begin{prop}\label{prop:1dimParam}
Suppose that for each $\kappa$ in a subset $M \subset \Gamma_{\ge0}$ we
are given a definable set $X_\kappa \subset \Qp$, and that these sets are
uniform in $\kappa$ in the following sense. Each $X_\kappa$ is the union
of finitely many disjoint cells $C_{\kappa,i}$, $i \in I$ of the form
\[
C_{\kappa,i} = \{x \in \Qp \mid \alpha_{\kappa,i} \conda_{i} v(x - c_{\kappa,i}) \condb_{i} \beta_{\kappa,i}
\text{ and }\exists z\; x - c_{\kappa,i} = r_{i}z^{e}\}
.
\]
We require that all exponents are equal and
that none of the index set $I$, the exponent $e$, the residues $r_{i}$
and conditions $\conda_i$, $\condb_i$ depend on $\kappa$.
Moreover set $\delta := \delta(e)$ as in Lemma~\ref{lem:cellEasy}.
We require that for each $i,j \in I$, the functions $\kappa \mapsto \alpha_{\kappa,i}$ and
$\kappa \mapsto \beta_{\kappa,i}$ are linear,
and the functions $\kappa \mapsto c_{\kappa,i}$ and $\kappa \mapsto
c_{\kappa,i}-c_{\kappa,j}$ are $\delta$-uniform.

Under these conditions on $X_\kappa$, the tree
$M \to \Trees, \kappa \mapsto \Tr(X_\kappa)$ is piecewise
a parametrized level $1$ tree.
\end{prop}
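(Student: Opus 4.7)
The plan is to construct explicitly, uniformly in $\kappa$, a (generalized piecewise) level-$1$ tree datum whose realization is $\Tr(X_\kappa)$. I would begin with preliminary reductions: by Lemma~\ref{lem:genTreeParam} I may work with generalized piecewise level-$1$ trees, so I am free to partition $M$ into finitely many definable subsets via boolean combinations of linear inequalities and treat each piece separately. On each such piece I may assume fixed orderings and signs for all the relevant linear functions $v(c_{\kappa,i})$, $v(c_{\kappa,i}-c_{\kappa,j})$, $\alpha_{\kappa,i}$, $\beta_{\kappa,i}$ (for all $i,j \in I$). Cells with $v(c_{\kappa,i}) < 0$ have $v(x - c_{\kappa,i})$ constantly equal to $v(c_{\kappa,i})$ on $\Zp$, so they contribute either nothing or a union of finitely many power-residue cosets sitting at a fixed depth; the corresponding tree is trivially level $1$. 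I may therefore restrict to the case $c_{\kappa,i} \in \Zp$ for all $i$.

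Next I would describe the tree of a single such cell $C_{\kappa,i}$. Its $p$-adic closure inside $\Zp$ sits inside the ball $\ball{c_{\kappa,i}, \alpha_{\kappa,i}}$ and is carved out by the $e$-th power residue condition on $x-c_{\kappa,i}$, possibly trimmed at depth $\beta_{\kappa,i}$. For $v(x-c_{\kappa,i}) \geq \alpha_{\kappa,i} + \delta(e)$, Lemma~\ref{lem:root}(2) makes the power residue constant on each such ball, so the combinatorial structure of $C_{\kappa,i}$ below the opening depth is a fixed finite "fan" of depth $\leq \delta(e)$, depending only on $e$ and $r_i$; at each leaf of the fan a copy of $\Tr(\Zp)$ is attached. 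Hence $\Tr(C_{\kappa,i})$ is realized by a level-$1$ datum whose skeleton is a single path from the root of length $\alpha_{\kappa,i}$ (linear in $\kappa$), with this fixed finite side branch at its end. An upper bound $\beta_{\kappa,i} < \infty$ merely terminates the bones of the fan at depth $\beta_{\kappa,i}$, introducing an additional linear-length joint.

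The combination step is the crux. Two distinct cells $C_{\kappa,i}, C_{\kappa,j}$ have centers whose deepest common ancestor in $\Tr(\Zp)$ sits at depth $\nu_{ij}(\kappa) := v(c_{\kappa,i}-c_{\kappa,j})$, a linear function of $\kappa$ by $\delta$-uniformity. More generally, after further piecewise partitioning, the finite set $\{c_{\kappa,i} : i \in I\} \subset \Zp$ traces out a finite subtree of $\Tr(\Zp)$ whose combinatorial shape is independent of $\kappa$: the $\delta$-uniformity of the centers and their differences pins down all relevant leading angular components, so only the bifurcation depths depend on $\kappa$, and they do so linearly. This finite subtree with bones of linear length becomes the skeleton of the assembled tree; at each leaf, corresponding to a specific cell $C_{\kappa,i}$, I attach the single-cell side structure built above.

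The main obstacle I expect is the bookkeeping around the interaction between the bifurcation depths $\nu_{ij}$ and the cell opening depths $\alpha_{\kappa,i}$, which may take different orderings on different parts of $M$. Lemma~\ref{lem:cellEasy} is the essential tool here: when $\nu_{ij}(\kappa)$ exceeds both relevant $\alpha$'s by more than $\delta(e)$, the two cells occupy disjoint descendants of $\Tr(\Zp)$ from depth $\nu_{ij}$ onward, so glueing is clean; in the opposite range, the cells "fuse" at shallower depth, but this only means a single larger finite fan appears in the combined side-branch datum, whose size is still bounded by a function of $e$ and $|I|$. Since each ordering regime corresponds to a boolean combination of linear inequalities, it cuts out a definable piece of $M$ on which the tree datum is uniform. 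Assembling the data on each piece yields a generalized piecewise level-$1$ tree realizing $\Tr(X_\kappa)$; by Lemma~\ref{lem:genTreeParam} this is a (normal) piecewise parametrized level-$1$ tree, as required.
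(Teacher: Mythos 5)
The central claim of your combination step is that $\Tr(C_{\kappa,i})$ is ``realized by a level-$1$ datum whose skeleton is a single path from the root of length $\alpha_{\kappa,i}$, with this fixed finite side branch at its end.'' This is not correct when $\beta_{\kappa,i}=\infty$. Take $C=\{x\in\Zp\mid \exists z\, x=z^2\}$, $p=5$ (so $\alpha=0$, $e=2$, center $0$). The node $\ball{0,\lambda}$ lies in $\Tr(C)$ for \emph{every} $\lambda$ (since $p^{2k}\in C$), so the skeleton must be an infinite path, and it carries side branches at every even depth. This infinite tree is not of the form ``finite fan with copies of $\Tr(\Zp)$ at the leaves''; in particular it cannot be written as $\Tr(\Zp)\times\tree$ for any $\tree$, since it has nodes with a single child. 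The correct picture, which the paper uses, is that the side branch at $\ball{c,\lambda}$ depends on $\lambda$ modulo $e$ as $\lambda$ runs along an infinite bone --- and this is the very reason the definition of level-$d$ tree data includes the congruence class $\ccGamma\in\Gamma/\rho\Gamma$.

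More seriously, your stated ``crux'' --- combining the cells into a single uniform datum --- is asserted rather than proved. You write that when the bifurcation depth $\nu_{ij}(\kappa)$ does not dominate the relevant $\alpha$'s ``this only means a single larger finite fan appears in the combined side-branch datum'', but nothing justifies that the resulting tree on each definable piece of $M$ is isomorphic as $\kappa$ varies. The paper's proof earns this uniformity at real cost: in the ``thick and simple'' regime it moves all live centers to $0$ so that the side branch at $\ball{0,\lambda}$ becomes a union of power-residue cosets depending only on $\lambda\bmod e$, and in the general regime it cuts out holes $B_{\kappa,j}$ around the centers, handles the holes by an induction on the number of involved functions $\gg 0$ via Corollary~\ref{cor:1dimParam}, and then explicitly constructs bijective isometries $\phi\colon Y_\kappa\to Y_{\kappa'}$ (with $Y_\kappa=(X_\kappa\cap S_\kappa)\cup\{c_{\kappa,j}\}$) by piecewise translations $x\mapsto x-c_{\kappa,i}+c_{\kappa',i}$, verifying the isometry property case-by-case using Lemma~\ref{lem:cellEasy}, $\delta$-uniformity of center differences, and \req{ifuSort}. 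None of this machinery appears in your sketch; simply partitioning $M$ into pieces on which the orderings of the linear functions are constant does not by itself yield the required isomorphisms of trees. You have correctly identified that the finite subtree spanned by the centers should carry the bifurcation structure, but the hard content of the proposition lies precisely in the two things you pass over: periodic side branches along infinite bones, and the isometry argument establishing $\kappa$-uniformity on the cheese.
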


Note that the requirement that the exponents of all cells are equal
is not a real restriction: anyway cell decompositions can be refined such that
all exponents become equal.

Before we start with the proof, let us state a variant
as a corollary.

\begin{cor}\label{cor:1dimParam}
Suppose that $M \subset \Gamma_{\ge0}$ and $X_\kappa$ (for $\kappa \in M$)
are given as in Proposition~\ref{prop:1dimParam} and satisfy all the conditions
required there with exception of the uniformity condition on the
cell centers $c_{\kappa,i}$. (We do however still require the uniformity of
differences $c_{\kappa,i} - c_{\kappa,j}$.)
Suppose moreover that
$B_\kappa = \ball{b_\kappa, \sigma_\kappa}$ are balls, where
the function of radii $\kappa \mapsto \sigma_\kappa$ is linear
and such that for any $i \in I$,
the function $\kappa \mapsto c_{\kappa,i} - b_\kappa$ is $\delta$-uniform
(with $\delta$ as in the proposition). Then the tree
$M \to \Trees, \kappa \mapsto \Tr_{B_\kappa}(X_\kappa)$ is piecewise
a parametrized level $1$ tree.
\end{cor}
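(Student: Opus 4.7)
The plan is to reduce Corollary~\ref{cor:1dimParam} to Proposition~\ref{prop:1dimParam} via a $\kappa$-dependent isometry that sends each ball $B_\kappa$ to $\Zp$, combined with a finite partition of $M$ that removes the $\kappa$-dependence of certain cell residues. Concretely, I would consider the isometry $\psi_\kappa\colon\Qp\to\Qp$, $x \mapsto p^{-\sigma_\kappa}(x-b_\kappa)$, which maps $B_\kappa$ onto $\Zp$. Since $v(\psi_\kappa(x_1)-\psi_\kappa(x_2)) = v(x_1-x_2)-\sigma_\kappa$, Lemma~\ref{lem:treeIso} gives $\Tr_{B_\kappa}(X_\kappa) \cong \Tr(\tilde X_\kappa)$, where $\tilde X_\kappa := \psi_\kappa(X_\kappa)$ is the disjoint union of the cells $\tilde C_{\kappa,i} := \psi_\kappa(C_{\kappa,i})$. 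A direct computation describes these cells by the new data
\[
\tilde c_{\kappa,i} = p^{-\sigma_\kappa}(c_{\kappa,i}-b_\kappa),\quad
\tilde\alpha_{\kappa,i} = \alpha_{\kappa,i}-\sigma_\kappa,\quad
\tilde\beta_{\kappa,i} = \beta_{\kappa,i}-\sigma_\kappa,
\]
keeping the same index set $I$, exponent $e$, and conditions $\conda_i, \condb_i$, but with residues $p^{-\sigma_\kappa}r_i$ that a priori depend on $\kappa$.

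The main obstacle is this $\kappa$-dependence of the residues, which I would resolve by partitioning $M$ according to $\sigma_\kappa \bmod e$. Since $\sigma_\kappa$ is linear (hence definable), the sets $M_s := \{\kappa \in M : \sigma_\kappa \equiv s \pmod{e}\}$ for $s \in \{0,\dots,e-1\}$ form a finite definable partition. On $M_s$, writing $\sigma_\kappa = eq_\kappa + s$ and substituting $z' = p^{-q_\kappa}z$ in the cell condition $\exists z,\ \tilde x - \tilde c_{\kappa,i} = p^{-\sigma_\kappa}r_i z^e$ shows that it is equivalent to $\exists z',\ \tilde x - \tilde c_{\kappa,i} = (p^{-s}r_i)(z')^e$, so the residue may be taken to be the constant $p^{-s}r_i$ on $M_s$.

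It remains to verify the remaining hypotheses of Proposition~\ref{prop:1dimParam} on each $M_s$. Linearity of $\tilde\alpha_{\kappa,i}$ and $\tilde\beta_{\kappa,i}$ is immediate from linearity of the original data and of $\sigma_\kappa$. For $\delta$-uniformity of the transformed centers, the hypothesis gives some $a_i \in \Zp^\times$ with $c_{\kappa,i}-b_\kappa \etwa[\delta] p^{v(c_{\kappa,i}-b_\kappa)}a_i$; multiplying both sides by $p^{-\sigma_\kappa}$ preserves $\etwa[\delta]$ and yields $\tilde c_{\kappa,i} \etwa[\delta] p^{v(\tilde c_{\kappa,i})}a_i$, so $\tilde c_{\kappa,i}$ is $\delta$-uniform, and the same argument applied to the hypothesis on $c_{\kappa,i}-c_{\kappa,j}$ shows that $\tilde c_{\kappa,i}-\tilde c_{\kappa,j}$ is also $\delta$-uniform. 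Applying Proposition~\ref{prop:1dimParam} to $\tilde X_\kappa$ on each $M_s$ then yields that $\kappa \mapsto \Tr_{B_\kappa}(X_\kappa) \cong \Tr(\tilde X_\kappa)$ is piecewise a parametrized level $1$ tree, as required.
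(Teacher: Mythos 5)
Your proposal is correct and takes essentially the same route as the paper: apply the isometry $\psi_\kappa(x) = p^{-\sigma_\kappa}(x-b_\kappa)$, observe via Lemma~\ref{lem:treeIso} that $\Tr_{B_\kappa}(X_\kappa) \cong \Tr(\psi_\kappa(X_\kappa))$, check that linearity of bounds and $\delta$-uniformity of centers and their differences carry over, and then handle the only obstacle—the $\kappa$-dependent residues $p^{-\sigma_\kappa}r_i$—by partitioning $M$ according to $\sigma_\kappa \bmod e$. The paper states this residue step more tersely (``the $e$-th power residue of $p^{-\sigma_\kappa}r_i$ is constant on each part''), whereas you make it explicit via the substitution $z' = p^{-q_\kappa}z$; that is only a presentational difference, not a different argument.
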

\begin{proof}[Proof of the corollary]
Define $\psi_\kappa(x) := p^{-\sigma_\kappa}(x - b_k)$.
Then $\Tr_{B_\kappa}(X_\kappa) \cong \Tr(\psi_\kappa(X_\kappa))$, so
it suffices to verify uniformity of the sets $\psi_\kappa(X_\kappa)$.
Uniformity of the cell bounds and
$\delta$-uniformity of differences of centers carries over
(by linearity of $\kappa \mapsto \sigma_\kappa$), and
$\delta$-uniformity of $\kappa \mapsto c_{\kappa,i} - b_\kappa$
yields $\delta$-uniformity of $\kappa \mapsto \psi_\kappa(c_{\kappa,i})$.
The exponent $e$ and the conditions $\conda_i$, $\condb_i$ do not change,
so it remains to consider the residues $r_i$. They are replaced by
$p^{-\sigma_\kappa}r_i$, which does depend on $\kappa$.
However, as we only want to prove piecewise uniformity of the resulting trees,
we may partition $M$ according to $\sigma_\kappa$ modulo $e$; on these
parts, the $e$-th power residue of $p^{-\sigma_\kappa}r_i$ is constant,
so we may replace $p^{-\sigma_\kappa}r_i$ by one fixed value.
\end{proof}

\begin{proof}[Proof of Proposition~\ref{prop:1dimParam}]
We may suppose that $M$ is infinite; otherwise the statement follows from
Lemma~\ref{lem:1dim}.

We will prove the statement inductively, starting from the leaves. We will cut the tree
horizontally into slices. There will be some thin ones where ``the things happen''
and some thick and simple parts in between where the skeleton of the tree
will only consist of long bones.
Let us make this precise.

By ``the involved linear functions'' we mean the set of maps from
$M$ to $\Gamma \cup \{\infty\}$ consisting of
% the constant maps $\kappa \mapsto 0$
%and $\kappa \mapsto \infty$, and the maps
$\kappa \mapsto \alpha_{\kappa,i}$, $\kappa \mapsto \beta_{\kappa,i}$,
$\kappa \mapsto v(c_{\kappa,i})$ and $\kappa \mapsto v(c_{\kappa,i} -
c_{\kappa,j})$ for $i, j\in I$.

For two linear functions $\ifu_1, \ifu_2 \colon M \to \Gamma \cup \{\infty\}$, we write
\[
\ifu_1 \ll \ifu_2 \defiff \lim_{\kappa \to \infty}
\ifu_2(\kappa) - \ifu_1(\kappa) = \infty
.
\]
(If $\ifu_1$ and $\ifu_2$ both are constant $\infty$, we set $\ifu_1 \nll \ifu_2$.)
By treating finitely many elements of $M$ separately
using Lemma~\ref{lem:1dim}, we may suppose that if 
$\ifu_1$ and $\ifu_2$ both are either involved or constant $0$, then
\begin{equation}\label{eq:ifuSort}
\ifu_1 \ll \ifu_2 \Longrightarrow
\ifu_2(\kappa) - \ifu_1(\kappa) \ge \max\{2\delta,e + 1\}
\text{ for all }\kappa \in M
.
\end{equation}
In particular, $\le$ defines a total order on the involved functions and the
zero function, and whether a cell center 
$c_{\kappa,i}$ lies in $\Zp$ is independent of $\kappa$.

By partitioning $M$ into finitely many definable sets and treating each one
separately,
we may suppose that moreover for any $i \in I$, whether or not $C_{\kappa,i} \cap \Zp$
is empty is independent of $\kappa$.
By removing cells not intersecting $\Zp$, we may suppose
$C_{\kappa,i} \cap \Zp \ne \emptyset$
for any $i \in I$ and any $\kappa \in M$.

\medskip

Our induction will run over the number of involved functions $\ifu$ satisfying
$\ifu \gg 0$. Thus by induction hypothesis,
we can apply Corollary~\ref{cor:1dimParam} to $(X_\kappa)_\kappa$ and a
family of
balls $B_\kappa = \ball{b_\kappa,\sigma_\kappa}$, provided there is at least
one involved function $\ell \gg 0$ such that $\ell - (\kappa\mapsto\sigma_\kappa) \ngg 0$.

We will now first treat the special case where
every lower bound $\alpha_{\kappa,i}$ satisfies either $\alpha_{\kappa,i} \le 0$ or
$\alpha_{\kappa,i} \gg 0$, and every other involved function
$\ifu$ satisfies $\ifu \gg 0$.
This corresponds to the thick but simple slices in our tree.
Afterwards we will reduce the general case to the first one; this reduction corresponds
to the thin but complicated slices.

\medskip

\textbf{The thick and simple parts:}\nopagebreak

Let $\ifu'_0$ be the minimal (with respect to $\le$) involved function satisfying $\ifu'_0\gg 0$,
and define $\ifu_0 := \ifu'_0 - \max\{\delta, e\}$.
By \req{ifuSort}, we have $\ifu_0(\kappa) > 0$ for all $\kappa \in M$.

We may suppose $I \ne \emptyset$. Choose an arbitrary $i_0 \in I$
and suppose without loss $c_{\kappa,i_0} = 0$ for all $\kappa \in M$.
Thus $v(c_{\kappa,i}) \ge \ifu_0(\kappa)+\delta$ for all $i \in I$.
Moreover, as $C_{\kappa,i} \cap \Zp$ is non-empty and
$\beta_{\kappa,i} \ge \ifu_0(\kappa)+e$, we get
$C_{\kappa,i} \cap \ball{0,\ifu_0(\kappa)} \ne \emptyset$;
so $\ball{0,\lambda} \in \Tr(X_\kappa)$ for all $\lambda \le \ifu_0(\kappa)$.

Now suppose first that $\ifu_0 < \infty$, and set $B_\kappa := \ball{0,
\ifu_0(\kappa)}$. The parametrized tree $\kappa \mapsto \Tr_{B_\kappa}(X_\kappa)$
is of level $1$ by induction hypothesis, as the involved function $\ifu'_0$
satisfies $\ifu'_0 \gg 0$ and $\ifu'_0 - \ifu_0 \ngg 0$.
By Lemma~\ref{lem:glueCheese}, it is therefore enough to verify
that the tree on the cheese $S_\kappa := \Zp \ohne B_\kappa$ is of level $1$ in such a way
that $\kappa \mapsto B_\kappa$ is a joint.
We choose $\{\ball{0,\lambda} \mid 0 \le \lambda \le \ifu_0(\kappa)\}$
as skeleton (with a single bone of length $\ifu_0$); it remains to
analyse the side branches.

If $\ifu_0 = \infty$, then we do not need the induction hypothesis; we
simply define $S_{\kappa} := \Zp$ and choose
$\{\ball{0,\lambda} \mid \lambda \ge 0\}$ as skeleton for $\Tr_{S_\kappa}(X_\kappa)$
(again with one single bone).

The tree $\Tr_{S_\kappa}(X_\kappa)$ does not change if we replace all centers
of cells $c_{\kappa,i}$ by $0$: if $\ifu_0 = \infty$, there is nothing to do;
otherwise this follows from Lemma~\ref{lem:cellEasy} (1),
using that for $x \notin B_\kappa$, we have $v(x - c_{\kappa,i}) < \ifu_0(\kappa)
\le v(c_{\kappa,i} - 0) - \delta$.
So for $x \in S_\kappa \ohne \{0\}$, we get that $x \in X_\kappa$ if and only if
there is an $i \in I$ with $\alpha_{\kappa,i} \le 0$
such that $\frac{x}{r_i}$ is an $e$-th power.
Thus for $\lambda < \ifu_0(\kappa)$,
the side branch of $\Tr(X_\kappa)$ at $\ball{0,\lambda}$ only depends on
$\lambda$ modulo $e$ and not on $\kappa$ at all. Moreover, each side
branch consists of a finite tree with copies of $\Tr(\Zp)$ attached to
its leaves; hence $\kappa \mapsto \Tr_{S_\kappa}(X_\kappa)$
is indeed of level $1$.

\medskip

\textbf{The thin and complicated slices:}\nopagebreak

(Reduction of the general case to the case
where all involved $\ifu$ satisfy $\ifu \gg 0$, except for lower bounds $\alpha_{\kappa,i}$
which may also be $\alpha_{\kappa,i} \le 0$.)

Let us first have a look at cells whose centers $c_{\kappa,i}$ lie
outside of $\Zp$. If $v(c_{\kappa,i}) < -\delta$ and
$C_{\kappa,i} \cap \Zp \ne \emptyset$, then
Lemma~\ref{lem:cellEasy} yields $\Zp \subset C_{\kappa,i}$, so this case is trivial. If
$-\delta \le v(c_{\kappa,i}) < 0$, then $v(c_{\kappa,i})$ does not depend
on $\kappa$ by \req{ifuSort}, and $\delta$-uniformity of $c_{\kappa,i}$ yields
$c_{\kappa,i} \etwa a'$ for some $a' \in \Qp$ not depending
on $\kappa$. Thus for any two different $\kappa, \kappa' \in M$,
we get $v(c_{\kappa,i} - c_{\kappa',i}) \ge 0$. Moreover,
$C_{\kappa,i} \cap \Zp \ne \emptyset$ implies
$\alpha_{\kappa,i} \le v(c_{\kappa,i}) = v(x-c_{\kappa,i}) \le \beta_{\kappa,i}$ for all
$\kappa$ and all $x \in \Zp$.
This yields bijections
\begin{equation}\label{eq:cAussen}
\Zp \cap C_{\kappa,i} \to \Zp \cap C_{\kappa',i}, x \mapsto x - c_{\kappa,i} + c_{\kappa',i}
\end{equation}
for all $\kappa, \kappa' \in M$, which will be useful later.

Now let $\lambda$ be the maximum
value of all constant involved functions.
We will cut out holes of radius $\lambda + \delta$ around the centers of
some of the cells, apply the  thick and simple case
to get the trees in these holes, compute the tree outside of the holes
and then put everything together. Define
$B_{\kappa,i} := \ball{c_{\kappa,i},\lambda + \delta}$
for $i \in I$. We do not want to cut out all $B_{\kappa,i}$,
but only those in which $X_\kappa$ is complicated:
define $J \subset I$ in such a way that $j \in J$ implies
$c_{\kappa,j} \in \Zp$ and
$C_{\kappa,j} \cap B_{\kappa,j} \ne \emptyset$. Moreover, if there are
several $i$ for which the balls $B_{\kappa,i}$ are equal, then put
only one representative into $J$.

Let us first analyse the relative position of a cell $C_{\kappa,i}$
and a hole $B_{\kappa,j}$ ($i \in I, j \in J$). We claim that
either $c_{\kappa,i} \in B_{\kappa,j}$ or $C_{\kappa,i} \cap B_{\kappa,j} =
\emptyset$, and that this does not depend on $\kappa$. Indeed, if
$v(c_{\kappa,i} - c_{\kappa,j}) \gg 0$, then by \req{ifuSort} we have
$v(c_{\kappa,i} - c_{\kappa,j}) \ge \lambda + 2\delta$ for all $\kappa\in M$,
so
$c_{\kappa,i} \in B_{\kappa,j}$. If on the other hand $v(c_{\kappa,i} - c_{\kappa,j}) \ngg 0$, then
$v(c_{\kappa,i} - c_{\kappa,j}) \le \lambda$ for all $\kappa\in M$,
and Lemma~\ref{lem:cellEasy} (1) implies that
$B_{\kappa,j}$ lies either completely inside or completely
outside of $C_{\kappa,i}$. As
$B_{\kappa,j} \cap C_{\kappa,j} \ne \emptyset$, the disjointness of
$C_{\kappa,i}$ and $C_{\kappa,j}$ implies $C_{\kappa,i} \cap
B_{\kappa,j} = \emptyset$.

Now fix $j \in J$. Computing the tree $\kappa
\mapsto \Tr_{B_{\kappa,j}}(X_\kappa)$ in the hole $B_{\kappa,j}$
can be done using the corollary version of the thick-and-simple case,
after removing all cells not intersecting $B_{\kappa,j}$.
Indeed, the required uniformity in $\kappa$ is clear, and the condition
$\ifu \gg \lambda + \delta$ for involved $\ell$
(or $\alpha_{\kappa,i} \le \lambda + \delta$ for lower bounds) follows
from the fact that $C_{\kappa,i} \cap B_{\kappa,j} \ne \emptyset$
implies $v(c_{\kappa,i} - c_{\kappa,j}) \gg 0$ and $\beta_{\kappa,i} \gg 0$.
% The center of the balls $B_{\kappa,j}$ are $c_{\kappa,j}$ and the radii are $\lambda + \delta$,
% so we get all the uniformity conditions in $\kappa$ required by the corollary.
% It remains to check that for any involved function $\ell > \delta + \lambda$
% we have $\ell \gg \delta + \lambda$, and that any center $c_{\kappa,i}$
% lies in $B_{\kappa,j}$. The first statement is clear by our choice of $\lambda$.
% The second one follows from the previous paragraph, as we only consider
% cells intersecting $B_{\kappa,j}$.

By Lemma~\ref{lem:glueCheese} we are left to compute the
tree on the cheese $S_\kappa := \Zp \ohne \bigcup_{j \in J} B_{\kappa,j}$.
We will first check that for each $\kappa$ separately, the tree
$\Tr_{S_\kappa}(X_\kappa)$
is of level $1$ (with the nodes $B_{\kappa,j}$ being
joints), and then we will find isomorphisms
$\Tr_{S_\kappa}(X_\kappa) \cong \Tr_{S_\kappa}(X_{\kappa'})$ respecting the holes.
This implies that $\kappa \mapsto \Tr_{S_\kappa}(X_\kappa)$ is parametrized
of level $1$.

To prove that $\Tr_{S_\kappa}(X_\kappa)$ is of level $1$,
it is enough to show that any ball $B \subset S_\kappa$ of radius
$\lambda + 2\delta$ lies either completely inside or
completely outside of $X_\kappa$.
So suppose $x \in X_\kappa \cap S_\kappa$. Then $x \in C_{\kappa,i}$ for some $i \in I$,
and our choice of holes ensures that $v(x-c_{\kappa,i}) < \lambda + \delta$.
Lemma~\ref{lem:cellEasy} (1) implies that $C_{\kappa,i}$ (and therefore $X_\kappa$)
contains $\ball{x, \lambda + 2\delta}$.

To get the isomorphisms $\Tr_{S_\kappa}(X_\kappa) \to \Tr_{S_{\kappa'}}(X_{\kappa'})$ we
first replace (for each $\kappa$) $X_\kappa$ by a set $Y_\kappa$
which has the same tree on $S_\kappa$, but which is simpler inside the holes.
We ensure that $\Tr(Y_\kappa)$ contains the nodes $B_{\kappa,j}$, $j \in J$,
so that $\Tr_{S_\kappa}(Y_\kappa) \subset \Tr(Y_\kappa)$.
Then we will use
Lemma~\ref{lem:treeIso} to construct an isomorphism
$\Tr(Y_\kappa) \to \Tr(Y_{\kappa'})$ sending $B_{\kappa,j}$ to $B_{\kappa',j}$;
this yields the desired isomorphism
$\Tr_{S_\kappa}(X_\kappa) = \Tr_{S_\kappa}(Y_\kappa) \iso \Tr_{S_{\kappa'}}(Y_{\kappa'})
= \Tr_{S_\kappa}(X_{\kappa'})$.

Define
$Y_{\kappa} := (X_{\kappa} \cap S_{\kappa}) \cup \{c_{\kappa,j} \mid j \in J\}$.
It is clear that $\Tr_{S_\kappa}(X_\kappa) \cong \Tr_{S_{\kappa}}(Y_{\kappa})$,
and the element $c_{\kappa,j}$ ensures that $B_{\kappa,j}$ is a node of
$\Tr(Y_\kappa)$.
It remains to define the bijective isometry $\phi\colon Y_{\kappa} \to Y_{\kappa'}$
needed in Lemma~\ref{lem:treeIso}. To this end, let us first adapt our
cell decomposition to the sets $Y_\kappa$: define
\[
D_{\kappa,i} := C_{\kappa,i} \ohne \bigcup_{j \in J} B_{\kappa,j}
.
\]
Thus $X_\kappa \cap S_\kappa = \Zp \cap \bigcup_{i\in I}D_{\kappa,i}$.
Our choice of $J$ ensures that $D_{\kappa,i} = C_{\kappa,i} \ohne B_{\kappa,i}$
if $c_{\kappa,i} \in \Zp$ and $D_{\kappa,i} = C_{\kappa,i}$ otherwise,
so $D_{\kappa,i}$ is a cell again, 
and moreover $x \in \Zp \cap D_{\kappa,i}$ implies
$v(x - c_{\kappa,i}) < \lambda + \delta$.

% Indeed: if $c_{\kappa,i} \notin \Zp$, then there is nothing to show.
% Otherwise, we claim $D_{\kappa,i} = C_{\kappa,i} \ohne B_{\kappa,i}$
% (which implies that $D_{\kappa,i}$ is a cell).
% We have ``$\supset$'' as any hole $B_{\kappa,j}$ ($j \in J$) not containing $c_{\kappa,i}$
% is disjoint from $C_{\kappa,i}$, and we have ``$\subset$'' as if $B_{\kappa,i}$
% is not a hole, then by definition $C_{\kappa,i} \cap B_{\kappa,i} = \emptyset$.

Next, we claim that the map $x \mapsto x - c_{\kappa,i} + c_{\kappa',i}$
induces a bijection from $D_{\kappa,i} \cap \Zp$ to $D_{\kappa',i} \cap \Zp$.
If $c_{\kappa,i} \notin \Zp$, then this has already been verified in \req{cAussen}.
Otherwise, it follows from the fact that the bounds of $D_{\kappa,i}$ are either
independent of $\kappa$ or less than $0$.
Using this, we define the bijection $\phi\colon Y_{\kappa} \to Y_{\kappa'}$ by
$\phi(x) := x - c_{\kappa,i} + c_{\kappa',i}$ if $x \in D_{\kappa,i} \cap \Zp$,
$i \in I$ and $\phi(c_{\kappa,i}) = c_{\kappa',i}$ if $i \in J$.
% These new cells $D_{\kappa,i}$ have the advantage that for
% two different $\kappa, \kappa' \in M$,
% $D_{\kappa',i}$ is only a shift of $D_{\kappa,i}$ by
% $c_{\kappa',i} - c_{\kappa,i}$: if $c_{\kappa,i} \in \Zp$, we ensured
% right now that the upper bound of $D_{\kappa,i}$ does not depend on $\kappa$;
% and this was the only case of a bound of $C_{\kappa,i}$ which could still
% depend on $\kappa$.
% Together with the fact that $c_{\kappa',i} - c_{\kappa,i} \in \Zp$,
% we get that 
% $D_{\kappa',i} \cap \Zp$ is a shift of $D_{\kappa,i} \cap \Zp$.
% This allows us to define the bijection $\phi\colon Y_{\kappa} \to Y_{\kappa'}$
% as follows:
% if $x \in D_{\kappa,i} \cap \Zp$ for $i \in I$, then set 
% $\phi(x) := x - c_{\kappa,i} + c_{\kappa',i}$; if $x = c_{\kappa,j}$
% for $j \in J$, then set $\phi(x) := c_{\kappa',j}$.
It remains to verify that $\phi$ is isometric,
i.e.\ that $v(x_1 - x_2) = v(\phi(x_1) - \phi(x_2))$
for any $x_1, x_2 \in Y_\kappa$.

Suppose $x_1, x_2 \in Y_\kappa$ are given.
Let $i \in I$ be such that $x_1 \in D_{\kappa,i}$ or
$i \in J$ such that $x_1 = c_{\kappa,i}$. Choose $j$ analogously for $x_2$.
Then $\phi(x_1) - \phi(x_2) = x_1 - c_{\kappa,i} + c_{\kappa',i} - x_2 + c_{\kappa,j} - c_{\kappa',j}
= x_1 - x_2 - (c_{\kappa,i} - c_{\kappa,j}) + (c_{\kappa',i} - c_{\kappa',j})$, so it is enough to
show that
\begin{equation}\label{eq:isometrie}
v(x_1 - x_2) < v((c_{\kappa,i} - c_{\kappa,j}) - (c_{\kappa',i} - c_{\kappa',j}))
.
\end{equation}

We may suppose $i \ne j$; otherwise, this is trivial.
Now recall that
$c_{\kappa,i} - c_{\kappa,j}$ is $\delta$-uniform in $\kappa$ and
that $v(c_{\kappa,i} - c_{\kappa,j})$ is involved.
Suppose first that 
$v(c_{\kappa,i} - c_{\kappa,j})$ is constant.
Then we get $c_{\kappa,i} - c_{\kappa,j} \etwa
c_{\kappa',i} - c_{\kappa',j}$, so the right hand side
of \req{isometrie} is at least $v(c_{\kappa,i} - c_{\kappa,j}) +
\delta$.
If $x_1 = c_{\kappa,i}$ and $x_2 = c_{\kappa,j}$, then this implies \req{isometrie}
trivially.
If $x_1 \in D_{\kappa,i}$ and $x_2 = c_{\kappa,j}$, then apply
Lemma~\ref{lem:cellEasy} (1).
If $x_1 \in D_{\kappa,i}$ and $x_2 \in D_{\kappa,j}$, then apply
Lemma~\ref{lem:cellEasy} (2).

If $v(c_{\kappa,i} - c_{\kappa,j})$ is not constant, then
by \req{ifuSort}
both $c_{\kappa,i} - c_{\kappa,j}$ and $c_{\kappa',i} - c_{\kappa',j}$
have valuation at least $\lambda + 2\delta$, so we have to check
$v(x_1 - x_2) < \lambda + 2\delta$.
If $x_1 = c_{\kappa,i}$, then this
follows from $x_2 \notin B_{\kappa,i}$.
If $x_1 \in D_{\kappa,i}$, then $x_1 \notin B_{\kappa,i}$,
i.e.\ $v(x_1 - c_{\kappa,i}) < \lambda + \delta$, and the claim follows from
Lemma~\ref{lem:cellEasy} (1).
\end{proof}

\subsection{Proof for definable subsets of %
\texorpdfstring{$\Qp^2$}{\041\032p\texttwosuperior}}
\label{subsect:proofZp2}

We are now ready to prove that
if $X$ is a definable subset of $\Qp^{2}$, then the tree of
$X$ is of level $2$. To finish the proof of
Theorem~\ref{thm:mainZp2} we moreover need that
if $\dim X \le 1$, then the tree is of level $\dim X$;
this is included in Theorem~\ref{thm:main1dim}, which
we will prove in the next subsection.

\begin{proof}[Proof for two-dimensional subsets of $\Qp^2$]

Suppose that $X \subset \Qp^{2}$ is definable. Our goal
is to prove that $\Tr(X)$ is a tree of level $2$.
We use Lemma~\ref{lem:compact}, i.e.\ it is enough to show that for
any $(x_0,y_0) \in \Zp^2$ and for sufficiently large $\lambda, \mu, \rho$,
the trees on the corresponding garlands are piecewise of level $1$.
We suppose without loss $(x_0,y_0) = (0,0)$.

For the remainder of the proof fix a garland $G$ for $(0, 0), \lambda, \mu, \rho$.
At several places, we will suppose $\lambda, \mu, \rho$ to be sufficiently large;
of course the meaning of ``sufficient'' must not depend
on $G$ (as augmenting $\mu$ and $\rho$ augments the number of garlands).
Indeed, $\lambda, \mu, \rho$ will
only depend on two cell decompositions of $X$: a normal one and one
with coordinates exchanged.

For $\kappa \in M := M(G)$, let $G_{\kappa}$ be the corresponding component of $G$.
Recall that $G_{\kappa} = \ball{p^\kappa\cdot (x_G,y_G), \kappa + \mu}$
for some $(x_G,y_G) \in \Zp^2$ with $v(x_G,y_G) = 0$.
We may suppose $v(x_G) = 0$; otherwise, exchange coordinates.

Denote by $H$ the projection of $G$ onto the first coordinate and
by $H_{\kappa} = \ball{p^\kappa x_G, \kappa + \mu}$ the projections of the components $G_\kappa$.
As $v(x_G) = 0$, $H$ is a garland with components $H_\kappa$.
Denote by $B_\kappa = \ball{p^\kappa y_G, \kappa + \mu}$ the projection of
$G_\kappa$ onto the second coordinate.
For $x \in H$, let $X_x := \{y\in \Qp \mid (x,y) \in X\}$ be
the fiber of $X$ at $x$.

Our goal is to compute $\Tr_G(X)$.
We will verify that Corollary~\ref{cor:contTree} can be applied to
each set $G_\kappa \cap X$, yielding that $\Tr_{G_{\kappa}}(X)$ is
isomorphic to $\Tr(\Zp) \times \Tr_{B_\kappa}(X_{x_\kappa})$, where
$x_{\kappa} := p^\kappa x_G \in H_\kappa$. We will moreover
verify that Corollary~\ref{cor:1dimParam} can be applied to the sets
$X_{x_\kappa}$ and the balls $B_\kappa$ (where $\kappa$ runs through $M$).
This implies that the map $\kappa \mapsto \Tr_{B_\kappa}(X_{x_\kappa})$
is piecewise a level $1$ tree. Thus $T_G(X)$
satisfies the prerequisites of Lemma~\ref{lem:compact}, and we are done.

\medskip

Before we attack the prerequisites of the two corollaries, let us
have a closer look at the set $X$ and fix some notation.
Choose a cell decomposition
such that $X$ is the union of cells. We may
suppose that the exponents of all cells are equal to one
single $e_0 \in \bbN$. Fix once and for all $\delta := \delta(e_0)$ as in
Lemma~\ref{lem:cellEasy}.
By Lemma~\ref{lem:1dimLokal}, we may suppose that $H$ is contained in
one single base cell $D_0 \subset \Qp$.

In the remainder of the proof, $C$ will be a cell contained in $X$ and having
base $D_0$; we will denote its bounds and center by
$\alpha$, $\beta$ and $c$, respectively, and its fiber at $x \in H$
by $C_x$. For any $x \in H$, these fibers $C_x$ form a cell decomposition of $X_x$.
Occasionally we will need a second cell $C'$ (also contained in $X$ and having
base $D_0$), with bounds, center and fiber $\alpha'$, $\beta'$, $c'$
and $C'_x$.

We use Proposition~\ref{prop:defPuiseux} and Corollary~\ref{cor:defPuiseux}
to control $\alpha$, $\beta$ and $c$:
for $\lambda, \mu, \rho$ sufficiently large, the bounds $\alpha(x)$ and $\beta(x)$
only depend on $\kappa = v(x)$,
and this dependence is linear. Moreover, we can choose an $e$-th root
on $H$ and write the center as a convergent series
\[
c(x) = \sum_{i \in \bbZ} c_i \sqrt[e]{x}^i
,
\]
where $c_i = 0$ for $i \ll 0$, and
where $c_i$ may lie in $\aclQp$, but $c_i \sqrt[e]{x}^i \in \Qp$ for
any $x \in H$ and any $i \in \bbZ$ by Lemma~\ref{lem:series} \rit{sQp}.
We may suppose that $e$ does not depend on the cell $C$; otherwise,
take the least common multiple of all $e$.
For the remainder of the proof, we keep an $e$-th root on $H$ fixed.

Let $\iota$ be minimal such that $c_\iota \ne 0$ in the above series.
By further enlarging $\lambda$,
we may suppose $c(x) \etwa c_{\iota}\sqrt[e]{x}^\iota$ for all $x \in H$.
The same argument also applies to $f(x) := c(x) - c'(x)$
and to $f(x) := c(x) - \frac{y_G}{x_G}x$: we may assume that
for each of the (finitely many) functions $f$ mentioned here, there exist
$a \in \aclQp$ and $\iota \in \bbZ$ such that
$f(x) \etwa a\sqrt[e]{x}^\iota \in \Qp$ for all $x \in H$.

\medskip

We now verify the prerequisites of Corollary~\ref{cor:1dimParam},
i.e.\ we have to verify that the cell decomposition $C_{x_\kappa}$
of $X_{x_\kappa}$ satisfies the uniformness properties in $\kappa$.
It is clear that only the bounds and the centers depend on $\kappa$,
and we already ensured that the bounds are linear in $\kappa$.
It remains to verify that the functions
$\kappa \mapsto c(x_\kappa) - c'(x_\kappa)$ and
$\kappa \mapsto c(x_\kappa) - p^{\kappa}y_G$ are $\delta$-uniform.

Choose $a \in \aclQp$ and $\iota \in \bbZ$ such that
$c(x_\kappa) - c'(x_\kappa) \etwa a\sqrt[e]{x_\kappa}^\iota = a\sqrt[e]{p^\kappa x_G}^\iota$
and fix any $\kappa_0 \in M$. Then we can write any $\kappa \in M$
as $\kappa = \kappa_0 + e\nu$ for some $\nu \in \Gamma$. By uniformity
of the choice of roots on $H$, we have
$a\sqrt[e]{p^\kappa x_G}^\iota = p^{\iota \nu}a\sqrt[e]{p^{\kappa_0}x_G}^\iota$.
As only $\nu$ depends on $\kappa$, this yields $\delta$-uniformity of
$c(x_\kappa) - c'(x_\kappa)$.
The same argument applies to
$c(x_\kappa) - p^{\kappa}y_G = c(x_\kappa) - \frac{y_G}{x_G}x_\kappa
\etwa a\sqrt[e]{x_\kappa}^\iota$.
 
\medskip

The last remaining task is the
verification of the prerequisites of Corollary~\ref{cor:contTree}.
Fix $\kappa \in M$ and suppose we are given $x_1, x_2 \in H_\kappa$.
We have to find a bijective isometry
$\phi\colon X_{x_1} \cap B_\kappa \to X_{x_2} \cap B_\kappa$
satisfying $v(\phi(y) - y) \ge v(x_2 - x_1)$. We will define
$\phi$ on each cell $C_{x_1}$ separately.
However, first we have to get rid of some cells:
we claim that we can suppose
\begin{equation}\label{eq:centerFlat}
v(c(x)) \ge v(x)
\end{equation}
for all $x \in H$.

As $c(x) \etwa a\sqrt[e]{x}^\iota$ for some $a \in \Qp, \iota \in \bbZ$,
we may enlarge $\lambda$ such that \req{centerFlat} either holds for all
$x \in H$ or for no $x \in H$. Suppose that it does not hold.
We prove that then $C \cap G_\kappa$ is either empty or equal to $G_{\kappa}$
(i.e.\ either we may ignore $C$ or $T_{G_\kappa}(X)$ is trivial).
We have to check that for $(x_1, y_1), (x_2, y_2) \in G_{\kappa}$,
$y_1 \in C_{x_1}$ if and only if $y_2 \in C_{x_2}$.
The cell $C_{x_2}$ is just a shift of $C_{x_1}$
(the bounds $\alpha$ and $\beta$ only depend on $\kappa$),
so in view of Lemma~\ref{lem:cellEasy} (1)
it is enough to verify $y_1 - c(x_1) \etwa y_2 - c(x_2)$. But indeed,
we have
$v(c(x_1)) < \kappa \le v(y_1)$, so $v(y_1 - c(x_1)) = v(c(x_1)) < \kappa$,
and the claim follows from
$v(y_1 - y_2) \ge \kappa + \delta$ (which is true if we choose $\mu \ge
\delta$)
and $c(x_1) \etwa a\sqrt[e]{x_1}^\iota \etwa a\sqrt[e]{x_2}^\iota \etwa
c(x_2)$ (which follows from Lemma~\ref{lem:chooseRoot} if we choose $\mu \ge \delta + v(e)$).

Now let us define $\phi$. For $y \in X_{x_1}$, let $C$ be the cell such that
$y \in C_{x_1}$ and set $\phi(y) := y - c(x_1) + c(x_2)$. It is clear that
this defines a bijection $X_{x_1} \to X_{x_2}$, and it remains to verify
that $\phi$ is an isometry, restricts to a bijection
$X_{x_1} \cap B_\kappa \to X_{x_2} \cap B_\kappa$ and satisfies
\begin{equation}\label{eq:stetiso}
v(\phi(y) - y) \ge v(x_2 - x_1)
.
\end{equation}

Restricting to $B_\kappa$ is in fact a special case of
Equation~\req{stetiso}, as $B_\kappa$ is a ball of radius $\kappa+\mu
\le v(x_2 - x_1)$. By \req{centerFlat}, we may apply Lemma~\ref{lem:series} \rit{sflat},
which (after enlarging $\lambda$) implies \req{stetiso} using
$\phi(y) - y = c(x_2) - c(x_1)$.

To check that $\phi$ is an isomerty, suppose $y \in C_{x_1}$ and
$y' \in C'_{x_1}$. If $C = C'$, then $\phi(y') - \phi(y) = y' - y$,
so there is nothing to do. Otherwise we have
$v(\phi(y') - \phi(y)) = v(y' - c'(x_1) + c'(x_2) - y + c(x_1) - c(x_2))$,
so it is enough to
check
\begin{equation}\label{eq:isomY}
v(y' - y) < v\big((c'(x_1) - c(x_1)) - (c'(x_2) - c(x_2))\big)
.
\end{equation}

We have $c'(x_1) - c(x_1) \etwa a\sqrt[e]{x_1}^\iota$
and $c'(x_2) - c(x_2) \etwa a\sqrt[e]{x_2}^\iota$
for suitable $a$ and $\iota$.
Choosing $\mu \ge \delta + v(e)$ yields
$\sqrt[e]{x_1}^\iota \etwa \sqrt[e]{x_2}^\iota$, so
$c'(x_1) - c(x_1) \etwa c'(x_2) - c(x_2)$, i.e.\ the right hand side
of Equation~\req{isomY} is at least $v(c'(x_1) - c(x_1)) + \delta$.
But $y$ and $y'$ are contained in two disjoint cells, so
Lemma~\ref{lem:cellEasy} (2) yields $v(y' - y) < v(c'(x_1) - c(x_1)) + \delta$.
This proves isometry and finishes the proof of the theorem.
\end{proof}

\subsection{Proof for $1$-dimensional definable sets}
\label{subsect:proof1dim}

The proof of the conjecture for $1$-dimensional definable sets is in
many aspects just a simplification of the proof for subsets of $\Qp^2$,
so we will be less detailed. A level $0$ version of
Proposition~\ref{prop:1dimParam} will be build directly into the proof.

\begin{proof}[Proof of Theorem~\ref{thm:main1dim}]
If $X \subset \Qp^n$ is $0$-dimensional, then it is finite, so it is clear that
$\Tr(X)$ is a tree of level $0$.
Now let $X \subset \Qp^n$ be $1$-dimensional definable.
In this proof, we will view $\Qp^n$ as $\Qp \times \Qp^{n-1}$
and write elements as $(x, \ytup)$; all underlined variables
will be $(n-1)$-tuples.

By Lemma~\ref{lem:compact}, it is enough to show that for
any $(x_0,\ytup_{0}) \in \Zp^n$ and for sufficiently large $\lambda, \mu, \rho$,
the trees on corresponding garlands are of level $0$.
Without loss suppose $(x_0,\ytup_{0}) = 0$.
Again we fix a corresponding garland $G$
with components $G_{\kappa} = \ball{p^\kappa\cdot (x_G, \ytup_{G}), \kappa + \mu}$
for some $(x_G, \ytup_{G}) \in \Zp^n$ with $v(x_G, \ytup_{G}) = 0$.
By permuting coordinates, we may suppose $v(x_G) = 0$.

We use the same notation as in the proof for subsets of $\Qp^2$:
$H$ and $H_\kappa$ are the projections of $G$ and $G_\kappa$
onto the first coordinate,
$B_\kappa = \ball{p^\kappa \ytup_{G}, \kappa + \mu}$ is the projection of
$G_\kappa$ onto the remaining coordinates, and
for $x \in H$, $X_x := \{\ytup\in \Qp^{n-1} \mid (x,\ytup)
\in X\}$ the fiber of $X$ at $x$.
Again $H$ is a garland with components $H_\kappa$.

We will again apply Corollary~\ref{cor:contTree} to the sets
$G_\kappa \cap X$ to get $\Tr_{G_\kappa}(X) \cong \Tr(\Zp) \times \Tr_{B_\kappa}(X_{x_\kappa})$,
where $x_\kappa := p^\kappa x_G$. Moreover, we will show that
$\kappa \mapsto \Tr_{B_\kappa}(X_{x_\kappa})$ is piecewise
of level $0$; then the theorem follows.

Choose a cell decomposition of $\Qp^n$ such that $X$ is the union of cells, and
suppose that $C$ is a ``relevant'' cell, i.e.\ contained in $X$ and
intersecting $G$. Denote by $D_0 \subset \Qp$ the ``final base'' of $C$, i.e.\ iterate
taking the base $n-1$ times. We may suppose
$H\subset D_0$, so all relevant cells have the same final base $D_0$, and
moreover $\dim D_0 = 1$.

As $C$ is $1$-dimensional, it is the graph of a definable
function $\ctup \colon D_0 \to \Qp^{n-1}$. In this proof, by
the ``center'' of $C$ we shall mean this function $\ctup$.
By Proposition~\ref{prop:defPuiseux}, we may enlarge $\lambda,\mu,\rho$,
choose an $e$-th root on $H$ and then write the center as
\begin{equation}\label{eq:ctupseries}
\ctup(x) = \sum_{i\in \bbZ} \ctup_i\sqrt[e]{x}^i
.
\end{equation}
As $v(x - p^\kappa x_G) \ge \kappa + \mu$ for $x \in H_\kappa$, we have
$B_\kappa = \ball{p^\kappa x_G\frac{\ytup_{G}}{x_G}, \kappa + \mu}
= \ball{x\frac{\ytup_{G}}{x_G}, \kappa + \mu}$, so $\ctup(x) \in B_\kappa$
if and only if $v\big(\ctup(x) - x\frac{\ytup_G}{x_G}\big) \ge \kappa + \mu$.
Using \req{ctupseries}, this does not depend on $x$ if $\kappa \gg 0$,
so after enlarging $\lambda$ and removing irrelevant cells, we have
$\ctup(x) \in B_\kappa$ for all $x \in H_\kappa$ and all $\kappa \in M$.

Let $\ctup'$ be the center of a second cell $C'$.
By Corollary~\ref{cor:defPuiseux} we may suppose that
$v(\ctup(x) - \ctup'(x))$
only depends on $\kappa = v(x)$ and is linear
in $\kappa$. Let us call the induced functions
$v(x) \mapsto v(\ctup(x) - \ctup'(x))$
the ``involved functions''.
%Moreover, call the function
%$\kappa \mapsto \kappa + \mu$ involved.

To show that $M \to \Trees, \kappa \mapsto \Tr_{B_\kappa}(X_{x_\kappa})$
is piecewise of level $0$, we partition $M$ into definable pieces $M'$
in such a way that for any two involved functions $\ifu_1, \ifu_2$,
the truth values of $\ifu_1 \lesseqqgtr \ifu_2$ are constant
on each piece $M'$.
The tree $\Tr_{B_\kappa}(X_{x_\kappa})$
has one infinite path for each
center $\ctup(x_\kappa)$, and the depths of the bifurcations are given by
$v(\ctup(x_G) - \ctup'(x_G))$. The partition of $M$ ensures that
the overall structure of $\Tr_{B_\kappa}(X_{x_\kappa})$ is constant
on each piece $M'$, and linearity of the involved functions yields
linearity of the lengths of the bones on each piece.

It remains to verify the prerequisites of Corollary~\ref{cor:contTree}.
For $\kappa \in M$ and $x_1, x_2 \in H_\kappa$, we use the bijection
$\phi\colon X_{x_1} \cap B_\kappa \to X_{x_2} \cap B_\kappa$
sending $\ctup(x_1)$ to $\ctup(x_2)$.
This is an isometry as $x \mapsto v(\ctup(x) - \ctup'(x))$ is
constant on $H_\kappa$. To get
$v(\ctup(x_2) - \ctup(x_1)) \ge v(x_2 - x_1)$ we apply
Lemma~\ref{lem:series} \rit{sflat} to each coordinate of $\ctup$;
the prerequisite $v(\ctup(x)) \ge v(x)$ follows from
$\ctup(x) \in B_\kappa$.
\end{proof}

\section{Possible generalizations}
\label{sect:open}

%\texorpdfstring{$\Tr(\Zp^n)$}{T(\041\044p\040\177)}
\subsection{Skeletal cell decompositions of trees}

The main conjecture can be generalized to a kind
of cell decomposition of trees in the following sense.
Consider $\Tr(\Zp^n)$ as an imaginary sort of our language:
\[
\raisebox{1.5ex}{$\Tr(\Zp^n) = (\Zp^n \times \Gamma)$} \!\!\bigg/
  \begin{aligned}
  &(\xtup, \lambda) = (\xtup', \lambda)\\[-0.5ex]
  &\text{if } v(\xtup - \xtup') \ge \lambda
  .
  \end{aligned}
\]
Then for any definable set $X\subset \Zp^n$, $\Tr(X)$ is a
definable subset of $\Tr(\Zp^n)$. Suppose we have an
isomorphism between $\Tr(X)$ and a tree constructed
out of a level $d$ tree datum; I will call this an \emph{iterated skeleton}
for $\Tr(X)$. Now let us add more branches to this iterated
skeleton in such a way that afterwards each node has exactly
$p^n$ children: enlarge the finite trees $\fintree$ at the beginning
of side branches, and add side branches to the iterated side trees which
before were of level $0$. The result is an iterated skeleton of level $n$ for
$\Tr(\Zp^n)$ which
is, in a certain sense, compatible to $\Tr(X)$. It seems plausible
that such a compatible iterated skeleton of $\Tr(\Zp^n)$
should exist for arbitrary definable sets $Y \subset \Tr(\Zp^n)$.
Let me make this more precise.

Let $D$ be a tree datum and let $\tree$ be the tree constructed
out of $D$.
Suppose that $\fintree$ is the finite tree appearing
in a side branch datum of $D$---either for
side branches of $\tree$ itself, or for side branches
of an (iterated) side tree. Suppose moreover that $w$ is a
node of $\fintree$. Then we define the set $C_{\fintree, w} \subset \tree$
of ``nodes coming from $w$''.
We would like to say that every node of $\tree$ lies in
exactly one set $C_{\fintree, w}$; to achieve this, we slightly
modify some definitions.

The only nodes of $\tree$ which
are not part of any set $C_{\fintree, w}$ are the ones on
side trees of level $0$. (Nodes on skeletons of trees of higher level
are roots of side branches.) Thus
we define a side branch of level $-1$ to be a finite tree $\fintree$
consisting only of a root,
and we let a tree of level $0$ be one with side branches of level $-1$
(as in Subsection~\ref{subsect:poincare}).
Now some nodes of $\tree$ appear in two sets $C_{\fintree, w}$:
if $w$ is a leaf of $\fintree$ and $\fintree$ belongs to a side
branch of level $\ge 0$, then the corresponding nodes of $\tree$
also appear as root of the first side branch of the side tree
attached to $w$; thus we forbid to take for $w$ a leaf of $\fintree$
unless $\fintree$ is a side branch of level $-1$.

In this way, an iterated skeleton of a tree $\tree$ yields
a partition of its nodes; let us call such a partition a
\emph{skeletal cell decomposition} of $\tree$, and let us call the sets
$C_{\fintree, w}$ \emph{skeletal cells}.
Now we can formulate a cell decomposition version of Conjecture~\ref{conj:main}:

\begin{conj}\label{conj:skelCell}
Suppose $Y \subset \Tr(\Zp^n)$ is definable. Then there exists
a skeletal cell decomposition of $\Tr(\Zp^n)$ such that $Y$
is a union of skeletal cells.
\end{conj}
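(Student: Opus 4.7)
The plan is to reduce Conjecture~\ref{conj:skelCell} to a parametric strengthening of Conjecture~\ref{conj:main}, and then to upgrade the resulting tree datum for $Y$ into a full skeletal decomposition of the ambient $\Tr(\Zp^n)$.

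First I would lift $Y$ to its underlying definable set $\tilde Y \subset \Zp^n \times \Gamma_{\ge 0}$; by definability of the equivalence relation defining the imaginary sort, $\tilde Y$ is saturated under it, so for each fixed depth $\lambda$ the slice $\tilde Y_\lambda \subset \Zp^n$ is a definable union of balls of radius $\lambda$. The information encoded in $Y$ is equivalent to the family of trees of the $\tilde Y_\lambda$, truncated at depth $\lambda$ and glued coherently as $\lambda$ varies. The skeletal-cell question then becomes the question whether this glued family admits a uniform description as a sub-datum of some level $n$ tree datum for $\Tr(\Zp^n)$.

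Next I would prove a parametrized strengthening of Conjecture~\ref{conj:main}: for a definable family $(X_{\kappatup})_{\kappatup \in M}$ with $M \subset \Gamma^m$ and $\dim X_{\kappatup} \le d$ uniformly, the assignment $\kappatup \mapsto \Tr(X_{\kappatup})$ is piecewise a parametrized tree of level $d$. The tools of Section~\ref{sect:proofs}---cell decomposition over $\Qp$, Puiseux expansions on sufficiently fine garlands, Lipschitz continuity of fibers, and Proposition~\ref{prop:1dimParam}---are already parametric in spirit and should absorb additional $\Gamma$-parameters without essential change, at least in the cases already treated ($n \le 2$ or $d \le 1$). Applying this to the family above gives a level $n$ datum whose cells pick out the nodes of $Y$.

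Finally one has to refine the resulting (sub-)tree datum into a genuine skeletal cell decomposition of $\Tr(\Zp^n)$: at each side-branch datum appearing in the description of $Y$ one extends the finite tree $\fintree$, and recursively introduces level-$(d-1)$ side trees in the complementary positions, so that every node of $\Tr(\Zp^n)$ is reached by exactly one path through the datum. The main obstacle, in my view, lies precisely in this reconciliation step: two disjoint definable subsets of $\Tr(\Zp^n)$ generally demand different skeleta (different $\rho$, different linear bone-length functions, incomparable joint depths), so producing a single datum which simultaneously exhibits $Y$ and its complement as unions of cells requires a common-refinement result for tree data strictly stronger than Lemma~\ref{lem:genTreeParam}. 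Establishing such a refinement and marrying it to the Puiseux-based analysis on garlands seems to require genuinely new ideas, and is the place where a proof is most likely to stall.
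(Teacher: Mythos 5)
The statement you are trying to prove is Conjecture~\ref{conj:skelCell}, which the paper leaves open: it appears in Section~\ref{sect:open} on possible generalizations with no proof and no proof sketch, so there is no argument in the paper to reconstruct or compare against.

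That said, your outline is a sensible account of how one might hope to attack the problem, and your own skepticism at the end is warranted. Two remarks. First, the reduction from a definable $Y \subset \Tr(\Zp^n)$ to the family of slices $\tilde{Y}_\lambda \subset \Zp^n$ loses the coherence information that makes $Y$ a subset of the tree rather than an arbitrary depth-indexed sequence of unions of balls; applying a parametrized Conjecture~\ref{conj:main} slicewise would produce uniform tree data for each $\Tr(\tilde{Y}_\lambda)$, not a single iterated skeleton of $\Tr(\Zp^n)$ exhibiting $Y$ as a union of skeletal cells, so that step would need to be formulated more carefully. Moreover, the parametrized version you invoke is only established in the paper for $n \le 2$ or $\dim \le 1$, so even this piece of the plan is conjectural in general. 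Second, and more importantly, you correctly pinpoint the reconciliation step---building one skeletal decomposition compatible with both $Y$ and its complement---as the real obstacle: Lemma~\ref{lem:genTreeParam} is much too weak, and no common-refinement result for tree data exists in the paper. The paper itself treats Conjecture~\ref{conj:skelCell} as aspirational and even observes that, as stated, it does not yet imply Conjecture~\ref{conj:main} without a further notion of dimension for definable subsets of $\Tr(\Zp^n)$. In short, your plan is plausible in outline, your self-assessment of where it stalls is accurate, and a complete proof would need substantially new machinery beyond what is developed here.
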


This conjecture does not yet imply Conjecture~\ref{conj:main};
one would like to have a notion of
dimension for definable subsets of
$\Tr(\Zp^n)$ and then improve the statement to something
like ``$Y$ is a union of skeletal cells of level at most $\dim Y$''.

In the introduction, we mentioned a variant $\Trtil(V)$
of the tree of a variety
$V$, where the set of nodes at depth $\lambda$ consists of the
whole set $V(\bbZ/p^\lambda\bbZ)$.
These trees are definable, so they also fall in the scope of
this version of the conjecture.
Note that as for Conjecture~\ref{conj:main}, this
directly implies rationality of the associated Poincar\'e series:
the proof that trees of level $d$ have rational Poincar\'e series
directly generalizes to unions of skeletal cells, if one defines
the Poincar\'e series of a subset $Y \subset \Tr(\Zp^n)$ by
\[
P_{Y}(\sv) := \sum_{\lambda = 0}^\infty \#\{v \in Y \mid \depth_{\Tr(\Zp^n)}(v) = \lambda\} \cdot \sv^\lambda
.
\]

\subsection{Trees over other Henselian fields}

If $K$ is any Henselian field,
then one can define the
tree of a definable subset of $K^n$ in an analogue way as over $\Qp$
(though one needs a generalized
notion of tree if the valuation group is not discrete).
One cannot expect to get a nice statement on such trees
if the model theory of $K$ is not understood,
but there are several cases in which it is understood
and where a variant of the main conjecture would be interesting:
algebraically closed valued fields and
Henselian fields of characteristic $(0, 0)$.
Moreover, if the model theory is not understood, one
may still hope for a conjecture concerning trees of varieties.

The reason I think algebraically closed fields are interesting is
that there, trees should be simpler, and one might hope to first
prove a version of the conjecture in this case, before going back
to non-algebraically closed fields. Indeed,
over $\Qp$, we had different side branches depending on
the depth modulo some $\rho$. The reason for this was that not all roots exist, so
this phenomenon should disappear over algebraically closed fields.

Concerning Henselian fields $K$ of characteristic
$(0, 0)$, a good version of the conjecture there should imply a
uniform version of the conjecture over $\Qp$ for almost all $p$,
which in turn should imply rationality of the Poincar\'e series
``uniformly in $p$'', probably in the same sense as
it has been proven in \cite{DL:def}.
Let me make this precise, describing the hopes I have in this case.

Over $\Qp$, our trees were purely combinatorial; if the residue
field is not finite, then most nodes will just have infinitely
many children, so there is not much combinatorial information left.
Thus it will be necessary to add some additional structure to the
trees; probably the set of children of a node (or the appropriate equivalent
if the value group is not discrete) should be a definable
set over the residue field.
A tree datum $D$ in this setting should contain
formulas $\chi(\ytup)$ in the ring language, which describe the
sets of children of some nodes; for any valued field $K$, one then gets
an actual tree $\tree_{D,K}$ by
interpreting the formulas $\chi(\ytup)$ in the residue field of $K$.

Now suppose
that for any Henselian field $K$ of characteristic $(0,0)$ and any
formula $\phi(\xtup)$ (with $\xtup$ in the valued field sort),
we do not only have a tree datum $D$ describing
$\Tr(\phi(K))$, but moreover we can say this in a first order way:
there is a sentence $\psi$ which holds in $K$ and such that
for any other valued field $K'$, $K' \models \psi$ implies
that $D$ describes $\Tr(\phi(K'))$.
Then for any given formula $\phi(\xtup)$, by compactness there is
a finite set $\dats$ of tree data such that for any $K$ Henselian of
characteristic $(0,0)$,
there is a $D \in \dats$ describing $\Tr(\phi(K))$.
If we restrict ourselves to fields with value group
(elementarily equivalent to) $\bbZ$, then by Ax-Kochen-Er\v sov
$D$ will only depend on the residue field. Thus
we may unify all $D \in \dats$ to one single tree datum
$D_0$ which is valid for all $K$ by
incorporating the choice of $D$ into the formula
describing the children of the root.
By applying this to ultraproducts
of the fields $\Qp$, we get that $D_0$ also describes $\Tr(\phi(\Qp))$
for almost all $p$.

%  A delicate matter is then the right choice
% of a notion of isomorphism of these enriched trees:
% the less isomorphisms, the more information one gets, but
% with too few of them, it might be impossible to understand the
% possible trees one can get.
% However, examples suggest that it might suffice to permit
% affine linear bijections at the level of the sets $S_v$.
% 
% 
% it might suffice
% to permit affine linear bijections at the level of the sets describing
% the 
% 
% 
% 
%  Then one has to define level $d$
% trees in this enriched language; in particular, one needs
% the right notion of isomorphism of enriched trees, and one needs an
% adequate substitute for the finite trees $\fintree$ at the beginning
% of the side branches.

\def\cprime{$'$} \providecommand{\herrmannHook}{}

\bigskip

{\footnotesize
\noindent
Immanuel Halupczok\\
DMA\\
Ecole Normale Sup\'erieure\\
45, rue d'Ulm\\
75230 Paris Cedex 05---France\\
\emph{E-mail:} \texttt{math@karimmi.de}

}

\end{document}